\documentclass[11pt,twoside]{amsart}
\usepackage{amssymb,amsmath}
\usepackage{amscd}
\usepackage{amsthm}
\usepackage{latexsym}
\usepackage[noadjust]{cite}
\usepackage{mathrsfs}
\usepackage{indentfirst}
\usepackage{graphicx}
\usepackage{subfigure}
\usepackage{color}
\usepackage{enumerate}
\usepackage{tikz}
\usepackage{pgfplots}
\numberwithin{equation}{section}
\pgfplotsset{compat=1.18}
\pgfplotsset{compat=1.18}
\usepgfplotslibrary{fillbetween}

\newtheorem{theorem}{Theorem}[section]
\newtheorem{lemma}[theorem]{Lemma}
\newtheorem{proposition}[theorem]{Proposition}

\theoremstyle{definition}
\newtheorem{definition}[theorem]{Definition}
\newtheorem{remark}[theorem]{Remark}

\usepackage[colorlinks,
linkcolor=blue,       %%修改此处为你想要的颜色
anchorcolor=red,  %%修改此处为你想要的颜色
citecolor=blue,        %%修改此处为你想要的颜色，例如修改blue为red
]{hyperref}
\makeatletter
\newcommand{\subsectionnotoc}{%
  \@startsection{subsection}{\@M}%
  \z@{.5\linespacing\@plus.7\linespacing}{-.5em}%
  {\normalfont\bfseries}}
\makeatother

\title[Pseudo-Relativistic Schr\"odinger Equations with Logarithm-Law]
{Ground states for pseudo-relativistic Schr\"odinger equations with logarithm-law: qualitative properties and nonrelativistic limit}

\author[P. Guan]{Pingkang Guan}
\address{School of Mathematics, Jilin University, Changchun 130012, PR CHINA}
\email{pingkangguan24@gmail.com}

\author[Y. Wei]{Yuanhong Wei}
\address{School of Mathematics \& International Institute for Mathematical Sciences, Jilin University, Changchun 130012, China}
\email{weiyuanhong@jlu.edu.cn}

\begin{document}

	\begin{abstract}
In this paper, we study the pseudo-relativistic  Schr\"odinger equation with logarithmic nonlinearity
\begin{equation*}
    \left(\sqrt{-c^{2}\Delta+m^{2}c^{4}}-mc^{2}\right)u+\mu u
=u\log|u|
\quad\text{in }\mathbb{R}^{N}.
\end{equation*}
The existence and qualitative properties of action ground states are established, by means of a power-law approximation. We then prove that these states converge to the Gausson in $H^{1}(\mathbb{R}^{N})$ as $c\to\infty$. For sufficiently large $c$, we further obtain uniqueness of action ground states up to translation and sign change, as well as their nondegeneracy. Finally, we prove an exact correspondence between action and energy ground states, thereby extending the preceding results to energy ground states at arbitrary prescribed mass.
	\end{abstract}
    \maketitle
    \keywords{Keywords: pseudo-relativistic Schr\"odinger equation; logarithm-law;
    ground states; nonrelativistic limit; uniqueness; nondegeneracy}\\
   \subjclass{Mathematics Subject Classification (2020): {35Q55, 35B40, 35R11}}

  \tableofcontents
	
\section{Introduction}
In this paper, we consider the pseudo-relativistic Schr\"odinger equation with logarithm-law
\begin{equation}\label{problem}\tag{$P$}
    \left(\sqrt{-c^2\Delta+m^2c^4}-mc^2\right)u+\mu u=u\log |u|\quad \text{in}~\mathbb{R}^N,
\end{equation}
where $N\geq2$, $\mu>0$, $c>0$ is the speed of light, and $m>0$ represents the particle mass. The pseudo-relativistic operator
\begin{equation*}
    \sqrt{-c^2\Delta+m^2c^4}
\end{equation*}
is defined as the Fourier multiplier with symbol
\begin{equation*}
   \sqrt{c^2|\xi|^2+m^2c^4}.
\end{equation*}
Problem \eqref{problem} arises naturally from the study of standing waves for the time-dependent pseudo-relativistic
logarithmic Schr\"odinger equation
\begin{equation}\label{timedependent}
    i\partial_t\psi=\left(\sqrt{-c^2\Delta+m^2c^4}-mc^2\right)\psi-\psi\log|\psi|\quad\text{in }\mathbb{R}\times\mathbb{R}^N.
\end{equation}
From a physical perspective, the evolution equation \eqref{timedependent} describes relativistic spin-0 bosons with a logarithmic self-interaction.
Substituting the standing-wave ansatz
\begin{equation*}
    \psi(t,x)=e^{i\mu t}u(x)
\end{equation*}
into \eqref{timedependent} gives \eqref{problem}, where $\mu$ is the prescribed temporal frequency of the standing wave. 

The dynamical evolution conserves both mass and Hamiltonian energy, defined as
\[
    \mathcal{M}(u)=|u|_2^2
    \]
and   \[
  \mathcal{H}_c(u)=\frac{1}{2}\int_{\mathbb{R}^N} \left(
        \sqrt{c^2|\xi|^2+m^2c^4}-mc^2
    \right)|\widehat u(\xi)|^2\,d\xi
    +\frac{1}{4}\int_{\mathbb{R}^N}u^2\,dx-
    \frac{1}{2}\int_{\mathbb R^N}u^2\log |u|\,dx,
\]
respectively. 
These two conserved quantities give rise to two distinct variational definitions of ground states.  If the frequency $\lambda\in\mathbb{R}$ is prescribed, one considers the action
\begin{equation}\label{definition-Scrho}
    \mathcal{S}_{c,\lambda}(u)=\mathcal{H}_c(u)+\frac{\lambda}{2}\mathcal{M}(u).
\end{equation}
An {\sl action ground state} at frequency $\lambda$ is a nontrivial solution that minimizes this action among all nontrivial solutions. Alternatively, if the mass $\rho>0$ is prescribed, one considers
 \begin{equation}\label{energygroundstatelevel}
      e_c(\rho)
      =
      \inf
      \left\{
       \mathcal{H}_c(u):
       u\in\mathcal{X},\ 
       \mathcal{M}(u)=\rho
      \right\},
  \end{equation}
where 
\begin{equation*}
	\mathcal{X}=\left\{u\in H^{1/2}(\mathbb{R}^N):\int_{\mathbb{R}^N}u^2|\log |u||\,dx<\infty\right\}.
\end{equation*}
A function $u\in\mathcal{X}$ with $\mathcal{M}(u)=\rho$ is called an {\sl energy ground state} of mass $\rho$ if it satisfies $\mathcal{H}_c(u)=e_c(\rho)$.  In this constrained problem the frequency is not fixed beforehand: it appears as the Lagrange multiplier associated with the mass constraint.

The pseudo-relativistic Schr\"odinger equation with logarithm-law   exhibits a number of distinctions relative to the classical Schr\"odinger equation. One notable feature is that the logarithmic term brings scaling property and opens up new perspectives for investigation. For general nonlinear Schrödinger equations without a logarithmic term, action ground states need not coincide with energy ground states. A systematic  treatment of this distinction was provided by
Dovetta, Serra and Tilli \cite{DovettaSerraTilli2023}.
The logarithmic nonlinearity, however,
possesses an extra scaling property 
 \begin{equation*}
      \log|\alpha u|
      =
      \log\alpha+\log|u|,
     \quad \alpha>0, 
  \end{equation*}
which changes the situation. Multiplying a solution by a positive constant therefore only  changes its frequency. This amplitude-frequency relation yields an exact correspondence between action ground states and energy ground states for every prescribed mass. Its precise variational formulation was made explicit by d'Avenia, Montefusco and Squassina \cite{dAvenia2014}, who established a correspondence between critical levels on the Nehari manifold and those on prescribed $L^2$-spheres.

It is of particular interest to explore the relationship between the pseudo‑relativistic Schr\"odinger equation and its classical counterpart by characterizing the limiting behaviour as the speed of light $c$ becomes sufficiently large. 
The operator
\begin{equation*}
    P_c(D)=\sqrt{-c^2\Delta+m^2c^4}-mc^2
\end{equation*}
describes the relativistic kinetic energy after subtracting the rest energy $mc^2$. Its Fourier symbol has the formal expansion
\begin{equation}\label{Taylorexpansion}
   P_c(\xi)=\sqrt{c^2|\xi|^2+m^2c^4}-mc^2=\frac{|\xi|^2}{2m}-\frac{|\xi|^4}{8m^3c^2}+O\left(\frac{|\xi|^6}{c^4}\right)\quad\text{as }c\to\infty.
\end{equation}
 Thus the relativistic kinetic operator formally converges to its classical Schr\"odinger counterpart as $c$ tends to infinity.
This expansion suggests both the limiting stationary equation and the natural order of the approximation.

A substantial literature treats existence, qualitative properties, and nonrelativistic limits for pseudo-relativistic equations. Two principal model nonlinearities are the power-law equation
\begin{equation}\label{eq:CS-power0}
     \left(\sqrt{-c^2\Delta+m^2c^4}-mc^2\right)u+\mu u=|u|^{p-2}u\quad\text{in }\mathbb{R}^N,
\end{equation}
and the Hartree equation
\begin{equation*}
     \left(\sqrt{-c^2\Delta+m^2c^4}-mc^2\right)u+\mu u=(|x|^{-1}*|u|^2)u\quad\text{in }\mathbb{R}^3.
\end{equation*}
The extension method of Coti Zelati and Nolasco \cite{Zelati2011}, which realizes the square-root operator as a Dirichlet-to-Neumann map for a local equation in the upper half-space, provides an effective variational framework for these nonlocal problems. 
For power nonlinearities, Choi and Seok \cite{ChoiSeok2016} proved existence, sign and symmetry properties, and convergence of positive radial ground states to their nonrelativistic counterparts.
For the pseudo-relativistic Hartree equation, Lieb and Yau \cite{LiebYau1987} established the existence of positive radial ground states, while Lenzmann \cite{Lenzmann2009} proved uniqueness and nondegeneracy results for ground states in the regime of small mass by using the nonrelativistic limit.
 For power nonlinearities with integer exponents and Hartree nonlinearities, Choi, Hong and Seok \cite{ChoiHongSeok2018} subsequently obtained higher-order Sobolev regularity and the sharp convergence estimate
\begin{equation*}
    \frac{A}{c^2}\leq\Vert u_c-u_\infty\Vert_{H^s(\mathbb{R}^N)}\leq\frac{B}{c^2},\quad s\geq1,
\end{equation*}
confirming the order $c^{-2}$ predicted by the expansion \eqref{Taylorexpansion}.
Higher-order asymptotic expansions for pseudo-relativistic Hartree ground states were recently obtained in \cite{ChenCotiWei2026}.
Further results on pseudo-relativistic Schr\"odinger equations and related models can be found in \cite{ChenDingGuoWang2024, ChenDingGuo2026,
ChenWang2025, CotiZelatiNolasco2013, CotiZelatiNolasco2019, GuoZeng2017, GuoZeng2020} and the references therein.

In this paper, the logarithmic nonlinearity introduces a different variational difficulty. In the classical setting, a typical stationary equation takes the form
\begin{equation*}
    -\Delta u+V(x)u=u\log u^2\quad\text{in }\mathbb{R}^N.
\end{equation*}
Unlike smooth power and Hartree nonlinearities, the logarithmic term is singular at the origin. As a result, the functional
\begin{align*}
    \mathcal{I}(u)=\int_{\mathbb{R}^N}u^2\log u^2\,dx
\end{align*}
is not necessarily finite on the Sobolev space $H^s(\mathbb{R}^N)$.
The fractional logarithmic Sobolev inequality \cite{Cotsiolis2005} states that, for every $u\in H^s(\mathbb{R}^N)$ and $\alpha>0$,
\begin{equation}\label{fls}
    \int_{\mathbb{R}^N}u^2\log\left(\frac{u^2}{|u|_2^2}\right)\,dx+\left(N+\frac{N}{s}\log\alpha+\log\frac{s\Gamma\left(\frac{N}{2}\right)}{\Gamma\left(\frac{N}{2s}\right)}\right)|u|_2^2\leq\frac{\alpha^2}{\pi^s}|(-\Delta)^{\frac{s}{2}}u|_2^2.
\end{equation}
 This inequality rules out the possibility that $\mathcal{I}(u)=+\infty$, but it does not prevent its negative part from diverging. \footnote{ For example,
 one may choose a smooth function satisfying
\[
u(x)=
\begin{cases}
	(|x|^\frac{N}{2}\log |x|)^{-1},\quad &|x|\geq3,\\
	0,\quad &|x|\leq2.
\end{cases}
\]
Then $u\in H^s(\mathbb{R}^N)$ for every $s\geq0$, whereas $\mathcal{I}(u)=-\infty$.} Thus, the logarithmic functional is neither finite nor continuously differentiable on the Sobolev space.

Several approaches have been developed to overcome this difficulty. Cazenave \cite{Cazenave1983} introduced an Orlicz space adapted to the logarithmic term and recovered differentiability of the associated functional.  Tanaka and Zhang \cite{TanakaZhang2017} transferred the problem to a standard variational one, by using the penalization argument.
 Squassina and Szulkin \cite{SquassinaSzulkin2015} developed a nonsmooth critical point theory by decomposing the functional into the sum of a $C^1$-functional and a convex lower semicontinuous functional. These approaches have proved effective in studying existence, multiplicity, concentration phenomena, etc., see \cite{AlvesJi2020, Ardila2017, ArdilaCelySquassina2020, GalloMoscaoniSquassina2026,  JiSzulkin2016, LiuWeiZou2026, ZhangZhang2020}.

An alternative approach to the logarithmic nonlinearity is based on the limiting relation
\begin{equation}\label{powertologarithm}
    \frac{|s|^{p-2}-1}{p-2}s\to s\log|s|\quad\text{as }p\to2^+.
\end{equation}
This relation motivates the treatment of logarithm-law equations as limits of appropriately normalized families of power-law equations. This approximation was proposed formally by Troy \cite{Troy2016} for time-dependent equations, and by Wang and Zhang \cite{WangZhang2019} for nonlinear scalar field equations. Within this framework, the logarithmic problem is treated through smooth power-law approximations, thereby avoiding the direct treatment of the nonsmooth logarithmic nonlinearity.

Motivated by the idea above, we study the pseudo-relativistic Schr\"odinger equation with logarithm-law by using the power-law approximation.
Our first main result is about the existence of ground states.
\begin{theorem}[\textbf{Existence of action ground state}]\label{result1}
    Fix $c\geq 1$. Then, \eqref{problem} possesses a positive action ground state.
\end{theorem}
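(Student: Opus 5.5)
We obtain the action ground state of \eqref{problem} as the limit, as $p\to2^+$, of ground states of suitably normalized power-law problems. For $p\in(2,2^\sharp)$ with $p$ close to $2$ (here $2^\sharp=2N/(N-1)$ is the $H^{1/2}$-critical exponent), we consider
\begin{equation*}
P_c(D)u+\mu u=\frac{|u|^{p-2}-1}{p-2}\,u \quad\text{in }\mathbb{R}^N,
\end{equation*}
which is equivalent to
\begin{equation*}
P_c(D)u+\Big(\mu+\frac{1}{p-2}\Big)u=\frac{1}{p-2}|u|^{p-2}u .
\end{equation*}
This is of the form \eqref{eq:CS-power0}. By the results of Choi and Seok \cite{ChoiSeok2016}, obtained via the extension method of \cite{Zelati2011}, it has a positive, radially symmetric, decreasing ground state $u_p$. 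This state is a mountain-pass/Nehari minimizer of the $C^1$ functional
\begin{equation*}
J_p(u)=\frac12\int_{\mathbb{R}^N}P_c(\xi)|\widehat u|^2\,d\xi+\frac{\mu}{2}|u|_2^2-\int_{\mathbb{R}^N}F_p(u)\,dx,
\end{equation*}
where
\begin{equation*}
F_p(s)=\frac{|s|^p/p-s^2/2}{p-2}\;\longrightarrow\;\frac{s^2\log|s|}{2}-\frac{s^2}{4}.
\end{equation*}
In the limit, $J_p$ formally becomes the action $\mathcal{S}_{c,2\mu}$ (up to the convention in \eqref{definition-Scrho}). We denote the ground level by $m_p$.

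\textbf{Step 1: uniform bounds.} We show that $0<\liminf m_p\le\limsup m_p<\infty$. For the upper bound, fix a test function $\varphi\in C_c^\infty$. The map $t\mapsto J_p(t\varphi)$ converges uniformly on compact $t$-sets to $\mathcal S(t\varphi)$, which attains a finite maximum; hence the mountain-pass characterization bounds $m_p$. On the Nehari manifold we have $m_p=\int\big(\tfrac12 f_p(u)u-F_p(u)\big)=\tfrac1{2p}|u_p|_p^p$. We then need uniform $H^{1/2}$ and $L^2$ bounds on $u_p$, and this is the main obstacle: the coefficients $1/(p-2)$ blow up. To control them, we use the pointwise inequality $F_p(s)\le \tfrac{s^2}{2}\log^+|s|\le C_\varepsilon |s|^{2+\varepsilon}$, valid uniformly in $p$ near $2$. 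This inequality, combined with the Nehari identity and the Pohozaev identity for the extended problem, bounds $\|u_p\|_{H^{1/2}}$ in terms of $m_p$. A lower bound on $|u_p|_\infty$ (from $\mu>0$ and the equation evaluated at the maximum point) together with the Sobolev inequality then keeps $\|u_p\|$ away from $0$. Because of the $c\ge1$ hypothesis, the symbol satisfies $P_c(\xi)\ge C\min(|\xi|,|\xi|^2)$ with constants independent of $c$, although this is not essential at fixed $c$.

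\textbf{Step 2: passage to the limit.} The functions $u_p$ are radial and decreasing. By Strauss-type compactness of radial $H^{1/2}$ functions, $u_p\to u$ strongly in $L^q$ for $2<q<2^\sharp$, weakly in $H^{1/2}$, and a.e. Testing the equation against $\phi\in C_c^\infty$ and using \eqref{powertologarithm}, applied locally uniformly with dominated convergence on $\operatorname{supp}\phi$, we conclude that $u$ is a weak solution of \eqref{problem}. Nontriviality follows from the $L^\infty$ lower bound at the origin, which is inherited via uniform Hölder or $L^\infty$ estimates on $u_p$ obtained from the extension problem. Positivity then follows from the strong maximum principle for $P_c(D)+\mu$ applied to $P_c(D)u+(\mu+\log^-u)u\ge0$ after absorbing the negative part locally. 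Fatou's lemma on the negative part of the logarithm, together with the $L^q$ convergence of the positive part, gives $u\in\mathcal X$ and $\mathcal S(u)\le\liminf m_p$.

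\textbf{Step 3: minimality.} Let $v$ be any nontrivial solution of \eqref{problem}, and let $d$ denote the infimum of $\mathcal S$ over nontrivial solutions. Then $v$ lies on the logarithmic Nehari set, and its action is the maximum of $t\mapsto\mathcal S(tv)$, using the scaling $\log|tv|=\log t+\log|v|$. By density, we may approximate $v$ by $C_c^\infty$ functions $\varphi$ with $\mathcal S$ and $\max_t\mathcal S(t\varphi)$ close to those of $v$. This density step is delicate because of the negative part of $\log$, but truncation handles it for $v\in\mathcal X$. We then obtain
\begin{equation*}
\limsup m_p\le\max_t\mathcal S(t\varphi)\le\mathcal S(v)+\varepsilon .
\end{equation*}
Hence $\mathcal S(u)\le\liminf m_p\le d$, so $u$ is a positive action ground state.
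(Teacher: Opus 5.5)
Your overall route is the paper's. You use the same normalization $\frac{|u|^{p-2}-1}{p-2}u$ of the Choi--Seok ground states and bound $m_p$ from above by fibering onto the $p$-Nehari manifold. You get uniform bounds from the Nehari identity and use radial compactness. Fatou's lemma gives $u\in\mathcal X$ and $\mathcal S_{c,\mu}(u)\le\liminf m_p$, and minimality comes from comparison with every nontrivial solution.

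The step that does not hold up is nontriviality of the limit. The bound $u_p(0)\ge(1+(p-2)\mu)^{1/(p-2)}$ at the maximum point is fine. But a lower bound at one point survives weak or a.e.\ convergence only if the $u_p$ are equicontinuous near $0$ uniformly in $p$. A uniform $L^\infty$ bound by itself does not give this, and you prove neither. Such H\"older bounds are obtainable: run a $p$-uniform Moser iteration using $\frac{s^{p-2}-1}{p-2}\le s^{\delta}/\delta$ for $s\ge1$, $p-2<\delta$, then apply the regularity theory for the extension problem. That amounts to redoing Lemmas \ref{Linfty-regularity}--\ref{C2-regularity} uniformly in $p$.

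Your claim that an $L^\infty$ lower bound together with Sobolev keeps $\|u_p\|$ away from $0$ is a non sequitur. For $N\ge2$, a bump of fixed height and width $\delta$ has $H^{1/2}$-norm $O(\delta^{(N-1)/2})$.

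The paper instead bounds an $L^{q}$-norm of $u_p$ from below uniformly ($q\in(2,\frac{2N}{N-1})$) and passes this to the limit by the compact radial embedding. You already have the ingredients. Since $\frac{|s|^p-s^2}{p-2}\le0$ for $|s|\le1$ and $\le C_q|s|^q$ for $|s|\ge1$ (Lemma \ref{estimate1}(i)), the Nehari identity and \eqref{symbol-coercive} give $C\|u_p\|_{H^{1/2}}^2\le C_q|u_p|_q^q$. Combined with $|u_p|_q\le C_S\|u_p\|_{H^{1/2}}$, this yields $|u_p|_q\ge c_0>0$. Strong $L^q$ convergence of the radial $u_p$ then gives $|u|_q\ge c_0$.

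Several secondary claims also need repair.

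\begin{itemize}
\item The inequality $F_p(s)\le\frac{s^2}{2}\log^+|s|$ is false: at fixed $p>2$, $F_p(s)\sim|s|^p/(p(p-2))$ as $|s|\to\infty$. Only $F_p(s)\le C_\varepsilon|s|^{2+\varepsilon}$ for $p<2+\varepsilon$ holds, and that is all you need. No Pohozaev identity is required: the Nehari identity gives $|u_p|_2^2\le|u_p|_p^p=2p\,m_p$, and interpolation between $L^2$ and $L^{2N/(N-1)}$ then bounds $\|u_p\|_{H^{1/2}}$.
\item For the upper bound on $m_p$, locally uniform convergence of $t\mapsto J_p(t\varphi)$ does not control $\sup_{t>0}$. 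You must check that the maximizer stays bounded; it is given by $t_p^{p-2}=\bigl(|\varphi|_2^2+(p-2)A\bigr)/|\varphi|_p^p$ with $A=\int_{\mathbb{R}^N}(P_c(\xi)+\mu)|\widehat\varphi(\xi)|^2\,d\xi$.
\item That computation works verbatim for any $v\in\mathcal X$, by dominated convergence with majorant $C(|v|^q+v^2|\log|v||)$ as in Lemma \ref{energycomparison}. So the density step in your Step 3 is unnecessary: project the solution $v$ itself.
\item For positivity, ``absorbing the negative part locally'' hides the fact that the potential $\mu+\log^-u$ is unbounded exactly at the zeros of $u$. The clean argument is pointwise at a zero $x_0$ of the classical solution, with regularity from Lemmas \ref{Linfty-regularity}--\ref{C2-regularity}. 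The positive jump kernel of $P_c(D)$ gives $P_c(D)u(x_0)<0$, while the equation gives $0$. Alternatively, use the Hopf lemma on the extension as in Lemma \ref{constantsign}.
\item The limit of $J_p$ is $\mathcal S_{c,\mu}=E_c$, not $\mathcal S_{c,2\mu}$.
\end{itemize}
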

\begin{remark}
     	For the pseudo-relativistic power-law equation \eqref{eq:CS-power0}, Choi and Seok \cite{ChoiSeok2016} established the existence of positive radial ground states. 
    We will show that the positive ground state obtained by the theorem above is actually the limit of a suitable sequence of positive ground states of \eqref{eq:CS-power0}. To the best of our knowledge, this construction for pseudo-relativistic logarithmic problems does not appear in the literature.
\end{remark}
We are also concerned with the qualitative properties of the action ground states.
\begin{theorem}[\textbf{Qualitative properties of action ground state}]\label{result3}
Up to translations and a change of sign, every action ground state of \eqref{problem} is a positive, radial and strictly decreasing classical solution. Besides, the action ground state decays exponentially at infinity whenever $c>\sqrt{\mu/2m}$.
\end{theorem}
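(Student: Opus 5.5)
We establish the three assertions separately: positivity up to sign, radial monotonicity up to translation, and exponential decay. Let $u$ be an action ground state at frequency $\mu$, with $S$ the associated action, and consider the Nehari-type characterization of the ground-state level. The logarithmic term makes the Nehari map explicit. For $t>0$,
\[
\mathcal S(tu)=\tfrac{t^2}{2}\Big(\langle P_c u,u\rangle+(\mu+\tfrac12)|u|_2^2-\!\int u^2\log|u|\Big)-\tfrac{t^2\log t}{2}|u|_2^2 ,
\]
and on solutions one has $\mathcal S(u)=\tfrac14|u|_2^2$. Hence action ground states are exactly the Nehari minimizers, and the ground-state level equals $\tfrac14\inf|u|_2^2$ over nontrivial solutions. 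First we show that every ground state minimizes $\mathcal S$ on the Nehari set. Then we exploit two facts: the kinetic form does not increase under $u\mapsto|u|$, since its kernel is positive and $\langle P_c|u|,|u|\rangle\le\langle P_cu,u\rangle$ with equality iff $u$ has constant sign (the kernel of $\sqrt{-c^2\Delta+m^2c^4}$ is strictly positive); and it does not increase under symmetric decreasing rearrangement $u\mapsto u^*$, by the Fourier/heat-kernel representation $\langle P_cu,u\rangle=\frac12\iint (u(x)-u(y))^2K_c(x-y)$ with $K_c$ radially strictly decreasing. Both operations preserve $\int u^2$ and $\int u^2\log|u|$. After projecting back to the Nehari set by the explicit factor $t$, we obtain a function with action at most $\mathcal S(u)$, and strictly less unless equality holds.

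\textbf{Sign.} If $u$ changed sign, strict inequality in the kinetic term would produce a competitor below the ground level, a contradiction. So $u$ has constant sign, and we may take $u\ge0$. Writing the equation as $(P_c+\mu+\lambda')u=u(\log|u|+\lambda')+\dots$ and using the positive resolvent kernel of $\sqrt{-c^2\Delta+m^2c^4}+\kappa$, the strong maximum principle gives $u>0$. Alternatively, one can argue via the Coti Zelati--Nolasco extension and the Hopf lemma.

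\textbf{Symmetry.} Equality in the rearrangement inequality for a strictly decreasing kernel, in the strict-rearrangement theorem of Lieb type, forces $u=u^*(\cdot-x_0)$. This gives radial symmetry and monotonicity about a point. Strict monotonicity follows because $u^*$ is nonincreasing and the equality case forces strict decrease; alternatively, one can use moving planes via the extension. We expect this equality-case analysis to be the main obstacle, since it must be carried out for the nonlocal kernel, whose strict decrease we verify from its Bessel-function form.

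\textbf{Regularity.} A bootstrap on $u=(P_c+\mu+\kappa)^{-1}(u\log u+\kappa u)$ gives $u\in H^s$ for all $s$. Since $u\log u$ is Hölder near $u=0$ and smooth where $u>0$, we obtain $u\in C^{2,\alpha}$, so $u$ is a classical solution.

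\textbf{Decay.} Since $u\to0$, we have $\log u\to-\infty$, so eventually $u\log u\le -Mu$ for any $M$. Hence $(P_c+\mu+M)u\le0$ outside a large ball. The Green kernel of $\sqrt{-c^2\Delta+m^2c^4}-mc^2+\mu+M$ decays like $e^{-\delta|x|}$, with $\delta$ determined by the branch point of the symbol at $|\xi|=imc$. The comparison principle on exterior domains then yields $u\le Ce^{-\delta|x|}$. The hypothesis $c>\sqrt{\mu/2m}$ is where the authors presumably need $mc^2$ large relative to $\mu$ to keep an explicit barrier $e^{-\delta|x|}$ admissible. We would verify this by checking that $P_c(i\delta)+\mu>0$ for some $\delta\in(0,mc)$, which amounts to a condition of the form $mc^2-\sqrt{m^2c^4-c^2\delta^2}<\dots$ that reduces to this inequality.
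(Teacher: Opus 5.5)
Your route differs substantially from the paper's. The overall plan is viable, but several steps fail as written.

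\textbf{Comparison with the paper.} The paper obtains each property as follows:
\begin{itemize}
\item Sign: if $u$ changes sign then $U^\pm\in\mathcal N_c$, so $I_c(U)\ge 2m_c$ (Lemma \ref{constantsign}).
\item Symmetry and strict monotonicity: moving planes on the extension, for \emph{every} positive solution (Lemma \ref{radiallysymmetric}).
\item Regularity: $L^\infty$ by Moser iteration, then $C^{1,\alpha}\cap C^2$ for the extension (Lemmas \ref{Linfty-regularity}, \ref{C2-regularity}).
\item Decay: comparison of $U$ with the barrier $Ae^{-\beta(|x|-R)}e^{-\theta y}$ (Lemma \ref{expdecay}).
\end{itemize}
Your competitor arguments with $|u|$ and $u^*$ rest on the strictly positive, strictly decreasing jump kernel of $P_c$ and a Frank--Seiringer type equality case. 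They work only for Nehari minimizers. That suffices here, but the paper's versions (all solutions with action below $2m_c$, all positive solutions) are what Theorem \ref{result4} later needs.

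Your decay argument is actually stronger. Since $u\log u\le -Mu$ eventually for \emph{every} $M$, you can compare with a regularized Green function of $P_c+\mu+M$, e.g.\ $w=C(P_c+\mu+M)^{-1}\chi_{B_1}$, so that testing with $(u-w)^+$ is legitimate. This gives exponential decay for all $c>0$. The paper needs $c>\sqrt{\mu/2m}$ only because its barrier uses $u\log u\le 0$ on $\{u\le1\}$, i.e.\ $M=0$. That forces $c\sqrt{m^2c^2-\beta^2}>mc^2-\mu$, i.e.\ $\beta^2<2m\mu-\mu^2/c^2$. Your closing remark is wrong: $P_c(i\delta)+\mu>0$ holds for small $\delta$ for every $c$, so it does not reduce to the hypothesis.

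\textbf{Steps that fail as written.}
\begin{itemize}
\item ``Every ground state minimizes $\mathcal S$ on the Nehari set'' does not follow from $\mathcal S(u)=\frac14|u|_2^2$ on solutions. You need that $\inf_{\mathcal N}\mathcal S$ is attained by a solution, which is the content of the existence proof (Theorem \ref{result1}). Without it, $t|u|$ and $tu^*$ contradict nothing.
\item The rearrangement equality case yields only $u=u^*(\cdot-x_0)$. A symmetric nonincreasing function with plateaus is itself an equality case, so strict decrease is not forced. Use moving planes, or a reflection argument with the equation: on a plateau $P_cu$ is constant, but comparing two reflected plateau points and using strict decrease of the kernel gives a strict inequality unless $u$ is symmetric about a hyperplane missing $x_0$.
\item The resolvent argument for $u>0$ fails. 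For any fixed $\kappa$, the source $u(\log u+\kappa)$ is negative where $0<u<e^{-\kappa}$, in particular near any zero of $u$. Use Hopf on the extension as the paper does, or $P_cu(x_0)=-\int u(y)K_c(x_0-y)\,dy<0$ at a zero $x_0$; the latter requires regularity first.
\item The bootstrap cannot start in $L^2$. Membership $u\in\mathcal X$ controls $u^2|\log u|$ but not $u^2\log^2u$; the paper gets $u\log u\in L^2$ only after exponential decay (Lemma \ref{H1normboundedness}). Globally, $\nabla(u\log u)=(1+\log u)\nabla u$ carries the unbounded factor $\log u$, so ``$H^s$ for all $s$'' is unjustified. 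You also omit the $L^\infty$ bound.
\end{itemize}
For the last point, bootstrap instead in $L^p$ with $p>2$ (on $\{u<1\}$ one has $|u\log u|\le C_\epsilon u^{1-\epsilon}$), or use Moser iteration. Then use that $s\mapsto s\log|s|$ is $C^{0,\eta}$ on bounded intervals to reach $C^{1,\alpha}$, and $u>0$ to get local $C^2$.
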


We next turn to the nonrelativistic limit, which connects the pseudo-relativistic Schr\"odinger equation \eqref{problem} with its classical counterpart
\begin{equation}\label{limitproblem0}\tag{$P_\infty$}
    -\frac{1}{2m}\Delta u+\mu u=u\log|u|\quad\text{in }\mathbb{R}^N.
\end{equation}
The existence and uniqueness of \eqref{limitproblem0} are obtained by
 d'Avenia-Montefusco-Squassina \cite{dAvenia2014} and Troy \cite{Troy2016}.
Combining their results, we conclude that for $N\geq2$, up to translations, there exists a unique positive $C^2$ solution of \eqref{limitproblem0} which tends to zero at infinity. This solution is the so-called Gausson 
\begin{equation*}
    \mathfrak{g}(x)=e^{\mu+\frac{N}{2}}e^{-\frac{m}{2}|x|^2}.
\end{equation*}
See also Wang and Zhang \cite[Remark1.1]{WangZhang2019}.

For the fixed frequency $\mu$, let
$E_c:\mathcal{X}\to \mathbb{R}$,
\begin{equation*}
     E_c(u)
    =
    \frac{1}{2}
    \int_{\mathbb R^N}
    \left(
        \sqrt{c^2|\xi|^2+m^2c^4}-mc^2+\mu
    \right)|\widehat u(\xi)|^2\,d\xi
    +\frac{1}{4}\int_{\mathbb{R}^N}u^2\,dx-
    \frac{1}{2}\int_{\mathbb R^N}u^2\log |u|\,dx.
\end{equation*}
One can easily check that $E_c=\mathcal{S}_{c,\mu}$, where $\mathcal{S}_{c,\mu}$ is given by \eqref{definition-Scrho}, so $E_c$ is exactly the action functional of \eqref{problem}.
The action ground-state level of $E_c$ is
\begin{equation}\label{actiongroundstatelevel}
    m_c=\min\left\{E_c(u):u\in\mathcal{X}\setminus\{0\}\text{ is a solution of }\eqref{problem}\right\}.
\end{equation}

The next result identifies the Gausson as the nonrelativistic limit of all positive action ground states.
\begin{theorem}[\textbf{Nonrelativistic limit}]\label{result2}
    There exists $c_1>0$ such that every family $\{u_c\}_{c\geq c_1}$
    of positive action ground states of \eqref{problem} converges, after suitable translations and passage to a subsequence,
     to the Gausson $\mathfrak{g}$ in $H^1(\mathbb{R}^N)$.
    Moreover, the action ground-state levels satisfy
    \begin{equation*}
         m_c\to m_\infty
        =
        \frac{1}{4}|\mathfrak{g}|_2^2=\frac{1}{4}e^{2\mu+N}\left(\frac{\pi}{m}\right)^\frac{N}{2}\quad\text{as }c\to\infty.
    \end{equation*}
\end{theorem}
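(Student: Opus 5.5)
The plan is to prove convergence of the levels first, then get uniform bounds and compactness after translation, and finally identify the limit as $\mathfrak g$ through the uniqueness result for \eqref{limitproblem0}.

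\emph{Level estimates.} Since $P_c(\xi)\le |\xi|^2/(2m)$ for every $c$, we have $E_c(u)\le E_\infty(u)$ for all $u$, where $E_\infty$ is the limit action. For the upper bound I would use the Nehari-type characterization of $m_c$ that comes out of the proof of Theorem \ref{result1}. For the logarithmic nonlinearity, the fibering $t\mapsto E_c(tu)$ has a unique maximum. Moreover, on the Nehari set
\[
\mathcal N_c=\left\{u\neq0:\int(P_c+\mu)|\widehat u|^2=\int u^2\log|u|\right\}
\]
one has $E_c(u)=\frac14|u|_2^2$, and $m_c$ is the minimum of this quantity over $\mathcal N_c$. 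Take the fibering maximizer $t_c\mathfrak g\in\mathcal N_c$. Because $t_c$ maximizes $E_c(t\mathfrak g)$ and $E_c\le E_\infty$,
\[
m_c\le E_c(t_c\mathfrak g)\le E_\infty(t_c\mathfrak g)\le \max_{t>0}E_\infty(t\mathfrak g)=E_\infty(\mathfrak g)=m_\infty.
\]
So $\limsup_{c\to\infty} m_c\le m_\infty$.

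\emph{Uniform bounds.} For a ground state $u_c$ we have $|u_c|_2^2=4m_c$, which is bounded. For the kinetic part, I would use the elementary inequality $P_c(\xi)\ge \min\{|\xi|^2/(4m),\,c|\xi|/2\}$, valid for $c\ge1$. Combined with the Nehari identity and the logarithmic Sobolev inequality \eqref{fls} with $s=1/2$ and small $\alpha$, this should give
\[
\int P_c(\xi)|\widehat u_c|^2\,d\xi\le C.
\]
This bounds the low frequencies in $H^1$, but it only bounds the high frequencies ($|\xi|\gtrsim mc$) in $H^{1/2}$ with a weight $c$. To upgrade to a uniform $H^1$ bound I would test the equation against $-\Delta u_c$, or use Fourier multipliers, with the bound $u\log|u|\lesssim |u|^{1-\delta}+|u|^{1+\delta}$ near zero and infinity. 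Together with $L^\infty$ bounds coming from the positive radial decreasing structure of Theorem \ref{result3}, this should yield $\|u_c\|_{H^1}\le C$ for $c\ge c_1$. This $H^1$ regularity, uniform in $c$, is the step I expect to be the main obstacle. The symbol $P_c$ only grows like $c|\xi|$ at high frequency, so I need to show that the high-frequency part of $u_c$ is $o(1)$ in $H^1$; I would use the radial decreasing profile and Strauss-type decay to control the nonlinearity in $L^2$.

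\emph{Nonvanishing and passage to the limit.} The ground states are radial decreasing, so no translations are needed beyond centring at the origin. Vanishing is excluded as follows. On $\mathcal N_c$ the identity $\int(P_c+\mu)|\widehat u_c|^2=\int u_c^2\log|u_c|$ forces $\int_{\{|u_c|>1\}}u_c^2\log|u_c|\ge \mu|u_c|_2^2$. A lower bound on $|u_c|_2$ also follows from \eqref{fls}, so $\sup_c|u_c|_\infty\ge e^{\mu}$. Since $u_c$ is radial decreasing, $|u_c|_\infty=u_c(0)$. By compactness of radial $H^1$ functions in $L^q$ for $2<q<2^*$, we get $u_c\rightharpoonup u$ in $H^1$ along a subsequence, with $u\neq0$. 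For each fixed $\xi$, $P_c(\xi)\to |\xi|^2/(2m)$. Passing to the limit in the weak formulation, with the logarithmic term handled by the splitting above and dominated convergence on compact sets plus radial tail decay, shows that $u\ge0$ is a nontrivial radial solution of \eqref{limitproblem0}. By the uniqueness result of d'Avenia--Montefusco--Squassina and Troy, $u=\mathfrak g$. The limit is independent of the subsequence, so the whole family converges.

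\emph{Strong convergence and levels.} Weak lower semicontinuity and Fatou's lemma for $|u_c|_2^2$ give
\[
m_\infty=\tfrac14|\mathfrak g|_2^2\le\liminf\tfrac14|u_c|_2^2=\liminf m_c\le m_\infty .
\]
Hence $|u_c|_2\to|\mathfrak g|_2$, which gives $L^2$ convergence, and $m_c\to m_\infty$. Convergence of the Nehari identities then gives
\[
\int P_c|\widehat u_c|^2\to \frac1{2m}|\nabla\mathfrak g|_2^2 .
\]
Together with $P_c(\xi)\le|\xi|^2/(2m)$ and the uniform high-frequency control from the previous step, this yields $\nabla u_c\to\nabla\mathfrak g$ in $L^2$, i.e.\ convergence in $H^1$. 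The explicit value $\frac14e^{2\mu+N}(\pi/m)^{N/2}$ is a Gaussian integral.
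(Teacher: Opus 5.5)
Your upper bound $m_c\le E_c(t_c\mathfrak g)\le E_\infty(t_c\mathfrak g)\le E_\infty(\mathfrak g)=m_\infty$ is correct, in fact for every $c$. Combined with weak lower semicontinuity it gives strong $L^2$ convergence and $m_c\to m_\infty$ more directly than the paper does. Passing to the limit and identifying $\mathfrak g$ also go through with only the uniform $H^{1/2}$ bound. One small fix there: nonvanishing should be concluded from $\mu|u_c|_2^2\le\int_{\{u_c>1\}}u_c^2\log u_c\,dx\le C|u_c|_{2+\delta}^{2+\delta}$ together with your $L^2$ lower bound. The pointwise bound $u_c(0)\ge e^{\mu}$ alone does not prevent vanishing when $N\ge2$.

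The genuine gap is the $H^1$ upgrade, and the ingredients you propose for it do not work. First, radial monotonicity gives no uniform $L^\infty$ bound. The value $|u_c|_\infty=u_c(0)$ is not controlled by $|u_c|_2$ or $\Vert u_c\Vert_{H^{1/2}(\mathbb{R}^N)}$, and Theorem \ref{result3} gives boundedness only for each fixed $c$. The paper obtains $\sup_c|u_c|_\infty<\infty$ by a Moser iteration based on a Stroock--Varopoulos inequality for $P_c(D)$ and the $c$-uniform coercivity \eqref{symbol-coercive}. Second, bounding $u_c\log u_c$ in $L^2$ uniformly requires $\sup_c\int_{\{u_c<1\}}u_c^2(\log u_c)^2\,dx<\infty$, which is essentially a uniform $L^{2-\delta}$ bound. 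Strauss-type decay cannot provide this. The bound $u_c(r)\lesssim r^{-N/2}$ (from monotonicity and the $L^2$ bound), or $r^{-(N-1)/2}$ for radial $H^1$ functions, only gives $u_c^2(\log u_c)^2\lesssim r^{-N}(\log r)^2$, which is not integrable at infinity. This is exactly why the paper first proves uniform exponential decay (Lemma \ref{uniformexponentialdecay}). That lemma is a comparison argument for the extension, with $R$ and the boundary constant $A$ chosen independently of $c$. Only then does the paper square the equation in Lemma \ref{H1normboundedness}.

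Your final step has a second gap that a uniform $H^1$ bound alone would not close. Convergence of the Nehari identities does give $\int P_c|\widehat{u_c}|^2\,d\xi\to\frac{1}{2m}|\nabla\mathfrak g|_2^2$. However, for $|\xi|\gg c$ the symbol satisfies $P_c(\xi)\approx c|\xi|\ll|\xi|^2$, so this cannot force $\int_{\{|\xi|>\epsilon c\}}|\xi|^2|\widehat{u_c}|^2\,d\xi\to0$. The inequality $P_c\le|\xi|^2/(2m)$ points the wrong way for $\limsup|\nabla u_c|_2\le|\nabla\mathfrak g|_2$. The ``uniform high-frequency control'' you invoke is never established.

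The paper controls all frequencies at once through the exact identity $c^2|\nabla u_c|_2^2+m^2c^4|u_c|_2^2=|(mc^2-\mu)u_c+u_c\log u_c|_2^2$. After dividing by $c^2$, and using that $c^{-2}\int u_c^2(\log u_c)^2\,dx\to0$ by the uniform decay, this gives $\limsup_{c\to\infty}(|\nabla u_c|_2^2+2m\mu|u_c|_2^2)\le2m\limsup_{c\to\infty}\int u_c^2\log u_c\,dx$. The paper then concludes with the logarithmic Brezis--Lieb lemma.

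Alternatively, your idea of testing with $-\Delta u_c$ can be made rigorous using the difference quotients $v_h$ from the proof of Theorem \ref{result5}:
\begin{itemize}
\item Since $\mathcal{D}_f(u_c(\cdot+he_j),u_c)\le1+\log M_\infty=:K$, one gets $\int(P_c(\xi)+\mu)|\widehat{v_h}|^2\,d\xi\le K|v_h|_2^2$.
\item The set $\{P_c+\mu\le2K\}$ lies in a ball independent of $c\ge1$, because $P_c$ increases with $c$. Splitting it off and letting $h\to0$ gives $\sup_c\int(P_c(\xi)+\mu)|\xi|^2|\widehat{u_c}|^2\,d\xi<\infty$.
\item This yields both the uniform $H^1$ bound and the high-frequency tightness without the uniform decay estimate.
\end{itemize}
That route still rests on the uniform $L^\infty$ bound $M_\infty$, which your proposal does not supply.
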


By virtue of the convergence and limiting behavior above, we also show the uniqueness of low-action level solutions.
\begin{theorem}[\textbf{Uniqueness of low-action level}]\label{result4}
    There exists $c_*> 0$ such that, for every $c\geq c_*$, any two nontrivial solutions $u, v\in\mathcal{X}$ of \eqref{problem} whose actions are below $2m_c$ coincide up to a translation and sign change. That is,
    there exist $\sigma\in\{-1,1\}$ and $x_0\in\mathbb{R}^N$ such that
    \begin{equation*}
        u(x)=\sigma v(x-x_0)\quad\text{for all }x\in\mathbb{R}^N.
    \end{equation*}
    In particular, for $c\geq c_*$, \eqref{problem} admits a unique positive action ground state up to translations.
\end{theorem}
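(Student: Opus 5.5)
The plan is a contradiction argument built on the nonrelativistic limit (Theorem \ref{result2}) and the nondegeneracy of the Gausson. First, the action restriction $E_c(u)<2m_c$ should force every such solution to have one sign. Write $u=u^+-u^-$. If both parts were nontrivial, testing \eqref{problem} with $u^\pm$ (using that the nonlocal cross term $\langle P_c(D)u^+,u^-\rangle$ is $\le 0$, since the kernel of $P_c(D)$ is positive off the diagonal) should put each $u^\pm$ on or below the Nehari set. Using $E_c=E_c-\frac12\langle E_c'(u),u\rangle=\frac14|u|_2^2$ on solutions, this gives $E_c(u)\ge 2m_c$, which is excluded. So $u$ has one sign, and after a sign change $u\ge0$; the strong maximum principle for the pseudo-relativistic operator then gives $u>0$. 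Next, by a moving-plane argument of the kind behind Theorem \ref{result3}, $u$ is radial and decreasing about some point, which we may take to be the origin.

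Second, we show that low-action positive solutions converge to $\mathfrak g$. Suppose $c_n\to\infty$ and $u_n$ are positive radial solutions with $E_{c_n}(u_n)<2m_{c_n}$. Since $m_{c_n}\to m_\infty$, the norms $|u_n|_2^2=4E_{c_n}(u_n)$ are bounded. The logarithmic Sobolev inequality \eqref{fls} together with the Nehari identity bounds $\langle P_{c_n}(D)u_n,u_n\rangle$. Using $P_c(\xi)\ge \frac{|\xi|^2}{2m}\cdot\frac{1}{1+|\xi|/(2mc)}$-type lower bounds, we get uniform $H^{1/2}$ control, and then uniform $H^1$ control on low frequencies. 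To upgrade to full $H^1$ bounds, I would reuse the regularity estimates from the proof of Theorem \ref{result2}. Radial compactness gives a limit $u_\infty\ge0$ solving \eqref{limitproblem0}. The limit is nontrivial because solutions of \eqref{problem} satisfy a uniform lower bound $|u|_\infty\ge e^{\mu}$-type, read off at the maximum point. By the uniqueness of positive solutions of \eqref{limitproblem0} (d'Avenia--Montefusco--Squassina and Troy), $u_\infty=\mathfrak g$, with convergence in $H^1$ as in Theorem \ref{result2}.

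Third, the uniqueness step. Suppose $u_n\neq v_n$ are two distinct positive radial low-action solutions at $c_n\to\infty$, both converging to $\mathfrak g$. Set $w_n=(u_n-v_n)/\|u_n-v_n\|$. Then $w_n$ satisfies
\[
P_{c_n}(D)w_n+\mu w_n=a_n(x)w_n,\qquad a_n=\int_0^1\big(\log(tu_n+(1-t)v_n)+1\big)\,dt .
\]
Passing to the limit should give a nontrivial radial $w$ with $-\frac{1}{2m}\Delta w+\mu w=(\log\mathfrak g+1)w$. Since $\log\mathfrak g=\mu+\frac N2-\frac m2|x|^2$, this linearized operator is a harmonic oscillator, whose kernel in the radial class is trivial: the kernel is spanned by $\partial_j\mathfrak g$, which are nonradial. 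This is the contradiction.

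The main obstacle is the passage to the limit, because $a_n$ is unbounded: it behaves like $-|x|^2$ at infinity and is logarithmically singular where the $u_n$ are small. The singularity has the favorable sign (a confining potential), so I would first obtain uniform Gaussian-type lower and upper envelopes for $u_n$ and $v_n$. These would control $a_n$ locally and prevent loss of $w_n$-mass at infinity through a weighted estimate $\int |x|^2w_n^2\lesssim1$, obtained by testing the equation with $w_n$. I would also need to choose the normalization $\|\cdot\|$ (say $L^2$) so that this weighted bound yields strong $L^2$ convergence, which keeps $w\neq0$.
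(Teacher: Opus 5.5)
Your overall route matches the paper's: you get a sign from $E_c<2m_c$ by splitting $u=u^+-u^-$ (nonlocally, via positivity of the jump kernel, where the paper uses the extension; both work), then radial symmetry by moving planes. You then rerun the uniform Section 4 estimates, which need only the equation, $|u|_2^2=4E_c(u)<8m_c$, positivity and radial monotonicity. Finally you take a normalized difference with $a_n=\mathcal D_f(u_n,v_n)$ and obtain a radial element of $\mathrm{Ker}(\mathcal L_\infty)$, hence zero. But two of the steps you propose would not work as written.

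First, nonvanishing of the weak limit cannot be read off from $\max u_n=u_n(0)\ge e^{\mu}$. A pointwise bound at the center does not pass to weak $H^1$ limits when $N\ge2$: radial profiles of height $e^\mu$ can live on shrinking balls. You would need a modulus of continuity uniform in $n$, which you do not have. The paper instead combines the Nehari identity with \eqref{symbol-coercive}:
$|u_n|_{2+\delta}^2\le C\|u_n\|_{H^{1/2}}^2\le C\int u_n^2\log u_n\le C|u_n|_{2+\delta}^{2+\delta}$.
This gives $|u_n|_{2+\delta}\ge\rho>0$, and the compact embedding $H^{1/2}_r\hookrightarrow L^{2+\delta}$ carries the bound to the limit.

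Second, uniform Gaussian upper envelopes are not available. Lemma~\ref{uniformexponentialdecay} gives only $u_n\le Ce^{-\beta|x|}$. For fixed $c$ a global Gaussian bound should not be expected either, since the jump kernel of $P_c(D)$ decays only like $e^{-mc|x|}$. So the weighted bound $\int|x|^2w_n^2\lesssim1$ is unjustified.

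It is also unnecessary. The exponential bound gives a fixed $R$ with $u_n,v_n\le e^{-1}$ off $B_R$, so $a_n\le0$ there by Lemma~\ref{log-divided-difference}(iii), while $a_n\le C_R$ on $B_R$. Testing the equation with $w_n$ then yields $C_{m,\mu}\|w_n\|_{H^{1/2}}^2\le C_R\int_{B_R}w_n^2$. With either normalization this bounds the $L^2(B_R)$-mass of $w_n$ from below, which contradicts $w=0$; the paper normalizes in $H^{1/2}$ and argues exactly this way.

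The remaining ingredients also need no envelopes. The two-sided local bounds behind $a_n\to1+\log\mathfrak g$ in $L^2_{loc}$ come from radial monotonicity plus $L^2$-convergence on an annulus. The membership $w\in\Sigma_\infty$, which the Gausson's nondegeneracy requires, follows from a cutoff argument on the limit equation using only $w\in L^2$.
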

\begin{remark}
    Theorem \ref{result4} also implies that 
    the action ground-state level is isolated. For $c\geq c_*$, every nontrivial solution $u\in\mathcal{X}$ satisfies either $E_c(u)=m_c$ or $E_c(u)\geq 2m_c$.
    Hence no action level in $(m_c,2m_c)$ is attained.
    Actually, one can show that if a nontrivial solution $u$ satisfies 
    \begin{equation*}
            E_c(u)\leq\theta m_\infty
           =
            \frac{\theta}{4}e^{2\mu+N}
            \left(\frac{\pi}{m}\right)^\frac{N}{2},\quad \theta\in(1,2),\quad c\text{ large enough,}
      \end{equation*}
      then it
      is an action ground state. 
    \end{remark}
We next formulate nondegeneracy.
Motivated by the weak formulation for classical logarithmic Schr\"odinger equations adopted by Ardila and Squassina \cite{ArdilaSquassina2018},
for a positive action ground state $u_c$ of \eqref{problem},
we introduce the Hilbert space
\begin{equation*}
    \Sigma_c=\left\{\varphi\in H^{1/2}(\mathbb{R}^N):\int_{\mathbb{R}^N}|\log u_c|\varphi^2\,dx<\infty\right\}
\end{equation*}
equipped with the norm
\begin{equation*}
    \Vert \varphi\Vert_{\Sigma_c}^2=\Vert \varphi\Vert_{H^{1/2}(\mathbb{R}^N)}^2+\int_{\mathbb{R}^N}|\log u_c|\varphi^2\,dx.
\end{equation*}
%Completeness follows by identifying the limits of a Cauchy sequence in $H^{1/2}(\mathbb R^N)$ and in the weighted $L^2$ space. Moreover, since $u_c$ is strictly positive and continuous, the weight $|\log u_c|$ is locally bounded, and a standard cutoff and mollification argument yields the density of $C_c^\infty(\mathbb{R}^N)$ in $\Sigma_c$.

We define the symmetric bilinear form
\begin{equation*}
    \ell_c(\varphi,\psi)=\int_{\mathbb{R}^N}P_c(\xi)\widehat{\varphi}(\xi)\overline{\widehat{\psi}(\xi)}\,d\xi+\int_{\mathbb{R}^N}(\mu-1-\log u_c)\varphi\psi\,dx,\quad \varphi,\psi\in\Sigma_c.
\end{equation*}
By the continuity of $\ell_c$, there exists a unique bounded linear operator $\mathcal{L}_c:\Sigma_c\to\Sigma_c^*$
such that
\begin{equation*}
    \left\langle \mathcal{L}_c\varphi, \psi\right\rangle_{\Sigma_c^*,\Sigma_c}=\ell_c(\varphi,\psi),\quad \varphi,\psi\in\Sigma_c.
\end{equation*}
%Its kernel is defined by
%\begin{equation*}
%    \text{Ker}(\mathcal{L}_c)
%    =
%   \left\{
%    \varphi\in\Sigma_c:
%    \ell_c(\varphi,\psi)=0
%    \text{ for every }\psi\in\Sigma_c
%    \right\}.
%\end{equation*}
We call $u_c$ nondegenerate if
\begin{equation*}
        \text{Ker}(\mathcal{L}_c)=\text{span}\{\partial_{x_1}u_c,\ldots,\partial_{x_N}u_c\}.
    \end{equation*}
\begin{theorem}[\textbf{Nondegeneracy of action ground state}]\label{result5}
    There exists $c^*>0$ such that, for every $c\geq c^*$, the positive action ground state $u_c$ of \eqref{problem} is nondegenerate.
\end{theorem}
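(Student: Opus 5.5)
We argue by contradiction combined with the nonrelativistic limit of Theorem \ref{result2}, in the spirit of Lenzmann's small-mass argument. Suppose there are $c_n\to\infty$ and positive action ground states $u_n:=u_{c_n}$ (radial and decreasing about the origin by Theorem \ref{result3}) such that $\text{Ker}(\mathcal{L}_{c_n})$ strictly contains $\text{span}\{\partial_{x_i}u_n\}$.

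The inclusion $\partial_{x_i}u_n\in\text{Ker}(\mathcal{L}_{c_n})$ comes from differentiating \eqref{problem} in $x_i$. This requires enough regularity and decay of $u_n$ so that $\partial_{x_i}u_n\in\Sigma_{c_n}$, using the exponential decay from Theorem \ref{result3}. Strict containment then gives some $\varphi_n\in\text{Ker}(\mathcal{L}_{c_n})$ that is $L^2$-orthogonal to every $\partial_{x_i}u_n$; we normalize it by $\Vert\varphi_n\Vert_{L^2}=1$. We would also decompose $\varphi_n$ into spherical harmonics. Using radiality of $u_n$, we expect to reduce to two cases: (a) $\varphi_n$ radial; (b) $\varphi_n$ in the degree-one sector, orthogonal to $\partial_{x_i}u_n$. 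For higher degrees, the Perron–Frobenius-type positivity of $e^{-tP_c}$ makes the lowest eigenvalue of each angular sector increase with the degree, and this should rule out kernel elements.

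\textbf{Compactness step.} Testing $\ell_{c_n}(\varphi_n,\varphi_n)=0$ gives
\[
\int P_{c_n}|\widehat{\varphi_n}|^2+\int(\mu-1-\log u_n)\varphi_n^2=0.
\]
Theorem \ref{result2} gives uniform bounds and convergence $u_n\to\mathfrak g$. To control the logarithm we need a uniform lower Gaussian-type bound on $u_n$ near infinity; then $-\log u_n\ge -C$ and $-\log u_n\to\infty$ at infinity uniformly in $n$. With this bound, $\int P_{c_n}|\widehat{\varphi_n}|^2+\int|\log u_n|\varphi_n^2$ stays bounded. This provides tightness of $\varphi_n$ in $L^2$ (the weight is a confining potential). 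Since $P_{c}(\xi)\ge \frac{|\xi|^2}{2m}\cdot\frac{1}{1+|\xi|/(mc)}$, it also gives local $H^{1/2}$ bounds that improve to $H^1$ on bounded frequencies. Hence $\varphi_n\to\varphi$ strongly in $L^2$ with $\Vert\varphi\Vert_2=1$. Passing to the limit in $\ell_{c_n}(\varphi_n,\psi)=0$ for $\psi\in C_c^\infty$, using $P_{c_n}(\xi)\to|\xi|^2/2m$ and $\log u_n\to\log\mathfrak g$ locally uniformly, shows that $\varphi$ is a weak solution of
\[
-\tfrac1{2m}\Delta\varphi+(\mu-1-\log\mathfrak g)\varphi=0,\qquad \mu-1-\log\mathfrak g=\tfrac m2|x|^2-1-\tfrac N2.
\]
This limit is a shifted harmonic oscillator, whose kernel is exactly $\text{span}\{x_i\mathfrak g\}=\text{span}\{\partial_{x_i}\mathfrak g\}$ (the ground energy $N/2$ shifted by $-1-N/2$ gives kernel at the first excited level). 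The orthogonality conditions pass to the limit: $\int\varphi\,\partial_{x_i}u_n\to\int\varphi\,\partial_{x_i}\mathfrak g$, using $H^1$ convergence of $u_n$. So $\varphi\perp\partial_{x_i}\mathfrak g$, which forces $\varphi=0$ and contradicts $\Vert\varphi\Vert_2=1$.

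The main obstacle is the compactness step. Specifically, one needs uniform-in-$c$ pointwise two-sided control of $u_c$ so that $|\log u_c|$ behaves like $|x|^2$ uniformly. The lower bound is needed for coercivity, and the upper bound for making sense of convergence of the weights. I would first try to derive the lower bound by comparison, using the equation's positivity-preserving resolvent $(P_c+\mu+\text{const})^{-1}$ together with convergence to $\mathfrak g$ on balls. If uniform control at infinity fails, I would instead use only $\log u_n\le C$ (from $L^\infty$ bounds) plus the positive part of $|x|^2$-growth obtained from an upper decay estimate, which suffices for tightness since $-\log u_n$ large where $u_n$ small. The angular reduction is a secondary, more technical point, and can be avoided if the contradiction argument is run directly on the full kernel.
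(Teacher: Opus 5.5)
Your skeleton matches the paper's. You argue by contradiction along $c_n\to\infty$ with a kernel element $z_n$ satisfying $|z_n|_2=1$ and $\langle z_n,\partial_{x_j}u_n\rangle=0$. You pass to a limit in $\text{Ker}(\mathcal{L}_\infty)$ that is orthogonal to $\partial_{x_j}\mathfrak g$, and conclude from the nondegeneracy of the Gausson. That nondegeneracy handles all angular sectors at once, so the spherical-harmonic/Perron--Frobenius detour can indeed be dropped.

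The difference is the compactness step. The paper never proves tightness. It tests the kernel equation with $z_n$ and uses only the \emph{sign} information $1+\log u_n\le 0$ on $|x|\ge R$, which comes from $u_n\le Ce^{-\beta|x|}$ (Lemma \ref{uniformexponentialdecay}). It also uses $1+\log u_n\le C_R$ on $B_R$ (uniform $L^\infty$ bound) and the coercivity $P_c(\xi)+\mu\ge C_{m,\mu}(1+|\xi|^2)^{1/2}$. This gives $C_{m,\mu}\le C_R\int_{B_R}z_n^2$, so the $L^2_{\mathrm{loc}}$ limit is nonzero; $z\in\Sigma_\infty$ then follows from a cutoff argument on the limit equation. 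Your version instead uses the \emph{growth} of the weight to get tightness and hence $|z|_2=1$. Linear growth $-\log u_n\ge\beta|x|-\log C$, which the same decay estimate provides, already suffices. Both work: the paper's route needs only the sign of $1+\log u_n$ off a ball, while yours buys strong $L^2$ convergence.

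What you should correct is your account of the ``main obstacle'', which is inverted. The bounds $-\log u_n\ge -C$ and $-\log u_n\to\infty$ uniformly at infinity come from \emph{upper} bounds on $u_n$, and both are already available in the paper. A lower Gaussian bound at infinity is needed for nothing, coercivity included. So your fallback is the actual argument, and the comparison-principle lower bound can be discarded.

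Lower bounds are needed only on compact sets, to pass to the limit in the weight. The paper gets $u_n\ge\alpha_K>0$ on $K$ from radial monotonicity plus $L^2$ convergence on an annulus. This gives $\log u_n\to\log\mathfrak g$ in $L^2_{\mathrm{loc}}$, which is enough; the locally uniform convergence you assert is neither justified nor needed.

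Finally, $\partial_{x_j}u_n\in\Sigma_{c_n}$ does not follow from the decay of $u_n$, because nothing bounds $|\log u_n|$ from above at infinity. The paper extracts $\int|1+\log u_c|\,|\partial_{x_j}u_c|^2<\infty$ from the equation itself. It uses difference quotients, the sign $\mathcal{D}_f\le 0$ outside $B_R$ (Lemma \ref{log-divided-difference}(iii)), the resulting energy identity, and Fatou.
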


We finally return to the energy ground states, which correspond to the attainment functions of \eqref{energygroundstatelevel}. Denote the set of energy ground states by
\begin{equation*}
    \mathcal{G}^{e}_{c,\rho}=\left\{u\in\mathcal{X}:|u|_2^2=\rho,\mathcal{H}_c(u)= e_c(\rho)\right\},
\end{equation*}
and the set of action ground states at frequency $\mu$ by
\begin{equation*}
    \mathcal{G}^{a}_{c,\mu}=\left\{u\in\mathcal{X}\setminus\{0\}: u\text{ is a solution of \eqref{problem}, }E_c(u)=m_c\right\}.
\end{equation*}
We reveal the relationship between action and energy ground states as follows.
\begin{theorem}[\textbf{Action-energy ground state correspondence}]\label{result6}
    Let $\rho>0$. For every $c\geq1$,
    $e_c(\rho)>-\infty$ is attained 
    and 
    \begin{equation*}
       \mathcal{G}^{e}_{c,\rho}=\alpha_{c,\rho} \mathcal{G}^{a}_{c,\mu},\quad \alpha_{c,\rho}
      =
      \left(\frac{\rho}{4m_c}\right)^\frac{1}{2}.
    \end{equation*}
    Moreover, every function $u\in\mathcal{G}^{e}_{c,\rho}$ is a weak solution of 
    \begin{equation}\label{equation-energy}
        (P_c(D)+\lambda_{c,\rho})u=u\log|u|\quad\text{in }\mathbb{R}^N,
    \end{equation}
    where
    \begin{equation*}
        \lambda_{c,\rho}
        =
       \mu+\log\alpha_{c,\rho}
        =
       \mu+\frac{1}{2}\log\left(\frac{\rho}{4m_c}\right).
    \end{equation*}
    Consequently, energy ground states inherit the sign, symmetry, radial monotonicity, regularity, and decay properties established for action ground states.
\end{theorem}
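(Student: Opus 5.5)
The plan is to exploit the amplitude--frequency scaling $\log|\alpha u|=\log\alpha+\log|u|$ to turn the mass-constrained problem \eqref{energygroundstatelevel} into the Nehari-type characterization of $m_c$. Two elementary identities drive the argument. Write $Q_c(u)=\int(P_c(\xi)+\mu)|\widehat u|^2\,d\xi$ and $D(u)=Q_c(u)-\int u^2\log|u|\,dx$. First, for $\alpha>0$,
\[
D(\alpha u)=\alpha^2 D(u)-\alpha^2\log\alpha\,|u|_2^2 .
\]
Second, every nontrivial solution of \eqref{problem} satisfies the Nehari identity $D(u)=0$, and therefore
\[
E_c(u)=\tfrac14|u|_2^2 .
\]
Consequently every action ground state has mass exactly $4m_c$.

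\textbf{Step 1 (Nehari minimization).} For $u\in\mathcal X\setminus\{0\}$ there is a unique $t_u>0$ with $D(t_uu)=0$, namely $\log t_u=D(u)/|u|_2^2$. This gives
\[
E_c(t_uu)=\tfrac14 t_u^2|u|_2^2=\tfrac14|u|_2^2\exp\!\big(2D(u)/|u|_2^2\big).
\]
I would use the earlier results (the existence proof via power-law approximation, Theorem \ref{result1}) to identify $m_c$ with $\inf\{E_c(t_uu):u\neq0\}$. One inequality is immediate, since solutions lie on the Nehari set. For the other inequality I need that every minimizer over the Nehari set is a critical point, and this is the main obstacle because $E_c$ is not $C^1$ on $\mathcal X$. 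To handle it, I would take a minimizer $w$ and test with perturbations $w+s\varphi$, $\varphi\in C_c^\infty$, renormalized by $t_{w+s\varphi}$. The map $s\mapsto\int (w+s\varphi)^2\log|w+s\varphi|$ is differentiable at $s=0$ because $\varphi$ has compact support and $v\mapsto v^2\log|v|$ is $C^1$. Hence the one-sided derivatives vanish, which gives the weak equation tested against $C_c^\infty$, and density then yields that $w$ is a solution.

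\textbf{Step 2 (computing $e_c(\rho)$).} On the sphere $|u|_2^2=\rho$,
\[
\mathcal H_c(u)=E_c(u)-\tfrac{\mu}{2}\rho=\tfrac12D(u)+\tfrac14\rho-\tfrac\mu2\rho .
\]
So minimizing $\mathcal H_c$ on the sphere is equivalent to minimizing $D$ there, which in turn is equivalent to minimizing $E_c(t_uu)=\frac{\rho}{4}e^{2D(u)/\rho}$, a monotone function of $D$. Step 1 then gives
\[
\inf_{|u|_2^2=\rho}D=\tfrac\rho2\log\tfrac{4m_c}{\rho},
\qquad
e_c(\rho)=\tfrac\rho4\log\tfrac{4m_c}{\rho}+\tfrac\rho4-\tfrac\mu2\rho>-\infty .
\]

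\textbf{Step 3 (attainment and the set identity).} For $v\in\mathcal G^a_{c,\mu}$ we have $|v|_2^2=4m_c$, so $u=\alpha_{c,\rho}v$ has mass $\rho$. By the scaling identity, $D(u)=-\alpha^2\log\alpha\cdot 4m_c$, which equals the infimum from Step 2; hence $u\in\mathcal G^e_{c,\rho}$ and $e_c(\rho)$ is attained. Conversely, if $u\in\mathcal G^e_{c,\rho}$, then $t_uu$ minimizes $E_c$ over the Nehari set, so by Step 1 it is an action ground state, and therefore $|t_uu|_2^2=4m_c$. This forces $t_u=\alpha_{c,\rho}^{-1}$, so $u\in\alpha_{c,\rho}\mathcal G^a_{c,\mu}$.

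\textbf{Step 4 (equation and inherited properties).} If $v$ solves \eqref{problem} and $u=\alpha v$, then
\[
(P_c(D)+\mu)u=u\log|u|-u\log\alpha ,
\]
which is exactly \eqref{equation-energy} with $\lambda_{c,\rho}=\mu+\log\alpha_{c,\rho}$. The sign, symmetry, radial monotonicity, regularity and decay properties then transfer directly from Theorem \ref{result3}, since multiplication by a positive constant preserves all of them.
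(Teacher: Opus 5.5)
Your proposal is correct and follows essentially the paper's argument: the logarithmic scaling identity, the Nehari identity $E_c=\frac14|u|_2^2$ on solutions, and the renormalized perturbation $t(s)(w+s\varphi)$ showing that minimizers on the Nehari set are weak solutions (Lemma \ref{groundstate-Neharilevel}). Your explicit relation $E_c(t_uu)=\frac{\rho}{4}e^{2D(u)/\rho}$ on the sphere does the work of the paper's comparison argument for $\mathcal G^{e}_{c,4m_c(\lambda)}=\mathcal G^{a}_{c,\lambda}$ combined with the frequency scaling of Lemma \ref{frequencyscaling}. One point to tighten: in Step 1 the critical-point property does not by itself yield $\inf_{u\neq0}E_c(t_uu)\ge m_c$, because you never show that the infimum over the nonlocal Nehari set is attained; the paper instead uses the harmonic extension $U$ of $u$, where the equality case of Lemma \ref{norms} gives $J_c(U)=D(u)$ and $I_c(U)=E_c(u)$, so this infimum is bounded below by the extension Nehari level $\inf_{\mathcal N_c}I_c$, which the proof of Theorem \ref{result1} identifies with $m_c$ (this lower bound is also what gives $e_c(\rho)>-\infty$), and the critical-point lemma is really needed only in the converse inclusion of Step 3.
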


   \begin{theorem}\label{result7}
       Fix $\rho>0$. For each $c\geq1$, let $u_{c,\rho}$ be a positive energy ground state of \eqref{equation-energy}. Then the following assertions hold:
       \begin{enumerate}
       \item (Nonrelativistic limit) After suitable translations and passage to a subsequence,
        \begin{equation*}
            u_{c,\rho}\to\mathfrak{g}_{\rho}\quad\text{in }H^1(\mathbb{R}^N)\quad\text{as }c\to\infty,
        \end{equation*}
        where
    \begin{equation*}
        \mathfrak{g}_\rho(x)
          =
         \rho^\frac{1}{2}
          \left(\frac{m}{\pi}\right)^\frac{N}{4}
          e^{-\frac{m}{2}|x|^2},\quad |\mathfrak{g}_\rho|_2^2=\rho.
    \end{equation*}
    The corresponding Lagrange multipliers satisfy
    \begin{equation*}
         \lambda_{c,\rho}
        \to
        \lambda_{\infty,\rho}
          =
            \frac{1}{2}\log\rho
           +\frac{N}{4}\log\left(\frac{m}{\pi}\right)
         -\frac{N}{2}.
    \end{equation*}
     The energy ground-state level is
    \begin{equation*}
        e_c(\rho)
       =
      \frac{\rho}{4}
      -\frac{\rho}{2}\lambda_{c,\rho}
      =
       \frac{\rho}{4}
         -\frac{\mu\rho}{2}
        -\frac{\rho}{4}
       \log\left(\frac{\rho}{4m_c}\right),
    \end{equation*}
    and
    \begin{equation*}
        e_c(\rho)
      \to
       e_\infty(\rho)
       =
       \frac{\rho}{4}
      \left(N+1-\log\rho-\frac{N}{2}\log\left(\frac{m}{\pi}\right)\right)
       \quad\text{as }c\to\infty.
    \end{equation*}
    \item (Uniqueness)  
   For every $c\geq c_*$, the energy ground state of mass $\rho$ is unique up to translations and a change of sign. 
    \end{enumerate}
   \end{theorem}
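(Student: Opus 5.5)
The argument transfers every statement through the exact scaling correspondence of Theorem \ref{result6}. Fix $\rho>0$ and a positive energy ground state $u_{c,\rho}$. By Theorem \ref{result6}, $u_{c,\rho}=\alpha_{c,\rho}u_c$ for some positive action ground state $u_c\in\mathcal{G}^a_{c,\mu}$, with $\alpha_{c,\rho}=(\rho/4m_c)^{1/2}$. Each assertion for $u_{c,\rho}$ therefore reduces to the corresponding assertion for $u_c$, together with the asymptotics of the scalar $\alpha_{c,\rho}$.

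\textbf{Nonrelativistic limit.} Theorem \ref{result2} gives $m_c\to m_\infty=\frac14|\mathfrak g|_2^2$, so
\[
\alpha_{c,\rho}\to\alpha_{\infty,\rho}=\Big(\frac{\rho}{|\mathfrak g|_2^2}\Big)^{1/2}.
\]
The same theorem gives $u_c\to\mathfrak g$ in $H^1(\mathbb{R}^N)$ after translations and along a subsequence. Since the product of a convergent scalar and a convergent $H^1$ sequence converges, $u_{c,\rho}\to\alpha_{\infty,\rho}\mathfrak g$ in $H^1$.

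We then check that $\alpha_{\infty,\rho}\mathfrak g=\mathfrak g_\rho$. Both are multiples of $e^{-\frac m2|x|^2}$ with $L^2$ mass $\rho$, so they coincide. For the Lagrange multiplier,
\[
\lambda_{c,\rho}=\mu+\log\alpha_{c,\rho}\to\mu+\tfrac12\log\rho-\tfrac12\log\Big(e^{2\mu+N}\big(\tfrac{\pi}{m}\big)^{N/2}\Big)=\tfrac12\log\rho+\tfrac N4\log\tfrac m\pi-\tfrac N2 ,
\]
which is $\lambda_{\infty,\rho}$.

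For the energy level, $u=u_{c,\rho}$ solves \eqref{equation-energy}. Testing the equation with $u$ itself, which is allowed because $u\in\mathcal X$ and is a weak solution, gives
\[
\int P_c(\xi)|\hat u|^2\,d\xi+\lambda_{c,\rho}\rho=\int u^2\log|u|\,dx .
\]
Substituting this into $\mathcal H_c$ yields
\[
e_c(\rho)=\mathcal H_c(u)=\tfrac12\int u^2\log|u|\,dx-\tfrac12\lambda_{c,\rho}\rho+\tfrac\rho4-\tfrac12\int u^2\log|u|\,dx=\tfrac\rho4-\tfrac\rho2\lambda_{c,\rho}.
\]
Inserting the formula for $\lambda_{c,\rho}$ gives the displayed expression for $e_c(\rho)$. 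Passing to the limit with $\lambda_{c,\rho}\to\lambda_{\infty,\rho}$ gives
\[
e_\infty(\rho)=\tfrac\rho4-\tfrac\rho2\lambda_{\infty,\rho}=\tfrac\rho4\Big(1-\log\rho-\tfrac N2\log\tfrac m\pi+N\Big),
\]
which matches the statement.

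\textbf{Uniqueness.} Take two energy ground states $u,v$ of mass $\rho$ with $c\ge c_*$. By Theorem \ref{result6}, $u=\alpha_{c,\rho}\tilde u$ and $v=\alpha_{c,\rho}\tilde v$ with $\tilde u,\tilde v\in\mathcal G^a_{c,\mu}$. Both $\tilde u$ and $\tilde v$ have action $m_c<2m_c$. Theorem \ref{result4} then gives $\tilde u=\sigma\tilde v(\cdot-x_0)$, and multiplying by the common factor $\alpha_{c,\rho}$ gives $u=\sigma v(\cdot-x_0)$.

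The only delicate point is the identity $\int P_c|\hat u|^2+\lambda\rho=\int u^2\log|u|$, which requires $u$ to be an admissible test function for its own equation. This should follow from $u\in\mathcal X$ together with the weak formulation in Theorem \ref{result6}. Alternatively, it can be avoided altogether by using the scaling identity $\mathcal H_c(\alpha w)=\alpha^2\big(E_c(w)-\frac\mu2|w|_2^2\big)-\frac{\alpha^2\log\alpha}{2}|w|_2^2$ applied to $w=u_c$. This requires $E_c(u_c)=m_c$ and $|u_c|_2^2=4m_c$, where the latter is the Nehari-type identity implicit in the formula for $\alpha_{c,\rho}$.
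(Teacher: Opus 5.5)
Your proposal is correct and follows essentially the same route as the paper: write $u_{c,\rho}=\alpha_{c,\rho}u_c$ with $u_c\in\mathcal{G}^a_{c,\mu}$, transfer the $H^1$ limit and uniqueness from Theorems \ref{result2} and \ref{result4}, and read off $\lambda_{c,\rho}$ and $e_c(\rho)$ from the convergence $m_c\to m_\infty$. The ``delicate point'' you flag does not arise in the paper, which gets the Nehari identity directly from the correspondence $u_{c,\rho}\in\mathcal{G}^a_{c,\lambda_{c,\rho}}\subset\mathcal{N}_{c,\lambda_{c,\rho}}$, giving $\mathcal{S}_{c,\lambda_{c,\rho}}(u_{c,\rho})=\frac14\mathcal{M}(u_{c,\rho})=\frac{\rho}{4}$ by construction, so there is no need to test the equation with $u_{c,\rho}$ itself.
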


Now we investigate the nondegeneracy of energy ground states.
For $c\geq1$ and fixed $\rho>0$, let $u_{c,\rho}$ be a positive energy ground state of mass $\rho$.
Likewise, we define the Hilbert space
   \begin{equation*}
       \Sigma_{c,\rho}=\left\{\varphi\in H^{1/2}(\mathbb{R}^N):\int_{\mathbb{R}^N}|\log u_{c,\rho}|\varphi^2\,dx<\infty\right\},
   \end{equation*}
   equipped with the norm
   \begin{equation*}
        \Vert \varphi\Vert_{\Sigma_{c,\rho}}^2=\Vert \varphi\Vert_{H^{1/2}(\mathbb{R}^N)}^2+\int_{\mathbb{R}^N}|\log u_{c,\rho}|\varphi^2\,dx.
   \end{equation*}
  Define the symmetric bilinear form
   \begin{equation*}
       \ell_{c,\rho}(\varphi,\psi)=\int_{\mathbb{R}^N}P_c(\xi)\widehat{\varphi}(\xi)\overline{\widehat{\psi}(\xi)}\,d\xi+\int_{\mathbb{R}^N}(\lambda_{c,\rho}-1-\log u_{c,\rho})\varphi\psi\,dx,\quad\varphi,\psi\in\Sigma_{c,\rho}.
   \end{equation*}
   Set the mass sphere
   \begin{equation*}
       S_\rho=\left\{u\in \Sigma_{c,\rho}:|u|_2^2=\rho\right\},
   \end{equation*}
   and its tangent space at $u_{c,\rho}$ by
   \begin{equation*}
       T_{u_{c,\rho}}S_\rho=\left\{\varphi\in\Sigma_{c,\rho}:\int_{\mathbb{R}^N}u_{c,\rho}\varphi\,dx=0\right\}.
   \end{equation*}
   Define the constrained linearized operator $ \mathcal{L}_{c,\rho}:T_{u_{c,\rho}}S_\rho\to \left(T_{u_{c,\rho}}S_\rho\right)^*$
   by
   \begin{equation*}
       \left\langle\mathcal{L}_{c,\rho}\varphi,\psi\right\rangle_{\left(T_{u_{c,\rho}}S_\rho\right)^*,T_{u_{c,\rho}}S_\rho}=\ell_{c,\rho}(\varphi,\psi),\quad\varphi,\psi\in\ T_{u_{c,\rho}}S_\rho.
   \end{equation*}
  % Its kernel is
  % \begin{equation*}
  %     \text{Ker}(\mathcal{L}_{c,\rho})=\left\{\varphi\in T_{u_{c,\rho}}S_\rho:\ell_{c,\rho}(\varphi,\psi)=0\text{ for every }\psi\in T_{u_{c,\rho}}S_\rho\right\}.
  % \end{equation*}
   We call $u_{c,\rho}$ nondegenerate if 
   \begin{equation*}
       \text{Ker}(\mathcal{L}_{c,\rho})
        =
     \text{span}
     \left\{
      \partial_{x_1}u_{c,\rho},
       \ldots,
       \partial_{x_N}u_{c,\rho}
      \right\}.
   \end{equation*}
   \begin{theorem}[\textbf{Nondegeneracy of energy ground state}]\label{result8}
   Fix $\rho>0$. For every $c\geq c^*$, the positive energy ground state $u_{c,\rho}$ is nondegenerate.
   \end{theorem}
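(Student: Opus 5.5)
The plan is to transfer the nondegeneracy of action ground states in Theorem \ref{result5} to the constrained setting through the scaling correspondence of Theorem \ref{result6}.

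\textbf{Step 1: reduce to the action ground state.} By Theorem \ref{result6}, write $u_{c,\rho}=\alpha u_c$ with $\alpha=\alpha_{c,\rho}$, where $u_c$ is a positive action ground state at frequency $\mu$. Since $\log u_{c,\rho}=\log\alpha+\log u_c$, the weights $|\log u_{c,\rho}|$ and $|\log u_c|$ differ by a bounded amount. Hence $\Sigma_{c,\rho}=\Sigma_c$ with equivalent norms. Moreover, $\lambda_{c,\rho}=\mu+\log\alpha$ gives
\[
\lambda_{c,\rho}-1-\log u_{c,\rho}=\mu-1-\log u_c ,
\]
so $\ell_{c,\rho}=\ell_c$ on $\Sigma_c\times\Sigma_c$. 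The tangent space $T_{u_{c,\rho}}S_\rho$ is the $L^2$-orthogonal complement of $u_c$ inside $\Sigma_c$.

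\textbf{Step 2: the inclusion $\supseteq$.} Translation invariance gives $\partial_{x_i}u_c\in\operatorname{Ker}\mathcal L_c$; this is already contained in Theorem \ref{result5}. By radial symmetry and oddness, $\int u_c\,\partial_{x_i}u_c\,dx=0$, so $\partial_{x_i}u_{c,\rho}=\alpha\,\partial_{x_i}u_c$ lies in $T_{u_{c,\rho}}S_\rho$. It is annihilated by $\ell_{c,\rho}$ against all $\psi$, in particular against all tangent $\psi$.

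\textbf{Step 3: the inclusion $\subseteq$.} This is the main step. Let $\varphi\in T_{u_{c,\rho}}S_\rho$ satisfy $\ell_c(\varphi,\psi)=0$ for every $\psi$ with $\int u_c\psi=0$. Since $\Sigma_c=T\oplus\operatorname{span}\{u_c\}$, there is a Lagrange multiplier $\beta\in\mathbb R$ such that
\[
\ell_c(\varphi,\psi)=\beta\int u_c\psi\,dx\quad\text{for all }\psi\in\Sigma_c,
\]
that is, $\mathcal L_c\varphi=\beta u_c$ weakly.

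The key identity comes from the scaling structure. Differentiating $(P_c(D)+\mu)(tu_c)=tu_c\log(tu_c)-tu_c\log t$ in $t$ at $t=1$ gives $\mathcal L_c u_c=-u_c$. One can also check directly that
\[
\ell_c(u_c,\psi)=\int\bigl(u_c\log u_c-u_c-u_c\log u_c\bigr)\psi=-\int u_c\psi ,
\]
using the equation tested against $\psi$. Then $\mathcal L_c(\varphi+\beta u_c)=0$, and Theorem \ref{result5} (valid for $c\ge c^*$) yields
\[
\varphi+\beta u_c\in\operatorname{span}\{\partial_{x_i}u_c\}.
\]
Testing against $u_c$ and using $\int u_c\varphi=0$ and $\int u_c\partial_{x_i}u_c=0$ gives $\beta|u_c|_2^2=0$. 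Hence $\beta=0$ and $\varphi\in\operatorname{span}\{\partial_{x_i}u_{c,\rho}\}$.

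\textbf{Expected obstacle.} The delicate points are functional-analytic. First, one must justify that $u_c\in\Sigma_c$, i.e. $\int |\log u_c|u_c^2<\infty$; this follows from $u_c\in\mathcal X$. Second, the Lagrange multiplier step requires $\psi\mapsto\int u_c\psi$ to be a bounded functional on $\Sigma_c$, which holds since $u_c\in L^2$. Third, the weak identity $\ell_c(u_c,\psi)=-\int u_c\psi$ must hold for all $\psi\in\Sigma_c$ and not only for smooth test functions. For this I would use the weak formulation of \eqref{problem} together with density of $C_c^\infty$ in $\Sigma_c$, where $u_c\log u_c\,\psi$ is integrable by Cauchy--Schwarz with the weight $|\log u_c|$.
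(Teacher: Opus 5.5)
Your proposal is correct and follows essentially the paper's argument. You identify $\Sigma_{c,\rho}=\Sigma_c$ and $\ell_{c,\rho}=\ell_c$, use the identity $\ell_c(u_c,\psi)=-\int u_c\psi\,dx$ with the splitting $\Sigma_c=T_{u_{c,\rho}}S_\rho\oplus\text{span}\{u_c\}$ to reduce to Theorem \ref{result5}, and check $\int u_c\,\partial_{x_i}u_c\,dx=0$ for the reverse inclusion. The only cosmetic difference is that the paper kills your multiplier $\beta$ before invoking Theorem \ref{result5}, using the symmetry of $\ell_c$: $\beta|u_c|_2^2=\ell_c(\varphi,u_c)=-\int u_c\varphi\,dx=0$, so $\varphi\in\text{Ker}(\mathcal{L}_c)$ directly.
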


The main results reveal a connection between the ground states of the pseudo-relativistic Schr\"odinger  and classical Schr\"odinger equations. By analyzing the nonrelativistic limit, we prove that action ground states converge to the Gausson as $c\to\infty$. This convergence provides the basis for establishing the uniqueness and nondegeneracy of action ground states for all sufficiently large $c$. Moreover, the scaling structure of the logarithmic nonlinearity yields a correspondence between action and energy ground states, allowing us to study them within a unified framework. 

The proof for existence relies on a power-law approximation, but such procedure does not work for nonrelativistic limit. In the study of nonrelativistic limit, an essential step is upgrading a uniform $H^{1/2}(\mathbb{R}^N)$ bound to a uniform $H^1(\mathbb{R}^N)$ bound. In the power-law setting, Choi and Seok \cite{ChoiSeok2016} achieved this upgrade through an $L^q$ estimate with $q>2$. For the logarithmic nonlinearity, one can not obtain this kind of estimate. We instead establish exponential decay of the ground states uniformly in $c$ to realize such upgrade.
The logarithmic term also challenges the proof of uniqueness and nondegeneracy. For the pseudo-relativistic Hartree equation, Lenzmann \cite{Lenzmann2009} obtained local uniqueness using the implicit function theorem. In our setting, the singular behavior of the logarithmic term prevents a direct application of that method. We establish uniqueness through a normalization argument that combines local estimates with the nondegeneracy of the Gausson.
 
The paper is organized as follows. 
Section 2 introduces the variational setting, the local extension problem, and the power-law approximation.
In Section 3, we construct action ground states and establish their qualitative properties.
Section 4 is devoted to the nonrelativistic limit, together with the uniqueness and nondegeneracy of positive action ground states for sufficiently large $c$.
Finally, in Section 5, we establish the exact action-energy correspondence and apply it to energy ground states of arbitrary prescribed mass.

\subsectionnotoc{Notation} Throughout this paper, we use the following notations.
\begin{itemize}
	\item  For $1\leq p\leq\infty$, $|\cdot|_p$ denotes the norm in $L^p(\mathbb{R}^N)$;
    \item  $\langle\cdot,\cdot\rangle$ denotes the real $L^2(\mathbb{R}^N)$ inner product;
    \item $2^*$ denotes the Sobolev critical exponent for $H^1(\mathbb{R}^N)$, with $2^*=\frac{2N}{N-2}$ for $N\geq3$ and $2^*=\infty$ for $N=2$;
    \item For a constant $\alpha>0$ and a set $V$ in a vector space, we set
    \begin{equation*}
        \alpha V=\left\{\alpha v:v\in V\right\};
    \end{equation*}
	\item The symbols $\to$ and $\rightharpoonup$ denote strong and weak convergence, respectively, in the relevant space;
	\item $C, C_1, C_2, \ldots$ denote positive constants whose values may vary from line to line;
	\item $B_R(y)$ denotes the open ball in $\mathbb{R}^N$ with radius $R$ centered at $y$. In particular, $B_R:=B_R(0)$;
    \item $u^+=\max\{u, 0\}$ and $u^-=\max\{-u, 0\}$ denote the positive and negative parts of a function $u$.
\end{itemize}

\section{Preliminaries}
In this section, we introduce the variational framework and the extension problem. We also collect the preliminary
results on the power-law approximation and the logarithmic nonlinearity that will be used below.
\subsection{Variational setting and extension problem}
\leavevmode\par
Fix $m, \mu>0$, and $c\geq1$. Set
\begin{equation*}
    P_c(D):=\sqrt{-c^2\Delta+m^2c^4}-mc^2
\end{equation*}
and denote its symbol by
\begin{equation*}
    P_c(\xi):=\sqrt{c^2|\xi|^2+m^2c^4}-mc^2.
\end{equation*}
In the sequel, for $s\geq0$,
the Sobolev space $H^s(\mathbb{R}^N)$ is defined by
\begin{equation}\label{Hsnorm}
    H^s(\mathbb{R}^N):=
\left\{
u\in L^2(\mathbb{R}^N):
\int_{\mathbb{R}^N}
(1+|\xi|^2)^s|\widehat{u}(\xi)|^2\,d\xi<\infty
\right\},
\end{equation}
endowed with the norm
\begin{equation*}
    \Vert u\Vert_{H^s(\mathbb{R}^N)}
:=
\left(
\int_{\mathbb{R}^N}
(1+|\xi|^2)^s|\widehat{u}(\xi)|^2\,d\xi
\right)^{1/2}.
\end{equation*}
Following \cite{ChoiSeok2016}, we endow the Sobolev space
\begin{equation*}
    H^1(\mathbb{R}^{N+1}_+):=\left\{U\in L^2(\mathbb{R}^{N+1}_+):\nabla U\in L^2(\mathbb{R}^{N+1}_+)\right\}
\end{equation*}
with the  equivalent norm
\begin{equation*}
   \Vert U\Vert_{H^1(\mathbb{R}^{N+1}_+)}^2:=\int_{\mathbb{R}^{N+1}_+}
   \left(c^2|\nabla U(x,y)|^2+m^2c^4U(x,y)^2\right)\,dx\,dy+c(-mc^2+\mu)\int_{\mathbb{R}^N}U(x,0)^2\,dx.
\end{equation*}
Denote by $H_r^{1/2}(\mathbb{R}^N)$ the subspace of radial functions in $H^{1/2}(\mathbb{R}^N)$. The following embedding theorem is well-known in the literature. 

\begin{lemma}\label{Sobolevembedding}
    For every $p\in\left[2,\frac{2N}{N-1}\right]$, the embedding $H^{1/2}(\mathbb{R}^N)\hookrightarrow L^p(\mathbb{R}^N)$ is continuous. Moreover, for every $p\in\left(2,\frac{2N}{N-1}\right)$, the embedding $H_r^{1/2}(\mathbb{R}^N)\hookrightarrow L^p(\mathbb{R}^N)$ is compact.
\end{lemma}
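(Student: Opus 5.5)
The plan is to derive the continuous embedding from the Sobolev inequality in Fourier form, and the radial compactness from a Strauss-type decay estimate combined with local compactness. I will use the norm \eqref{Hsnorm} with $s=1/2$.

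\textbf{Continuous embedding.} The case $p=2$ is trivial, since $|u|_2^2=\int|\widehat u|^2\le\Vert u\Vert_{H^{1/2}}^2$ by Plancherel. At the endpoint $p=2_{1/2}^*:=\frac{2N}{N-1}$ I will invoke the sharp fractional Sobolev inequality $|u|_{2N/(N-1)}\le S_N|(-\Delta)^{1/4}u|_2$. This holds for $N\ge2>2s=1$, and the right-hand side equals a constant times $\left(\int|\xi||\widehat u|^2\,d\xi\right)^{1/2}\le C\Vert u\Vert_{H^{1/2}}$.

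To justify the endpoint inequality I will follow the standard route. One writes $u=I_{1/2}*g$, where $I_{1/2}$ is the Riesz potential of order $1/2$ and $\widehat g=|\xi|^{1/2}\widehat u$. The Hardy--Littlewood--Sobolev inequality then gives $|I_{1/2}g|_{q}\le C|g|_2$ with $\frac1q=\frac12-\frac1{2N}$, i.e. $q=\frac{2N}{N-1}$. Alternatively, one simply cites this as classical.

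Intermediate exponents $p\in(2,\frac{2N}{N-1})$ follow by H\"older interpolation, $|u|_p\le|u|_2^{1-\theta}|u|_{2N/(N-1)}^{\theta}$.

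\textbf{Compactness for radial functions.} Let $u_n\rightharpoonup0$ in $H^{1/2}_r(\mathbb R^N)$, bounded by $M$. I aim to show $|u_n|_p\to0$ for $p\in(2,\frac{2N}{N-1})$, in three steps.

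First, local compactness. On every ball $B_R$ the embedding $H^{1/2}(B_R)\hookrightarrow L^2(B_R)$ is compact. One way to see this is Rellich--Kondrachov for fractional spaces. Another is the Fourier criterion: using a cutoff $\chi\in C_c^\infty$, the sequence $\chi u_n$ is bounded in $H^{1/2}$ and supported in a fixed ball. Its Fourier transforms are equicontinuous and converge pointwise to $0$, while the high frequencies are controlled by $\int_{|\xi|>K}|\widehat{\chi u_n}|^2\le K^{-1}\Vert\chi u_n\Vert_{H^{1/2}}^2$. Hence $u_n\to0$ in $L^2(B_R)$, and by interpolation with the uniform $L^{2N/(N-1)}$ bound also in $L^p(B_R)$.

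Second, a uniform tail estimate outside $B_R$, which is the main obstacle. The Strauss pointwise bound $|u(x)|\le C|x|^{-(N-1)/2}\Vert u\Vert_{H^1}$ requires $H^1$, which is not available here. I will instead use Lions' lemma, which is the cleanest route. Radial symmetry implies that, for any $|y|\ge 2R$, one can place about $c(|y|/R)^{N-1}$ disjoint unit balls centered on the sphere $\{|x|=|y|\}$, all carrying the same $L^2$ mass as $B_1(y)$. Summing over these balls gives
\begin{equation*}
\sup_{|y|\ge 2R}\int_{B_1(y)}u_n^2\,dx\le C R^{-(N-1)}M^2.
\end{equation*}
Combined with the local convergence on $B_{2R+1}$, this yields $\sup_y\int_{B_1(y)}u_n^2\to0$.

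Third, vanishing. Lions' vanishing lemma in $H^{1/2}$ gives $u_n\to0$ in $L^p$ for every $p\in(2,\frac{2N}{N-1})$. Its proof covers $\mathbb R^N$ by unit cubes $Q$ and applies the local estimate
\begin{equation*}
|u|_{L^p(Q)}^p\le C|u|_{L^2(Q)}^{p(1-\theta)}\Vert u\Vert_{H^{1/2}(Q)}^{p\theta}.
\end{equation*}
With $\theta$ chosen so that $p\theta=2$, which is possible since $p>2$, summing over the cubes closes the argument. Since the paper calls this lemma well known, I would also remark that these results appear in the literature, for example through Lions' radial compactness for $H^s$ with $s>0$.
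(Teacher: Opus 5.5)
The paper gives no proof of this lemma. It is quoted as well known and used as a black box, mainly the compact radial embedding in the nonvanishing steps of the proofs of Theorems \ref{result1} and \ref{result2}. Your argument is a correct, self-contained proof, and the route suits the critical order $s=1/2$. For continuity you use Hardy--Littlewood--Sobolev plus H\"older interpolation. For compactness you rightly avoid Strauss-type pointwise bounds, which are unavailable for radial $H^{1/2}$ functions. Instead you combine three ingredients: local compactness; a ball-counting tail estimate that uses only rotation invariance, $N\geq2$ and the $L^2$ bound; and Lions' vanishing lemma. This is the standard argument from the literature.

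Two slips need fixing.

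First, the number of disjoint unit balls centred on $\{|x|=|y|\}$ is of order $|y|^{N-1}$, not $(|y|/R)^{N-1}$. With your count you would only get $C(R/|y|)^{N-1}M^2\leq CM^2$, which is not small. The correct count gives $\int_{B_1(y)}u_n^2\,dx\leq C|y|^{-(N-1)}M^2\leq CR^{-(N-1)}M^2$ for $|y|\geq2R$, as you intended.

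Second, $\theta$ is not a free parameter. Interpolating between $L^2$ and $L^{2N/(N-1)}$ forces $p\theta=N(p-2)$, so $p\theta=2$ holds only at $p=2+\frac{2}{N}$. Prove $u_n\to0$ in that $L^p$ first. Then reach the remaining exponents in $\left(2,\frac{2N}{N-1}\right)$ by interpolating with the uniform $L^2$ and $L^{2N/(N-1)}$ bounds. In that step, take $\Vert u\Vert_{H^{1/2}(Q)}$ to be the Gagliardo norm over $Q\times Q$. Then $\sum_Q\Vert u\Vert_{H^{1/2}(Q)}^2\leq C\Vert u\Vert_{H^{1/2}(\mathbb{R}^N)}^2$, and the Sobolev constant on $Q$ does not depend on the position of the cube.
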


To state the nondegeneracy of the Gausson $\mathfrak{g}$, we introduce the Hilbert space
\begin{equation*}
    \Sigma_\infty
    :=
     \left\{
    \varphi\in H^1(\mathbb R^N):
     |x|\varphi\in L^2(\mathbb R^N)
     \right\},
\end{equation*}
equipped with the norm
 \begin{equation*}
     \Vert\varphi\Vert_{\Sigma_\infty}^2
     :=
    \Vert\varphi\Vert_{H^1(\mathbb R^N)}^2
     +
    \int_{\mathbb{R}^N}|x|^2\varphi^2\,dx.
 \end{equation*}
We define the symmetric bilinear form
\begin{equation*}
    \ell_\infty(\varphi,\psi)
:=
\frac{1}{2m}
\int_{\mathbb{R}^N}
\nabla\varphi\cdot\nabla\psi\,dx
+
\int_{\mathbb{R}^N}
\left(
\frac{m}{2}|x|^2-\frac{N}{2}-1
\right)\varphi\psi\,dx,\quad \varphi,\psi\in\Sigma_\infty.
\end{equation*}
The associated linearized operator $\mathcal{L}_\infty:
    \Sigma_\infty\to\Sigma_\infty^*$
is defined by
\begin{equation*}
    \left\langle
\mathcal{L}_\infty\varphi,\psi
\right\rangle_{\Sigma_\infty^*,\Sigma_\infty}
:=
\ell_\infty(\varphi,\psi),
\quad
\varphi,\psi\in\Sigma_\infty.
\end{equation*}
The following lemma states the nondegeneracy of the Gausson, see \cite[Theorem 1.3]{dAvenia2014} and \cite[Lemma 4.4]{ArdilaSquassina2018}.
\begin{lemma}
    The kernel of $\mathcal{L}_\infty$ is given by
\end{lemma}
\begin{equation*}
    \text{Ker}(\mathcal{L}_\infty)=\text{span}\{\partial_{x_1}\mathfrak{g},\ldots,\partial_{x_N}\mathfrak{g}\}.
\end{equation*}

For $u\in H^{1/2}(\mathbb R^N)$, let $U\in H^1(\mathbb R^{N+1}_+)$
be the unique solution of 
\begin{equation}\label{boundaryvalueproblem}
	\left\{
	\begin{aligned}
		&\left(-c^2\Delta_{x,y}+m^2c^4\right)U(x,y)=0 &&\text{in}~\mathbb{R}^{N+1}_+,\\
		&U(x,0)=u(x)&&\text{in}~\mathbb{R}^{N}.
	\end{aligned}
	\right.
\end{equation}
The corresponding Dirichlet-to-Neumann identity reads
\begin{equation*}
    -c\frac{\partial U}{\partial y}(\cdot,0)=\sqrt{-c^2\Delta+m^2c^4}u,\quad x\in\mathbb{R}^N.
\end{equation*}
This identity allows us to reformulate the nonlocal equation \eqref{problem} as the local boundary value problem
%Consequently, if $u\in H^{1/2}(\mathbb{R}^N)$ is a solution of \eqref{problem}, then its extension $U$ satisfies
\begin{equation}\label{boundaryvalueproblem1}
	\left\{
	\begin{aligned}
		&\left(-c^2\Delta_{x,y}+m^2c^4\right)U(x,y)=0 &&\text{in}~\mathbb{R}^{N+1}_+,\\
		&-c\frac{\partial U}{\partial y}(x,0)=(mc^2-\mu)u+u\log |u|&&\text{in}~\mathbb{R}^N.
	\end{aligned}
	\right.
\end{equation}
To formulate this correspondence precisely, set
\begin{equation*}
    \mathcal{X}:=\left\{u\in H^{1/2}(\mathbb{R}^N):\int_{\mathbb{R}^N}u^2|\log |u||\,dx<\infty\right\},\quad   \mathcal{D}:=\left\{U\in H^1(\mathbb{R}^{N+1}_+):U(\cdot,0)\in\mathcal{X}\right\}.
\end{equation*}

We use the following definition of the weak solutions to \eqref{problem} and \eqref{boundaryvalueproblem1} as in
\cite{WangZhang2019,TanakaZhang2017}.
\begin{definition}
    We say that $u\in\mathcal X$ is a weak solution of \eqref{problem} if
    \begin{equation*}
            \int_{\mathbb{R}^N}
    \left(\sqrt{c^2|\xi|^2+m^2c^4}-mc^2+\mu\right)
    \widehat{u}(\xi)\,\overline{\widehat\varphi(\xi)}\,d\xi=
    \int_{\mathbb{R}^N}u\varphi\log|u|\,dx
    \end{equation*}
    for every $\varphi\in C_c^\infty(\mathbb{R}^N)$.
    
    Similarly, we say that $U\in\mathcal{D}$, with trace $u=U(\cdot,0)$, is a weak solution of \eqref{boundaryvalueproblem1} if
    \begin{equation*}
            \frac{1}{c}\int_{\mathbb R^{N+1}_+}
      \left(c^2\nabla U\cdot\nabla\Phi+m^2c^4U\Phi\right)\,dx\,dy+(-mc^2+\mu)\int_{\mathbb{R}^N}u\phi\,dx=
      \int_{\mathbb R^N}
      u\phi\log |u|\,dx
    \end{equation*}
    for every $\Phi\in C_c^\infty(\overline{\mathbb R^{N+1}_+})$,
    where $\phi=\Phi(\cdot,0)$.
\end{definition}
Every weak solution $U\in\mathcal{D}$ of \eqref{boundaryvalueproblem1} satisfies the interior equation in \eqref{boundaryvalueproblem} and hence is the unique extension of its trace.
By the Dirichlet-to-Neumann identity, $u\in\mathcal {X}$ is a weak solution of \eqref{problem} if and only if its extension $U\in\mathcal{D}$ is a weak solution of \eqref{boundaryvalueproblem1}.

\begin{lemma}[\cite{ChoiSeok2016}]\label{norms}
    Let $U\in H^1(\mathbb{R}^{N+1}_+)$ and $u=U(\cdot,0)$. Then 
    \begin{equation*}
        \int_{\mathbb{R}^N}\left(c^2|\xi|^2+m^2c^4\right)^\frac{1}{2}|\widehat{u}(\xi)|^2\,d\xi\leq\frac{1}{c}\int_{\mathbb{R}^{N+1}_+}\left(c^2|\nabla U|^2+m^2c^4U^2\right)\,dx\,dy.
    \end{equation*}
    The equality holds if and only if $U$ satisfies \eqref{boundaryvalueproblem} with boundary value $u$.
\end{lemma}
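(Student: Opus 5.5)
The inequality follows by applying the Fourier transform in $x$ and reducing to a one-dimensional variational problem in $y$ for each fixed frequency. The equality case comes from the uniqueness of the minimizer of a strictly convex problem.

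By density of smooth compactly supported functions in $H^1(\mathbb{R}^{N+1}_+)$ and continuity of the trace, it suffices to treat $U$ smooth with compact support in $\overline{\mathbb{R}^{N+1}_+}$. Write $\widehat U(\xi,y)$ for the partial Fourier transform in $x$. Plancherel gives
\[
\frac1c\int_{\mathbb{R}^{N+1}_+}\left(c^2|\nabla U|^2+m^2c^4U^2\right)dx\,dy=\int_{\mathbb{R}^N}\frac1c\int_0^\infty\left(c^2|\partial_y\widehat U|^2+\left(c^2|\xi|^2+m^2c^4\right)|\widehat U|^2\right)dy\,d\xi .
\]
Fix $\xi$ and set $a=a(\xi)=(c^2|\xi|^2+m^2c^4)^{1/2}/c>0$ and $f(y)=\widehat U(\xi,y)$. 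The inner integral equals $c\int_0^\infty(|f'|^2+a^2|f|^2)\,dy$. The elementary bound $|f'|^2+a^2|f|^2\ge -a\,(|f|^2)'$ follows from $|f'+af|^2\ge0$. Integrating from $0$ to $\infty$ gives
\[
\int_0^\infty(|f'|^2+a^2|f|^2)\,dy\ge a|f(0)|^2 .
\]
Multiplying by $c$ yields the pointwise lower bound $c\,a\,|\widehat u(\xi)|^2=(c^2|\xi|^2+m^2c^4)^{1/2}|\widehat u(\xi)|^2$. Integrating in $\xi$ gives the inequality, and a density argument then extends it to all of $H^1(\mathbb{R}^{N+1}_+)$.

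For the equality case, let $U_0$ denote the solution of \eqref{boundaryvalueproblem}. Its Fourier transform is $\widehat{U_0}(\xi,y)=\widehat u(\xi)e^{-a(\xi)y}$, for which $f'+af=0$, so equality holds pointwise in $\xi$ and hence overall. Conversely, suppose equality holds for $U$. Set $W=U-U_0$, which has zero trace. Expanding the quadratic form, and using that $U_0$ weakly satisfies the interior equation while $W$ vanishes on the boundary, the cross term disappears. This gives
\[
Q(U)=Q(U_0)+Q(W),\qquad Q(V)=\int_{\mathbb{R}^{N+1}_+}\left(c^2|\nabla V|^2+m^2c^4V^2\right),
\]
so $Q(W)=0$ and therefore $W=0$.

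The main technical point is justifying the integration by parts and the vanishing of boundary terms at $y=\infty$ for general $H^1$ functions. I will handle this through the density argument, or by noting that $f\in H^1(0,\infty)$ for a.e. $\xi$, so that $|f(y)|\to0$ as $y\to\infty$.
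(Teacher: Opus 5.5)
Your proof is correct. The paper gives no proof of this lemma and only cites Choi and Seok, where the argument is the same as yours: take the partial Fourier transform in $x$ and use, for each $\xi$, the bound $|\partial_y\widehat U+a\widehat U|^2\ge0$ with $a=\sqrt{|\xi|^2+m^2c^2}$. Your treatment of the equality case through the orthogonal splitting $Q(U)=Q(U_0)+Q(U-U_0)$ is a clean touch: it avoids justifying the pointwise identity $\partial_y\widehat U=-a\widehat U$ for nonsmooth $U$, and it also yields the uniqueness of the $H^1$ extension that the paper takes for granted.
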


 The action functional associated with the local problem \eqref{boundaryvalueproblem1} is $I_c:\mathcal{D}\to\mathbb{R}$, defined by
 \begin{equation}\label{Ic}
\begin{aligned}
    I_c(U)
    &:=
    \frac{1}{2c}
    \int_{\mathbb{R}^{N+1}_+}
    \left(
        c^2|\nabla U(x,y)|^2+m^2c^4U(x,y)^2
    \right)\,dxdy
    +\frac{(-mc^2+\mu)}{2}
    \int_{\mathbb{R}^N}U(x,0)^2\,dx\\
    &\quad+\frac{1}{4}\int_{\mathbb{R}^N}U(x,0)^2\,dx
    -
    \frac{1}{2}\int_{\mathbb{R}^N}U(x,0)^2\log |U(x,0)|\,dx.
\end{aligned}
\end{equation}
The corresponding nonlocal action functional $E_c:\mathcal{X}\to\mathbb{R}$ is given by
\begin{equation}\label{eq:Icp-nonlocal}
    E_c(u)
    :=
    \frac{1}{2}
    \int_{\mathbb R^N}
    \left(
        \sqrt{c^2|\xi|^2+m^2c^4}-mc^2+\mu
    \right)|\widehat u(\xi)|^2\,d\xi
    +\frac{1}{4}\int_{\mathbb{R}^N}u^2\,dx-
    \frac{1}{2}\int_{\mathbb R^N}u^2\log |u|\,dx.
\end{equation}
For every $u\in\mathcal{X}$, let $U$ be its unique extension determined by \eqref{boundaryvalueproblem}. The equality case of Lemma \ref{norms} gives $I_c(U)=E_c(u)$.
Thus, the extension establishes a one-to-one correspondence between weak solutions of the two problems and preserves their action levels. In what follows, we shall work mainly with the local extension problem \eqref{boundaryvalueproblem1}.

An action ground state of the local problem \eqref{boundaryvalueproblem1} is a nontrivial weak solution that minimizes $I_c$ among all nontrivial weak solutions.
\begin{lemma}\label{groundstates}
   Let $U$ be an action ground state of \eqref{boundaryvalueproblem1}. Then its trace $u:=U(\cdot,0)$ is an action ground state of \eqref{problem}. Conversely, let $u$ be an action ground state of \eqref{problem}. Then its unique extension $U$ determined by \eqref{boundaryvalueproblem} is an action ground state of
   \eqref{boundaryvalueproblem1}.
\end{lemma}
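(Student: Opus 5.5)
The argument rests on the bijection between weak solutions of \eqref{problem} in $\mathcal{X}$ and weak solutions of \eqref{boundaryvalueproblem1} in $\mathcal{D}$, together with the identity $I_c(U)=E_c(u)$ whenever $U$ is the extension of its trace. I will first record two facts. (i) If $U\in\mathcal{D}$ is a weak solution of \eqref{boundaryvalueproblem1}, then $U$ solves the interior equation in \eqref{boundaryvalueproblem}. By uniqueness of the Dirichlet problem in $H^1(\mathbb{R}^{N+1}_+)$, $U$ is the extension of $u=U(\cdot,0)$, so the equality case of Lemma \ref{norms} gives $I_c(U)=E_c(u)$. (ii) Conversely, if $u\in\mathcal{X}$ is a weak solution of \eqref{problem}, its extension $U$ lies in $\mathcal{D}$ and is a weak solution of \eqref{boundaryvalueproblem1}, again with $I_c(U)=E_c(u)$. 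Since $U$ has trace $u$, the solution $U$ is nontrivial exactly when $u$ is nontrivial; this holds because $U\equiv0$ iff $u\equiv0$ by uniqueness of the extension.

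Consequently, the map $U\mapsto U(\cdot,0)$ is a bijection from the set of nontrivial weak solutions of \eqref{boundaryvalueproblem1} onto the set of nontrivial weak solutions of \eqref{problem}, with inverse given by extension, and it preserves the action. In particular the two ground-state levels coincide:
\[
\inf\{I_c(U): U\in\mathcal{D}\setminus\{0\}\text{ weak solution of }\eqref{boundaryvalueproblem1}\}=\inf\{E_c(u): u\in\mathcal{X}\setminus\{0\}\text{ weak solution of }\eqref{problem}\}.
\]

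Now let $U$ be an action ground state of \eqref{boundaryvalueproblem1}. Its trace $u$ is a nontrivial solution of \eqref{problem} with $E_c(u)=I_c(U)$. For any other nontrivial solution $v$ of \eqref{problem}, its extension $V$ is a nontrivial solution of \eqref{boundaryvalueproblem1}, so $E_c(v)=I_c(V)\ge I_c(U)=E_c(u)$. Hence $u$ is an action ground state of \eqref{problem}. The converse is symmetric: given a ground state $u$ of \eqref{problem} with extension $U$, any nontrivial solution $W$ of \eqref{boundaryvalueproblem1} has trace $w$ satisfying $I_c(W)=E_c(w)\ge E_c(u)=I_c(U)$.

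The only delicate point is step (i): a weak solution in $\mathcal{D}$ tested only against $\Phi\in C_c^\infty(\overline{\mathbb{R}^{N+1}_+})$ must coincide with the energy-minimizing extension. I will handle this by taking test functions supported in the open half-space to obtain the interior equation weakly. The difference $W=U-\tilde U$ between $U$ and the extension $\tilde U$ of $u$ then lies in $H^1$, has zero trace, and solves $-c^2\Delta W+m^2c^4W=0$ weakly. Testing with $W$ itself, which is legitimate by density of $C_c^\infty(\mathbb{R}^{N+1}_+)$ in $H^1_0(\mathbb{R}^{N+1}_+)$, gives $W=0$. The paper already asserts this uniqueness, so I will cite it rather than reprove it.
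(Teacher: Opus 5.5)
Your proposal is correct and follows the same route as the paper: the trace map is a bijection between nontrivial weak solutions of the local and nonlocal problems, it satisfies $I_c(U)=E_c(u)$, and so minimality transfers in both directions. The paper only asserts that a weak solution in $\mathcal{D}$ is the unique extension of its trace (stated just before the lemma); your $H^1_0$ energy argument supplies a proof of that step rather than changing the approach.
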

\begin{proof}
    The trace map is a bijection between the nontrivial weak solutions of the two problems. For each corresponding pair $(U,u)$, we have $I_c(U)=E_c(u)$. Hence $U$ minimizes $I_c$ among nontrivial weak solutions of \eqref{boundaryvalueproblem1} if and only if $u$ minimizes $E_c$ among nontrivial weak solutions of \eqref{problem}.
\end{proof}

\subsection{Power-law approximation}
\leavevmode\par
We begin by recalling the following existence and symmetry result for the corresponding power-law problem
from Choi and Seok \cite{ChoiSeok2016}.
\begin{lemma}
\label{thm:Choi-Seok}
Let $m,\mu>0$, $c\geq 1$, and $p\in\left(2,\frac{2N}{N-1}\right)$.
The equation
\begin{equation}\label{eq:CS-power}
   \left(P_c(D)+\mu\right) u
    =
    |u|^{p-2}u
    \quad\text{in }\mathbb{R}^N
\end{equation}
admits a positive radial action ground state
   $u\in H^{1/2}(\mathbb{R}^N)$,
which is nonincreasing with respect to $|x|$.
Moreover, every action ground state of \eqref{eq:CS-power} has one sign and is radial up to translations.
\end{lemma}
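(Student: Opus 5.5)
This lemma is quoted from Choi and Seok \cite{ChoiSeok2016}, so the plan is to reconstruct their argument in the extension framework of Subsection 2.1. I will work with the local functional
\begin{equation*}
J_{c,p}(U)=\frac{1}{2c}\int_{\mathbb{R}^{N+1}_+}\left(c^2|\nabla U|^2+m^2c^4U^2\right)\,dx\,dy+\frac{-mc^2+\mu}{2}\int_{\mathbb{R}^N}u^2\,dx-\frac1p\int_{\mathbb{R}^N}|u|^p\,dx
\end{equation*}
on $H^1(\mathbb{R}^{N+1}_+)$, where $u=U(\cdot,0)$. By Lemma \ref{norms}, the quadratic part is bounded below by $\frac12\int(P_c(\xi)+\mu)|\widehat u|^2\,d\xi\ge\frac{\mu}{2}|u|_2^2$. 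It is therefore coercive, equivalent to $\Vert U\Vert^2_{H^1(\mathbb{R}^{N+1}_+)}$, and controls $\Vert u\Vert_{H^{1/2}}$, with constants depending on $c$. Since $p$ is subcritical, Lemma \ref{Sobolevembedding} shows that $J_{c,p}$ is $C^1$ with mountain pass geometry. I will minimize $J_{c,p}$ on the Nehari manifold $\mathcal{N}=\{U\neq0:\langle J_{c,p}'(U),U\rangle=0\}$. The level $\inf_{\mathcal N}J_{c,p}$ coincides with the mountain pass level and is positive.

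\textbf{Existence.} I take a minimizing sequence. I replace each $U_n$ by $|U_n|$, then apply Steiner/Schwarz symmetrization in the $x$-variable for each fixed $y$. The Dirichlet term does not increase under this (Pólya--Szegő), and the $L^2$ and $L^p$ trace norms are preserved. Projecting back onto $\mathcal N$ via $t\mapsto tU$ does not raise the level. The traces are then radial, nonnegative and bounded in $H^{1/2}$. The compact radial embedding of Lemma \ref{Sobolevembedding} with $p\in(2,\frac{2N}{N-1})$ gives $u_n\to u$ in $L^p$. The Nehari identity together with the positive lower bound on the level rules out $u=0$. Weak lower semicontinuity then shows that the limit attains the infimum on $\mathcal N$. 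It is a critical point because $\mathcal N$ is a natural constraint. This gives a nonnegative, radial, nonincreasing action ground state, since the level is the least one among nontrivial solutions.

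\textbf{Positivity.} The trace satisfies $(P_c(D)+\mu)u=u^{p-1}\ge0$. I will write $(P_c(D)+\mu)^{-1}$ as convolution with a positive kernel. This kernel is the Bessel-type kernel of $\sqrt{-c^2\Delta+m^2c^4}-(mc^2-\mu)$, and it is positive because it is given by a Neumann series of positive kernels when $\mu<mc^2$; if $\mu\ge mc^2$ the operator is itself a sum of positive-resolvent operators. Hence $u>0$. Alternatively, the strong maximum principle can be applied to the extension $U$ together with Hopf's lemma.

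\textbf{Every ground state has one sign and is radial up to translation.} I expect this to be the main obstacle. Let $U$ be any ground state. Then $|U|$ lies on $\mathcal N$ at the same level, so it is also a minimizer and hence a solution. The positivity argument gives $|u|>0$ everywhere. Both $u^+$ and $u^-$ solve the Nehari-type relations, so if $u$ changed sign, splitting $u=u^+-u^-$ would produce a level of at least $2\cdot$ the minimum; I will verify this using the sign of the nonlocal cross term $\int P_c(\xi)\widehat{u^+}\overline{\widehat{u^-}}\le 0$. Hence $u$ has one sign. For radial symmetry, the symmetrized function is also a minimizer, so equality must hold in the Pólya--Szegő-type inequality for the extension. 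I will use the strict rearrangement inequality for the nonlocal form, namely equality in the Riesz-type inequality with the strictly decreasing kernel of $e^{-tP_c(D)}$. This forces $u$ to be a translate of its symmetric decreasing rearrangement. The alternative is a moving-plane argument on the extension problem; I would attempt the rearrangement route first.
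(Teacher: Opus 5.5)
\paragraph{Verdict.} The paper does not prove this lemma; it quotes it from Choi--Seok \cite{ChoiSeok2016}, so your reconstruction has to stand on its own. Your existence, positivity and one-sign arguments do stand, but the strictness step for radial symmetry has a gap, described below.

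For comparison, the paper's own analogues for the logarithmic problem argue differently. The sign is handled in Lemma \ref{constantsign} through the splitting $U=U^+-U^-$ in the local formulation, where no cross term appears at all. Symmetry is handled in Lemma \ref{radiallysymmetric} by moving planes on the extension; that route covers every positive solution vanishing at infinity, not only minimizers.

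The parts of your proposal that work:
\begin{itemize}
\item \emph{Existence.} Nehari minimization over symmetrized minimizing sequences, together with the compact radial embedding, is fine.
\item \emph{Positivity.} This is fine. A cleaner route is $(P_c(D)+\mu)^{-1}=\int_0^\infty e^{-\mu t}e^{-tP_c(D)}\,dt$, which has a strictly positive kernel and handles $\mu<mc^2$ and $\mu\ge mc^2$ at once.
\item \emph{One sign.} This is fine, because the cross term equals $-\lim_{t\to0^+}t^{-1}\langle e^{-tP_c(D)}u^+,u^-\rangle\le0$.
\end{itemize}

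\paragraph{The gap.} For a positive ground state $u$, minimality does force $\langle P_c(D)u^*,u^*\rangle=\langle P_c(D)u,u\rangle$. But this is a statement about the generator only. It says that
\[
\frac{1}{t}\left(\langle e^{-tP_c(D)}u^*,u^*\rangle-\langle e^{-tP_c(D)}u,u\rangle\right)\to0\quad\text{as }t\to0^+,
\]
where each bracket is nonnegative by Riesz. It does not give equality in the Riesz inequality for the kernel of $e^{-tP_c(D)}$ at any fixed $t>0$. So Lieb's strict rearrangement inequality cannot be invoked in the way you propose. Nor can $P_c(D)$ be written as a positive superposition of the operators $1-e^{-tP_c(D)}$.

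\paragraph{How to repair it.} You need an exact decomposition of the quadratic form into Riesz functionals with nonnegative weights. The subordination identity
\[
P_c(\xi)=\frac{c}{2\sqrt{\pi}}\int_0^\infty\left(1-e^{-s|\xi|^2}\right)e^{-m^2c^2s}s^{-3/2}\,ds,
\]
together with $|u^*|_2=|u|_2$, gives
\[
\langle P_c(D)u,u\rangle-\langle P_c(D)u^*,u^*\rangle=\frac{c}{2\sqrt{\pi}}\int_0^\infty\left(\langle e^{s\Delta}u^*,u^*\rangle-\langle e^{s\Delta}u,u\rangle\right)e^{-m^2c^2s}s^{-3/2}\,ds.
\]
The integrand is nonnegative. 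Equality of the forms therefore forces $\langle e^{s\Delta}u,u\rangle=\langle e^{s\Delta}u^*,u^*\rangle$ for a.e.\ $s>0$. The Gaussian kernel is strictly symmetric decreasing, so Lieb's strict Riesz inequality gives $u=u^*(\cdot-a)$ for some $a\in\mathbb{R}^N$. The L\'evy-kernel representation combined with a layer-cake decomposition works equally well.

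The same identity also gives the non-strict inequality $\langle P_c(D)u^*,u^*\rangle\le\langle P_c(D)u,u\rangle$ directly on $\mathbb{R}^N$. You could therefore drop the Steiner symmetrization of the extension altogether.
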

%For later use, we also recall the variational characterization of the ground
%states established in \cite{ChoiSeok2016}. The $C^1$ functional $I_e: H^1(\mathbb{R}^{N+1}_+)\to \mathbb{R}$ associated with the extension problem of \eqref{eq:CS-power} is given by
%\begin{equation}\label{Ie}
%    \begin{aligned}
%    I_e(U)
%&:=
%\frac{1}{2c}\int_{\mathbb{R}^{N+1}_+}
%\left(c^2|\nabla U(x,y)|^2+m^2c^4U(x,y)^2\right)\,dx\,dy\\
%&\quad+\frac{(-mc^2+\mu)}{2}\int_{\mathbb{R}^N}U(x,0)^2\,dx
%-\frac1p\int_{\mathbb{R}^N}|U(x,0)|^p\,dx.
%\end{aligned}
%\end{equation}
%Set
%\begin{equation*}
%    J_e(U):=I_e'(U)U
%\end{equation*}
%and define the Nehari manifold by
%\begin{equation*}
%    \mathcal N_e
%:=
%\left\{
%U\in H^1(\mathbb{R}^{N+1}_+)\setminus\{0\}:
%J_e(U)=0
%\right\}.
%\end{equation*}
%Moreover, define
%\begin{equation*}
%    M_e:=\inf_{U\in\mathcal N_e} I_e(U).
%\end{equation*}
%It was proved in \cite{ChoiSeok2016} that $M_e$ is achieved by some 
%$U\in\mathcal N_e$. In particular, $U$ is a critical
%point of $I_e$, and its trace
%$u=U(\cdot,0)$
%is a ground state solution of \eqref{eq:CS-power}.

For $p\in\left(2,\frac{2N}{N-1}\right)$, put $\mu_p=\mu+\frac{1}{p-2}$, choose a positive radial action ground state $w_p$ of \eqref{eq:CS-power} with parameter $\mu_p$, and set
\begin{equation*}
     u_p
    :=
    (p-2)^{\frac{1}{p-2}}w_p.
\end{equation*}
Then $u_p$ is a positive radial action ground state of
\begin{equation}\label{eq:power-approximation}
   \left(P_c(D)+\mu\right) u_p
    =
    \frac{|u_p|^{p-2}-1}{p-2}
    u_p
    \quad\text{in }\mathbb{R}^N.
\end{equation}
The corresponding extension functional is
\begin{equation}\label{Je}
    \begin{aligned}
     I_{c,p}(U)
&:=
\frac{1}{2c}\int_{\mathbb{R}^{N+1}_+}
\left(c^2|\nabla U(x,y)|^2+m^2c^4U(x,y)^2\right)\,dx\,dy+\frac{(-mc^2+\mu)}{2}\int_{\mathbb{R}^N}U(x,0)^2\,dx\\
&\quad+\frac{1}{2p}\int_{\mathbb{R}^N}|U(x,0)|^p\,dx
-\frac{1}{2}\int_{\mathbb{R}^N}\frac{|U(x,0)|^p-U(x,0)^2}{p-2}\,dx.
\end{aligned}
\end{equation}
For convenience, set 
\begin{equation}\label{D*H*}
    \mathcal{D}_*:=\left\{U\in\mathcal{D}:U(\cdot,0)\neq0\right\},\quad H_*:=\left\{U\in H^1(\mathbb{R}^{N+1}_+):U(\cdot,0)\neq0\right\}. 
\end{equation}
We define the Nehari sets associated with $I_c$ and $I_{c,p}$, respectively, by
\begin{equation*}
    \mathcal{N}_c:=\left\{U\in\mathcal{D}_*:J_c(U)=0\right\}\quad\text{and}\quad \mathcal{N}_{c,p}:=\left\{U\in H_*:J_{c,p}(U)=0\right\},
\end{equation*}
where 
\begin{align*}
    J_c(U):=&\frac{1}{c}\int_{\mathbb R^{N+1}_+}
\left(c^2|\nabla U|^2+m^2c^4U^2\right)\,dx\,dy+(-mc^2+\mu)\int_{\mathbb{R}^N}U(x,0)^2\,dx\\
&\quad-
\int_{\mathbb R^N}
U(x,0)^2\log |U(x,0)|\,dx,
\end{align*}
and
\begin{align*}
    J_{c,p}(U):=& \frac{1}{c}\int_{\mathbb R^{N+1}_+}
\left(c^2|\nabla U|^2+m^2c^4U^2\right)\,dx\,dy+(-mc^2+\mu)\int_{\mathbb{R}^N}U(x,0)^2\,dx\\
&\quad-\int_{\mathbb{R}^N}\frac{|U(x,0)|^p-U(x,0)^2}{p-2}\,dx.
\end{align*}

\begin{lemma}\label{notempty}
    \begin{enumerate}[\rm(i)]
        \item For every $U\in \mathcal{D}_*$, there exists a unique $t_U>0$ such that $t_UU\in\mathcal{N}_c$. Moreover, $I_c(t_UU)>I_c(tU)$ for all $t>0$ and $t\neq t_U$.
        \item For every $V\in H_*$, there exists a unique $t_V>0$ such that $t_VV\in\mathcal{N}_{c,p}$. Moreover,
        $I_{c,p}(t_VV)>I_{c,p}(tV)$ for all $t>0$ and $t\neq t_V$.
    \end{enumerate}
\end{lemma}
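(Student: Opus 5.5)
Both parts come from a one-variable fiber analysis: fix the direction and study the scalar functions $t\mapsto I_c(tU)$ and $t\mapsto I_{c,p}(tV)$ on $(0,\infty)$. To shorten formulas, set
\[
A(U):=\frac{1}{c}\int_{\mathbb{R}^{N+1}_+}\left(c^2|\nabla U|^2+m^2c^4U^2\right)\,dx\,dy+(-mc^2+\mu)\int_{\mathbb{R}^N}U(x,0)^2\,dx .
\]
By Lemma \ref{norms}, and since $\sqrt{c^2|\xi|^2+m^2c^4}-mc^2+\mu\ge\mu$, we have
\[
A(U)\ \ge\ \int_{\mathbb{R}^N}\left(P_c(\xi)+\mu\right)|\widehat u(\xi)|^2\,d\xi\ \ge\ \mu|u|_2^2>0
\]
whenever $u=U(\cdot,0)\neq0$, which is exactly the case for $U\in\mathcal{D}_*$ or $U\in H_*$.

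\emph{Part (i).} For $U\in\mathcal{D}_*$, write $u=U(\cdot,0)$ and $B:=|u|_2^2>0$. The set $\mathcal{D}$ is invariant under positive scaling: if $u^2|\log|u||\in L^1$, then $(tu)^2|\log|tu||\le t^2u^2(|\log t|+|\log|u||)$ is also integrable. Using $\log|tu|=\log t+\log|u|$, I will compute
\[
\varphi(t):=I_c(tU)=\frac{t^2}{2}\Big(A(U)+\tfrac12B-\int_{\mathbb{R}^N}u^2\log|u|\,dx\Big)-\frac{t^2\log t}{2}B .
\]
Differentiating,
\[
\varphi'(t)=t\Big(A(U)-\int_{\mathbb{R}^N}u^2\log|u|\,dx\Big)-tB\log t=\frac{1}{t}J_c(tU),
\]
and a direct check confirms $J_c(tU)=t^2\big(A-\int u^2\log|u|\big)-t^2B\log t$. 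Hence $\varphi'(t)=0$ exactly when $\log t=\big(A-\int u^2\log|u|\big)/B$, which has a unique solution $t_U$. Moreover $\varphi'>0$ on $(0,t_U)$ and $\varphi'<0$ on $(t_U,\infty)$, so $t_U$ is the strict global maximizer of $\varphi$ on $(0,\infty)$.

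\emph{Part (ii).} For $V\in H_*$, write $v=V(\cdot,0)$, $B=|v|_2^2$ and $C:=|v|_p^p$; by Lemma \ref{Sobolevembedding}, $C$ is finite and positive. Computing as before,
\[
\psi(t):=I_{c,p}(tV)=\frac{t^2}{2}\Big(A+\frac{B}{p-2}\Big)-t^p\,\frac{C}{p-2}\Big(\frac12-\frac1p\Big).
\]
Here the coefficient of $t^p$ is $\frac{C}{p-2}\cdot\frac{p-2}{2p}=\frac{C}{2p}>0$. Then
\[
\psi'(t)=t\Big(A+\frac{B}{p-2}\Big)-\frac{t^{p-1}C}{p-2}=\frac{1}{t}J_{c,p}(tV).
\]
Since $A+\frac{B}{p-2}>0$ and $p>2$, the map $t\mapsto t^{p-2}$ is strictly increasing from $0$ to $\infty$. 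So $\psi'$ vanishes exactly at $t_V=\big((p-2)A/C+B/C\big)^{1/(p-2)}$, it is positive before $t_V$ and negative after, and $t_V$ is the strict global maximizer of $\psi$.

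The only step needing care is the bookkeeping that identifies $\varphi'(t)$ and $\psi'(t)$ with $t^{-1}J_c(tU)$ and $t^{-1}J_{c,p}(tV)$, which I would verify term by term. The positivity of $A(U)$ from Lemma \ref{norms}, which guarantees $t_V$ is well defined, also has to be checked; it is not a real obstacle. The logarithmic part (i) is in fact easier, because the $\log t$ term makes $\varphi'(t)/t$ strictly decreasing and affine in $\log t$.
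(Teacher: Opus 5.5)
Your fiber-map argument matches the paper's proof: you show that $t\mapsto I_c(tU)$ and $t\mapsto I_{c,p}(tV)$ have derivatives $t^{-1}J_c(tU)$ and $t^{-1}J_{c,p}(tV)$, and that each derivative vanishes at exactly one point, changing sign from positive to negative there. Your extra checks, that $\mathcal{D}$ is invariant under positive scaling and that $A(U)\ge\mu|u|_2^2>0$, are details the paper leaves implicit.

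There is one arithmetic slip: the $t^p$-coefficient in $I_{c,p}(tV)$ is $\frac{C}{2(p-2)}-\frac{C}{2p}=\frac{C}{p(p-2)}$, not $\frac{C}{2p}$. Your stated $\psi'(t)$ is nevertheless the correct one, so the argument is unaffected.
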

\begin{proof}
    We only prove (i) since the proof of (ii) is similar. Let $U\in \mathcal{D}_*$ and consider the fiber map
    \begin{align*}
        \Phi(t):=I_{c}(tU)=
    t^2\left(I_c(U)-\frac{\log t}{2}\int_{\mathbb{R}^N} U(x,0)^2\,dx\right),\quad t>0.
    \end{align*}
    Since
    \begin{equation*}
        \Phi'(t)=2t\left(I_c(U)-\frac{(1+2\log t)}{4}\int_{\mathbb{R}^N} U(x,0)^2\,dx\right)=\frac{J_c(tU)}{t},
    \end{equation*}
    we have
    \begin{equation*}
        	tU\in \mathcal{N}_c\iff \Phi'(t)=0.
    \end{equation*}
   It is clear that the equation $\Phi'(t)=0$ has a unique solution $t=t_U>0$. Moreover, $\Phi$ is increasing on $(0,t_U)$ and decreasing on $(t_U,\infty)$. Thus, $t_U$ is the unique global maximum point of $\Phi$, and hence $I_c(t_UU)>I_c(tU)$ for all $t>0$ and $t\neq t_U$. This proves (i).
\end{proof}
For $U\in\mathcal{N}_c$ and $V\in\mathcal{N}_{c,p}$, respectively,
\begin{equation*}
    I_c(U)=\frac{1}{4}\int_{\mathbb{R}^N}U(x,0)^2\,dx,\quad  I_{c,p}(V)=\frac{1}{2p}\int_{\mathbb{R}^N}|V(x,0)|^p\,dx.
\end{equation*}
Thus, we define the Nehari levels
\begin{equation*}
    m_c:=\inf_{U\in\mathcal{N}_c}I_c(U),
\quad
  m_{c,p}:=\inf_{V\in\mathcal{N}_{c,p}}I_{c,p}(V).
\end{equation*}
These levels are well-defined. The use of the same notation $m_c$
as in \eqref{actiongroundstatelevel} will be justified in the proof of Theorem \ref{result1}, where we show that the Nehari level
coincides with the action ground-state level.

The following results from \cite{WangZhang2019} will play a key role in the comparison between $m_c$ and $m_{c,p}$.

\begin{lemma}\label{estimate1}
\begin{enumerate} [\rm(i)]
    \item  For any $q>2$, there exists $C_q>0$ such that
    \begin{equation*}
        \frac{x^{p-2}-1}{p-2}\leq
         C_qx^{q-2}
    \end{equation*}
    holds for all $p\in(2,q)$ and $x\geq0$.
        \item Let $s>0$, $\delta>0$, then
        \begin{equation*}
            \frac{x^s(x^\delta-1)}{\delta}\to x^s\log x\text{ in }C_{loc}^{m,\alpha}[0,+\infty)
        \end{equation*}
        as $\delta\to0$, where $m$ is the largest integer with $m<s$, and $\alpha\in (0,s-m)$.
\end{enumerate}
\end{lemma}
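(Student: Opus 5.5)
Part (i) follows from a single-variable inequality, uniform in $p$. Write $\delta=p-2$ and $a=q-2>\delta$. We need a constant $C_q$, independent of $\delta\in(0,a)$, such that
\begin{equation*}
\frac{x^{\delta}-1}{\delta}\leq C_q x^{a}\quad\text{for all }x\geq0.
\end{equation*}

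We split into two cases.

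For $0\leq x\leq1$ the left side is nonpositive, so the inequality holds with any $C_q\geq0$.

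For $x\geq1$ we write $x=e^t$ with $t\geq0$. The function $\delta\mapsto (e^{\delta t}-1)/\delta$ is nondecreasing in $\delta$, since $\delta\mapsto e^{\delta t}$ is convex and this quotient is its difference quotient at $0$. Hence
\begin{equation*}
\frac{x^\delta-1}{\delta}\leq\frac{x^{a}-1}{a}\leq\frac{1}{a}x^{a}.
\end{equation*}
So $C_q=1/(q-2)$ works. An equally short alternative is the mean value theorem, which gives $(x^\delta-1)/\delta=x^{\theta}\log x\leq x^{\delta}\log x$ for some $\theta\in(0,\delta)$. Combined with $\log x\leq x^{a-\delta}/(e(a-\delta))$ this would produce a constant depending on $a-\delta$, so the monotonicity argument is the better choice.

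Part (ii) concerns $f_\delta(x)=x^s(x^\delta-1)/\delta$, which is to converge to $f(x)=x^s\log x$ in $C^{m,\alpha}_{loc}[0,\infty)$. We first Taylor expand in $\delta$:
\begin{equation*}
\frac{x^\delta-1}{\delta}-\log x=\int_0^1(1-\tau)\,\delta(\log x)^2x^{\tau\delta}\,d\tau .
\end{equation*}
Thus $f_\delta-f=\delta\, x^s(\log x)^2 R_\delta(x)$ with $R_\delta(x)=\int_0^1(1-\tau)x^{\tau\delta}\,d\tau$. Fix a compact interval $[0,K]$. We then estimate the derivatives $\partial_x^j(f_\delta-f)$ for $j\leq m$ by differentiating under the integral. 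Each derivative has the form $\delta$ times a finite sum of terms $x^{s-j+\tau\delta}(\log x)^k$ with $k\leq 2$, multiplied by polynomials in $\delta,\tau$. Since $s-j>0$ for $j\leq m<s$, these terms are bounded on $[0,K]$ uniformly for $\delta\leq1$. Hence $\|f_\delta-f\|_{C^m[0,K]}=O(\delta)$.

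For the Hölder seminorm of the $m$-th derivative, let $\beta=s-m\in(0,1]$ and choose $\alpha<\alpha'<\beta$. The functions $g(x)=x^{\beta+\tau\delta}(\log x)^k$ satisfy $|g(x)-g(y)|\leq C|x-y|^{\alpha'}$ on $[0,K]$, uniformly in $\tau\in[0,1]$ and $\delta\leq1$, because $x^\beta|\log x|^k$ is $C^{0,\alpha'}$ near $0$ for every $\alpha'<\beta$. Consequently $[\partial^m(f_\delta-f)]_{\alpha'}=O(\delta)$ on $[0,K]$. For the $\alpha$-seminorm we interpolate, or simply observe that $C^{0,\alpha'}\subset C^{0,\alpha}$ on a compact interval. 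This gives the convergence.

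The main obstacle is this uniform Hölder estimate near $x=0$, where the $\log$ singularity costs part of the exponent. That loss is exactly why the statement requires the strict inequality $\alpha<s-m$.
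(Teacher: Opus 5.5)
Your proof is correct. The paper gives no proof of this lemma: it quotes both parts from Wang and Zhang \cite{WangZhang2019}. Your argument therefore serves as a self-contained proof rather than an alternative to one in the paper.

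For (i), convexity of $\delta\mapsto e^{\delta\log x}$ makes the secant slope $(x^\delta-1)/\delta$ nondecreasing in $\delta$. This holds for every $x>0$, so the case split is not even needed. It yields the explicit constant $C_q=1/(q-2)$, uniform in $p\in(2,q)$. That uniformity is exactly what Lemmas \ref{energycomparison}--\ref{lowerboundedness} rely on.

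For (ii), the integral Taylor remainder gives $f_\delta-f=\delta\int_0^1(1-\tau)x^{s+\tau\delta}(\log x)^2\,d\tau$. This reduces everything to uniform $C^{m,\alpha}([0,K])$ bounds on $x^{s+\tau\delta}(\log x)^2$, and you check these correctly. On $(0,1]$ one has $x^{\gamma-1}\le x^{\beta-1}$ for $\gamma\ge\beta$. Also $x^{\beta-1}(1+|\log x|)^2\le Cx^{\alpha-1}$ whenever $\alpha<\beta=s-m$. The argument actually gives the quantitative rate $\|f_\delta-f\|_{C^{m,\alpha}([0,K])}=O(\delta)$, which is stronger than the stated convergence.

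Two cosmetic remarks. The auxiliary exponent $\alpha'$ is unnecessary, since the derivative bound works directly with $\alpha<s-m$. Also, your $C^0$ estimate immediately gives the uniform bound on $\{0\le x\le1\}$ that the paper extracts from (ii) in the proof of Theorem \ref{result1}.
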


Let
\begin{equation}\label{f}
    f(s):=
\begin{cases}
s\log|s|,&\quad s\neq0,\\
0,&\quad s=0,
\end{cases}
\end{equation}
and, for $(s,t)\neq(0,0)$, define
\begin{equation}\label{Df}
    \mathcal{D}_f(s,t):=
\begin{cases}
\displaystyle\frac{f(s)-f(t)}{s-t},&\quad s\neq t,\\
f'(s),&\quad s=t\neq0.
\end{cases}
\end{equation}
At the end of this section, we present some auxiliary properties of the logarithmic nonlinearity that will be useful in our analysis below.
\begin{lemma}[\cite{Shuai2019}]\label{logBrezisLieb}
      	Let $\{u_n\}\subset H^1(\mathbb{R}^N)$ be a bounded sequence such that $u_n\to u$ a.e. in $\mathbb{R}^N$, and assume that the sequence $\{u_n^2\log u_n^2\}\subset L^1(\mathbb{R}^N)$ is bounded. Then $u^2\log u^2\in L^1(\mathbb{R}^N)$, and
      	\begin{align*}
      		\lim_{n \to \infty} \int_{\mathbb{R}^N} \left[ u_n^2 \log u_n^2 - |u_n - u|^2 \log |u_n - u|^2 \right] \, dx = \int_{\mathbb{R}^N} u^2 \log u^2 \, dx.
      	\end{align*}
      \end{lemma}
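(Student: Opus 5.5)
The plan is to split the nonlinearity $F(s):=s^2\log s^2$ into a part of at most power growth, to which the classical Brezis--Lieb argument applies using only the $H^1$ bound, and a nonnegative part concentrated near $s=0$. The second part is controlled by the assumed bound on $\int u_n^2\log u_n^2$. Fix $\delta\in(0,e^{-3})$ and let $F_2\geq0$ be an even $C^1$ function with $F_2(s)=-s^2\log s^2$ for $|s|\leq\delta$, $F_2$ constant for $|s|\geq 2\delta$, and $F_2$ nondecreasing in $|s|$. Set $F_1:=F+F_2$. Then $F_1$ vanishes to second order near $0$, and for some fixed $q\in(2,2^*)$ it satisfies
\begin{equation*}
|F_1(s)|\leq C(s^2+|s|^q),\qquad |F_1'(s)|\leq C(|s|+|s|^{q-1}).
\end{equation*}

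\textbf{Step 1 (integrability of the limit).} Since $\{u_n\}$ is bounded in $H^1(\mathbb{R}^N)\hookrightarrow L^2\cap L^q$, the integrals $\int F_1(u_n)\,dx$ are bounded. Combined with the hypothesis that $\int F(u_n)\,dx$ is bounded, this gives a bound on $\int F_2(u_n)\,dx$. Fatou's lemma then yields $F_2(u)\in L^1$. Also $F_1(u)\in L^1$ because $u\in H^1$ by weak lower semicontinuity. Hence $u^2\log u^2\in L^1(\mathbb{R}^N)$.

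\textbf{Step 2 ($F_1$-part).} Apply the Brezis--Lieb lemma in its general form: for each $\varepsilon>0$ one needs $|j(a+b)-j(a)|\leq\varepsilon\varphi(a)+\psi_\varepsilon(b)$, with $\sup_n\int\varphi(u_n-u)<\infty$ and $\int\psi_\varepsilon(u)<\infty$. For $j=F_1$, the mean value theorem together with Young's inequality gives this with $\varphi(a)=a^2+|a|^q$ and $\psi_\varepsilon(b)=C_\varepsilon(b^2+|b|^q)$. Both are controlled by the $H^1$ bound, so
\begin{equation*}
\int\bigl[F_1(u_n)-F_1(u_n-u)\bigr]\,dx\to\int F_1(u)\,dx .
\end{equation*}

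\textbf{Step 3 ($F_2$-part, main obstacle).} Here $\varphi=F_2$ itself must be used, since $L^r$ norms with $r<2$ are not available. The key is a quasi-homogeneity estimate. Because $\log(1/(4s^2))<\log(1/s^2)$, one has $F_2(2s)\leq 4F_2(s)+Cs^2$ (the $s^2$ term absorbs the transition region $\delta<|s|<2\delta$). Iterating gives $F_2(Ks)\leq C_K\bigl(F_2(s)+s^2\bigr)$. Now split into two cases.
\begin{itemize}
\item If $|b|\leq\eta|a|$, the mean value theorem and $|F_2'(s)|\leq C\bigl(|s|\log(1/s^2)+|s|\bigr)\lesssim F_2(s)/|s|+|s|$ give $|F_2(a+b)-F_2(a)|\leq C\eta\bigl(F_2(a)+a^2\bigr)$.
\item If $|b|>\eta|a|$, then $|F_2(a+b)-F_2(a)|\leq F_2(a+b)+F_2(a)\leq C_\eta\bigl(F_2(b)+b^2\bigr)$.
\end{itemize}
Choosing $\eta$ with $C\eta=\varepsilon$ gives the Brezis--Lieb hypothesis with $\varphi(a)=F_2(a)+a^2$ and $\psi_\varepsilon(b)=C_\varepsilon(F_2(b)+b^2)$. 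To check $\sup_n\int\varphi(u_n-u)<\infty$, the same quasi-subadditivity gives $F_2(u_n-u)\leq C\bigl(F_2(u_n)+F_2(u)+u_n^2+u^2\bigr)$, which is bounded in $L^1$ by Step 1. Therefore $\int[F_2(u_n)-F_2(u_n-u)]\to\int F_2(u)$.

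Subtracting Step 3 from Step 2 (recall $F=F_1-F_2$) yields the claimed splitting. The delicate point I expect is verifying the two-case inequality uniformly, including across the cutoff region of $F_2$; the $s^2$ corrections are designed to handle this.
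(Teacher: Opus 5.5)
Your proof is correct. The paper gives no proof of this lemma; it simply quotes \cite{Shuai2019}. Your argument is a complete, self-contained proof along the standard lines for this result. You split $s^2\log s^2$ into two pieces. The first has subcritical power growth, and the classical Brezis--Lieb argument applies to it using only the $H^1$ bound. The second is nonnegative and concentrated near $0$. The general Brezis--Lieb lemma applies to it with $\varphi$ built from that piece itself, and its integrals along $\{u_n\}$ are controlled by the assumed $L^1$ bound.

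The one choice that shapes your write-up is the cutoff. Your $F_2$ is bounded, hence not globally doubling. That is why the $s^2$ corrections appear in $F_2(2s)\leq 4F_2(s)+Cs^2$, in the bound for $F_2'$, and in $\varphi,\psi_\varepsilon$. These cost nothing, because $\{u_n\}$ and $u$ are bounded in $L^2$. When you write out Case 1, say explicitly that $\eta\leq\frac12$, so every intermediate point $a+\theta b$ has modulus in $[|a|/2,2|a|]$. Then monotonicity and doubling of $F_2$ convert the bound on $F_2'(a+\theta b)$ into one in terms of $F_2(a)+a^2$.

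An alternative is to continue $-s^2\log s^2$ beyond $|s|=\delta$ by a convex, even extension with quadratic growth, as in the Squassina--Szulkin type decomposition. The resulting function $G$ satisfies $G(2s)\leq CG(s)$ and $|G'(s)|\leq CG(s)/|s|$ for all $s\neq0$. Then $|G(a+b)-G(a)|\leq\varepsilon G(a)+C_\varepsilon G(b)$ and $G(a+b)\leq C(G(a)+G(b))$ hold with no correction terms. Otherwise the two routes coincide.

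Your Step 1 also shows something slightly stronger than stated: given the $H^1$ bound, the $L^1$ hypothesis is used only through the lower bound $\inf_n\int u_n^2\log u_n^2\,dx>-\infty$.
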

      
\begin{lemma}\label{log-divided-difference}
The following properties hold.
\begin{enumerate}[\rm(i)]
\item If $0<\alpha\le s,t\leq\beta$, then
\begin{equation*}
    \mathcal{D}_f(s,t)=1+\log\theta
\end{equation*}
for some $\theta\in[\alpha,\beta]$. Moreover,
\begin{equation*}
    |\mathcal{D}_f(s,t)|\leq
\max\{|1+\log\alpha|,\ |1+\log\beta|\}.
\end{equation*}

\item Let $K\subset\mathbb{R}^N$ be compact and let
$v_c,w_c:K\to(0,\infty)$ satisfy
\begin{equation*}
    v_c(x),w_c(x)\to g(x)\quad\text{for a.e. }x\in K,
\end{equation*}
where $g>0$ on $K$. Suppose that there exist constants
$0<\alpha_K\leq\beta_K$ such that
\begin{equation*}
    \alpha_K\leq v_c(x),w_c(x)\leq\beta_K
\quad\text{for a.e. }x\in K
\end{equation*}
and all sufficiently large $c$. Then, for every $1\leq p<\infty$,
\begin{equation*}
    \mathcal{D}_f(v_c,w_c)\to f'(g)=1+\log g
\quad\text{in }L^p(K).
\end{equation*}

\item If $0<s,t\leq e^{-1}$, then $\mathcal{D}_f(s,t)\leq0$.
If $0<s\leq e^{-1}$ and
$-e^{-1}\leq t<0$, then
$\mathcal{D}_f(s,t)\leq-1$.

%\item Let $K\subset\mathbb R^N$ be compact. Suppose that
%$v,w\in L^\infty(K)$ and $w$ satisfies $\inf_K w\geq 2\alpha$
%for some $\alpha>0$. Then there exists
%a constant $C_K>0$, depending only on $\alpha$ and the
%bounds of $v,w$ on $K$, such that
%\begin{equation*}
%    |f(v)-f(w)-f'(w)(v-w)|_{L^1(K)}
%\leq C_K|v-w|_{L^2(K)}^2.
%\end{equation*}
\end{enumerate}
\end{lemma}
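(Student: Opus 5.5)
We prove the three items separately, working directly with $f(s)=s\log|s|$ and $f'(s)=1+\log|s|$ for $s\neq0$.

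\emph{Item (i).} Take $0<\alpha\le s,t\le\beta$. If $s\neq t$, the function $f$ is $C^1$ on $[\alpha,\beta]$. By the mean value theorem there is $\theta$ between $s$ and $t$, hence $\theta\in[\alpha,\beta]$, with $\mathcal{D}_f(s,t)=f'(\theta)=1+\log\theta$. If $s=t$, take $\theta=s$. Since $\theta\mapsto 1+\log\theta$ is monotone, its absolute value on $[\alpha,\beta]$ is at most $\max\{|1+\log\alpha|,|1+\log\beta|\}$. This gives the bound.

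\emph{Item (ii).} Apply (i) pointwise for a.e. $x\in K$ and all large $c$. We get $\mathcal{D}_f(v_c,w_c)(x)=1+\log\theta_c(x)$, where $\theta_c(x)$ lies between $v_c(x)$ and $w_c(x)$. Since $v_c(x),w_c(x)\to g(x)$, also $\theta_c(x)\to g(x)$. Because $g(x)>0$, continuity of $\log$ gives $\mathcal{D}_f(v_c,w_c)(x)\to 1+\log g(x)$ a.e. on $K$.

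Item (i) also bounds $|\mathcal{D}_f(v_c,w_c)|$ uniformly by $\max\{|1+\log\alpha_K|,|1+\log\beta_K|\}$. This constant is integrable in $L^p(K)$ because $K$ is compact. Dominated convergence then gives convergence in $L^p(K)$ for every $1\le p<\infty$.

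We should check that $\mathcal{D}_f(v_c,w_c)$ is measurable. It is a Borel function of the pair $(v_c,w_c)$, so this holds. The same limit argument shows $\alpha_K\le g\le\beta_K$ a.e., so $\log g$ is bounded and $1+\log g\in L^p(K)$.

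\emph{Item (iii).} First take $0<s,t\le e^{-1}$. By (i) with $\beta=e^{-1}$ and $\alpha=\min\{s,t\}$, we get $\mathcal{D}_f(s,t)=1+\log\theta$ with $\theta\le e^{-1}$. Hence $\mathcal{D}_f(s,t)\le 0$.

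Now take $0<s\le e^{-1}$ and $-e^{-1}\le t<0$. Here the mean value theorem cannot be used, because $f$ is not differentiable at $0$ and $f'(0)=-\infty$. Instead compute directly. Write $t=-\tau$ with $0<\tau\le e^{-1}$. Since $f$ is odd, $f(t)=-f(\tau)=-\tau\log\tau$, and
\[
\mathcal{D}_f(s,t)=\frac{s\log s+\tau\log\tau}{s+\tau}.
\]
This is a convex combination of $\log s$ and $\log\tau$ with weights $s/(s+\tau)$ and $\tau/(s+\tau)$. Both $\log s$ and $\log\tau$ are at most $-1$. Therefore $\mathcal{D}_f(s,t)\le-1$.

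The only step that needs care is this mixed-sign case, since it is the one place the mean value theorem fails and the explicit computation is required.
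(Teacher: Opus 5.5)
Your proposal is correct and follows essentially the same route as the paper: the mean value theorem for (i), the uniform bound from (i) plus pointwise convergence and dominated convergence for (ii), and for (iii) the mean value theorem in the same-sign case together with the explicit formula $\mathcal{D}_f(s,t)=\frac{s\log s+\tau\log\tau}{s+\tau}$ with $\tau=-t$ in the mixed-sign case. Your observation that this quotient is a convex combination of $\log s$ and $\log\tau$, both at most $-1$, supplies the justification that the paper leaves implicit.
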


\begin{proof}
The first assertion follows from the mean value theorem. The
second follows from the first assertion and the dominated
convergence theorem.
For the third assertion, if $s,t>0$, the mean value theorem
gives
\begin{equation*}
    \mathcal{D}_f(s,t)=1+\log\theta\le0
\end{equation*}
for some $\theta\in(0,e^{-1}]$. If $s>0>t$, letting
$r=-t\in(0,e^{-1}]$, we have
\begin{equation*}
    \mathcal{D}_f(s,t)
 =\frac{s\log s+r\log r}{s+r}\leq -1.
\end{equation*}

%Finally, set
%\begin{equation*}
%   E:=\{x\in K:|v(x)-w(x)|\leq\alpha\},\quad F:=K\setminus E.
%\end{equation*}
%On $E$, since 
%\begin{equation*}
%    w(x)\geq2\alpha,\quad   |v(x)-w(x)|\leq\alpha,
%\end{equation*}
%we have
%\begin{equation*}
%    v(x)\geq w(x)-|v(x)-w(x)|\geq 2\alpha-\alpha=\alpha.
%\end{equation*}
%Thus, the segment joining $v(x)$ and $w(x)$ is contained in $[\alpha,\infty)$, and Taylor's formula yields
%\begin{equation*}
%    |f(v)-f(w)-f'(w)(v-w)|\leq\frac{1}{2\alpha}|v-w|^2.
%\end{equation*}
%As a result,
%\begin{equation*}
%    \int_{E}|f(v)-f(w)-f'(w)(v-w)|\,dx\leq\frac{1}{2\alpha}|v-w|_{L^2(K)}^2.
%\end{equation*}
%On $F$, the boundedness assumptions give
%\begin{equation*}
%    |f(v)-f(w)-f'(w)(v-w)|\leq C_K,
%\end{equation*}
%where $C_K>0$ depends on $\alpha$ and the bounds of $v, w$ on $K$.
%By the definition of $F$,
%\begin{equation*}
%    \alpha^2|F|\leq\int_{F}|v-w|^2\,dx,
%\end{equation*}
%and consequently,
%\begin{equation*}
%    |F|\leq\frac{1}{\alpha^2}|v-w|_{L^2(K)}^2.
%\end{equation*}
%It follows that
%\begin{equation*}
%    \int_{F} |f(v)-f(w)-f'(w)(v-w)|\,dx\leq\frac{C_K}{\alpha^2}|v-w|_{L^2(K)}^2.
%\end{equation*}
%Combining the estimates on $E$ and $F$, we conclude that
%\begin{equation*}
%    |f(v)-f(w)-f'(w)(v-w)|_{L^1(K)}
%\leq C_K|v-w|_{L^2(K)}^2.
%\end{equation*}
%This proves (iv) and completes the proof.
\end{proof}

\section{Action ground states and qualitative properties}
In this section, we first establish the existence of action ground states by passing to the limit in the power-law approximation introduced in Section 2, and then establish several qualitative properties of the resulting ground states.
\subsection{Existence of action ground states}

\begin{lemma}\label{energycomparison}
    It holds that
    \begin{equation*}
        \limsup_{p\to2^+}m_{c,p}\leq m_c.
    \end{equation*}
\end{lemma}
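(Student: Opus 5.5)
Fix $\varepsilon>0$ and pick $U\in\mathcal{N}_c$ with $I_c(U)<m_c+\varepsilon$. The idea is to use $U$ as a test function for the power-law problem. By Lemma \ref{notempty}(ii), for each $p$ there is $t_p>0$ with $t_pU\in\mathcal{N}_{c,p}$, so $m_{c,p}\le I_{c,p}(t_pU)$. It therefore suffices to show $\limsup_{p\to2^+}I_{c,p}(t_pU)\le I_c(U)$. Since $t_U=1$ is the unique maximizer of $t\mapsto I_c(tU)$, it is enough to prove two things: $t_p$ stays in a compact subset of $(0,\infty)$ as $p\to2^+$, and $I_{c,p}(tU)\to I_c(tU)$ uniformly for $t$ in compact sets. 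Then $I_{c,p}(t_pU)\le \sup_{t\in K}|I_{c,p}(tU)-I_c(tU)|+\max_t I_c(tU)$, which gives the limsup bound.

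Write $u=U(\cdot,0)$ and $A=\frac1c\int(c^2|\nabla U|^2+m^2c^4U^2)+(-mc^2+\mu)|u|_2^2$. The quadratic parts of $I_{c,p}(tU)$ and $I_c(tU)$ coincide. We need
\[
\frac{1}{2p}\int|tu|^p\to\frac14\int t^2u^2,\qquad \int\frac{|tu|^p-t^2u^2}{p-2}\to\int t^2u^2\log|tu|.
\]
Pointwise, $\frac{|s|^p-s^2}{p-2}=s^2\frac{|s|^{p-2}-1}{p-2}\to s^2\log|s|$. For domination, on $\{|tu|\ge1\}$ we apply Lemma \ref{estimate1}(i) with some $q\in(2,\frac{2N}{N-1})$ to get the bound $C_q|tu|^q$, which is integrable by Lemma \ref{Sobolevembedding}. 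On $\{|tu|<1\}$ the quotient is nonpositive, and by monotonicity of $\delta\mapsto(x^\delta-1)/\delta$ (increasing in $\delta$) its absolute value is bounded by $s^2|\log|s||$. This is integrable because $u\in\mathcal X$, and $\int t^2u^2|\log|tu||\le t^2\int u^2(|\log|u||+|\log t|)$. Also $|tu|^p\le t^2u^2+C|tu|^q$. Dominated convergence then gives convergence for each $t$. The bounds are monotone in $t$, so the same dominating functions work uniformly on compact $t$-intervals, and uniform convergence follows, for instance from equicontinuity in $t$ via the derivative bounds, or from Dini-type monotonicity.

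For the bounds on $t_p$: the Nehari condition reads $A=\int\frac{t_p^{p-2}|u|^p-u^2}{p-2}$, while $J_c(U)=0$ gives $A=\int u^2\log|u|$. Set $g_p(t)=\int\frac{t^{p-2}|u|^p-u^2}{p-2}$. This function is increasing in $t$, and by the convergence above $g_p(t)\to\int u^2\log|tu|=A+|u|_2^2\log t$. At $t=1/2$ the limit is strictly less than $A$, and at $t=2$ it is strictly greater. So for $p$ close to $2$ we have $t_p\in(1/2,2)$.

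The main obstacle is the domination on the small-amplitude set $\{|u|<1\}$. There the negative part of the logarithm is delicate, since it is not controlled by Sobolev norms. This is exactly why $U$ must be taken in $\mathcal D$, so that $u^2|\log|u||\in L^1$, and why we need the elementary inequality $0\le\frac{1-x^{\delta}}{\delta}\le-\log x$ for $x\in(0,1)$.
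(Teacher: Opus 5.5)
Your argument is correct, but it is organized differently from the paper's. The paper never needs convergence that is uniform in $t$. It solves $J_{c,p}(t_pU)=0$ explicitly and uses $J_c(U)=0$ to get $t_p^{p-2}=\bigl(|u|_2^2+(p-2)\int_{\mathbb{R}^N}u^2\log|u|\,dx\bigr)/|u|_p^p$. The single limit $\int_{\mathbb{R}^N}\frac{|u|^p-u^2}{p-2}\,dx\to\int_{\mathbb{R}^N}u^2\log|u|\,dx$ then gives $t_p^{p-2}=1+o(p-2)$, hence $t_p\to1$. That limit is obtained with the same dominating function $C(|u|^q+u^2|\log|u||)$ that you use. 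Finally, the Nehari identity $I_{c,p}(t_pU)=\frac{t_p^p}{2p}|u|_p^p$ yields $I_{c,p}(t_pU)\to\frac14|u|_2^2=I_c(U)$ directly.

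You instead confine $t_p$ to $(1/2,2)$ through the monotonicity of $g_p$. You then compare the fibering maps $t\mapsto I_{c,p}(tU)$ and $t\mapsto I_c(tU)$ locally uniformly, using only that $t=1$ maximizes $I_c(tU)$.

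The paper's route is shorter and gives the sharper facts $t_p\to1$ and $I_{c,p}(t_pU)\to I_c(U)$. It relies on the special structure here: both Nehari identities and the explicit solvability for $t_p$. Yours is the generic upper-semicontinuity argument for Nehari levels. It would survive for nonlinearities without such identities, at the cost of proving convergence uniformly in $t$.

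On that point, your appeal to equicontinuity or Dini-type monotonicity is valid. For $t\in[1/2,2]$ your dominating functions bound the derivative $J_{c,p}(tU)/t$ uniformly; alternatively, $p\mapsto\frac{x^{p-2}-1}{p-2}$ is monotone. The cleanest justification, though, is the identity
$\int_{\mathbb{R}^N}\frac{|tu|^p-t^2u^2}{p-2}\,dx=t^p\int_{\mathbb{R}^N}\frac{|u|^p-u^2}{p-2}\,dx+t^2\,\frac{t^{p-2}-1}{p-2}\,|u|_2^2.$
It reduces everything to the single limit at $t=1$ plus explicit functions of $t$ that converge uniformly on compact subsets of $(0,\infty)$. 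At that point your argument essentially collapses onto the paper's.
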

\begin{proof}
    Fix $U\in\mathcal{N}_c$, and denote its trace by $u=U(\cdot,0)$.
    By Lemma \ref{notempty}, for every $p>2$ sufficiently close to $2$, there exists a unique $t_p>0$ such that $t_pU\in\mathcal{N}_{c,p}$. Then it follows from $J_{c,p}(t_pU)=0$ that
    \begin{equation*}
        t_p^{p-2}=\frac{\int_{\mathbb{R}^N}u^2\,dx+(p-2)\left(\frac{1}{c}\int_{\mathbb R^{N+1}_+}
\left(c^2|\nabla U|^2+m^2c^4U^2\right)\,dx\,dy+(-mc^2+\mu)\int_{\mathbb{R}^N}u^2\,dx\right)}{\int_{\mathbb{R}^N}|u|^p\,dx}.
    \end{equation*}
    Since $U\in\mathcal{N}_c$, we have
    \begin{equation*}
        \frac{1}{c}\int_{\mathbb R^{N+1}_+}
\left(c^2|\nabla U|^2+m^2c^4U^2\right)\,dx\,dy+(-mc^2+\mu)\int_{\mathbb{R}^N}u^2\,dx=
\int_{\mathbb R^N}
u^2\log |u|\,dx.
    \end{equation*}
    As a result,
    \begin{equation}\label{tp}
        t_p^{p-2}=\frac{\int_{\mathbb{R}^N}u^2\,dx+(p-2)\int_{\mathbb{R}^N}u^2\log |u|\,dx}{\int_{\mathbb{R}^N}|u|^p\,dx}.
    \end{equation}
    Fix $2<q<\frac{2N}{N-1}$,
    and assume, without loss of generality, that
    $2<p<q<\frac{2N}{N-1}$.
    Since $U\in\mathcal{D}$, we have $  u^2|\log|u||\in L^1(\mathbb{R}^N)$,
    while the trace embedding theorem gives $u\in L^q(\mathbb{R}^N)$.
    Lemma \ref{estimate1} (i) therefore gives
   \begin{equation*}
       \left|\frac{|u|^p-u^2}{p-2}\right|\leq C\left(|u|^q+u^2|\log|u||\right)\in L^1(\mathbb{R}^N).
   \end{equation*}
   Besides, we also have
   \begin{equation*}
       \frac{|u|^p-u^2}{p-2}\to u^2\log |u|\quad\text{a.e. in}~\mathbb{R}^N,
   \end{equation*}
   as $p\to2^+$. Hence,
   by the dominated convergence theorem,
   \begin{equation*}
       \lim_{p\to2^+}\int_{\mathbb{R}^N}\frac{|u|^p-u^2}{p-2}\,dx=\int_{\mathbb{R}^N}u^2\log |u|\,dx.
   \end{equation*}
   Consequently, by \eqref{tp}, we obtain
   \begin{equation*}
       t_p^{p-2}=1+o(p-2).
   \end{equation*}
   Then
   \begin{equation*}
       (p-2)\log t_p=\log (1+o(p-2))=o(p-2).
   \end{equation*}
  Hence $t_p\to1$ as $p\to2^+$.
   Since $t_pU\in\mathcal{N}_{c,p}$ and $U\in\mathcal{N}_c$,
   \begin{align*}
       I_{c,p}(t_pU)&=I_{c,p}(t_pU)-\frac{1}{2}J_{c,p}(t_pU)\\
       &=\frac{t_p^p}{2p}\int_{\mathbb{R}^N}|u|^p\,dx\to\frac{1}{4}\int_{\mathbb{R}^N}|u|^2\,dx=I_c(U).
   \end{align*}
   Consequently,
   \begin{equation*}
       \limsup_{p\to 2^+}m_{c,p}\leq \limsup_{p\to 2^+}I_{c,p}(t_pU)=I_c(U).
   \end{equation*}
  By the arbitrariness of $U$, we conclude that 
  \begin{equation*}
      \limsup_{p\to 2^+}m_{c,p}\leq m_c.
  \end{equation*}
\end{proof}

To pass to the limit as $p\to2^+$, we next establish a uniform estimate for the approximating problem.
\begin{lemma}\label{d12}
    There exists a constant $p_0\in\left(2,\frac{2N}{N-1}\right)$ such that, for any $d_1, d_2>0$, if $U\in H^1(\mathbb{R}^{N+1}_+)$ and $p\in (2,p_0)$ satisfy
    \begin{equation*}
        I_{c,p}(U)\leq d_1,\quad \Vert I'_{c,p}(U)\Vert_{(H^1(\mathbb{R}^{N+1}_+))^*}\leq d_2,
        \end{equation*}
        then there exists a constant $C(d_1, d_2)>0$ such that
        \begin{equation*}
            \Vert U\Vert_{H^1(\mathbb{R}^{N+1}_+)}, ~\int_{\mathbb{R}^N}\left|\frac{|U(x,0)|^p-U(x,0)^2}{p-2}\right|\,dx\leq C(d_1, d_2).
        \end{equation*}
\end{lemma}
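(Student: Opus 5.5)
We argue with the standard combination $I_{c,p}(U)-\frac12 I'_{c,p}(U)U$, which isolates the $L^p$ term, followed by a logarithmic Sobolev-type control of the remaining nonlinear term. Write $u=U(\cdot,0)$ and $\|U\|:=\Vert U\Vert_{H^1(\mathbb{R}^{N+1}_+)}$. Note that $I'_{c,p}(U)U=J_{c,p}(U)$. By the same computation as for the Nehari level,
\begin{equation*}
I_{c,p}(U)-\tfrac12 J_{c,p}(U)=\frac{1}{2p}\int_{\mathbb{R}^N}|u|^p\,dx .
\end{equation*}
Hence
\begin{equation*}
\frac{1}{2p}|u|_p^p\le d_1+\tfrac{d_2}{2}\|U\|.
\end{equation*}
This gives a first bound on $|u|_p^p$ that is linear in $\|U\|$.

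Next we control the nonlinear term. Since $I_{c,p}(U)=\frac12\|U\|^2+\frac1{2p}|u|_p^p-\frac12\int G_p(u)$ with $G_p(s)=\frac{|s|^p-s^2}{p-2}$, we get
\begin{equation*}
\|U\|^2\le 2d_1+\int_{\mathbb{R}^N} G_p(u)^+\,dx .
\end{equation*}
We split $\mathbb{R}^N$ into $\{|u|\le1\}$, where $G_p(u)\le0$, and $\{|u|>1\}$. On the latter set, Lemma \ref{estimate1}(i) with a fixed $q\in(2,\frac{2N}{N-1})$ yields $G_p(u)\le C_q|u|^q$. By itself this does not close the estimate, since $|u|_q^q$ may be comparable to $\|U\|^q$ and $q>2$.

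The better route is to interpolate with the $L^p$ bound. On $\{|u|>1\}$ we use the elementary inequality
\begin{equation*}
\frac{s^{p-2}-1}{p-2}\le \frac{s^{p'-2}}{p'-2}\quad\text{for } s\ge1,\ p'>p ,
\end{equation*}
or, more simply, we estimate the positive part by $C|u|^2\log^+|u|$ plus a contribution bounded through $|u|^p$. Choose $p_0$ so that for $p<p_0$ and $s>1$ we have $G_p(s)\le \varepsilon s^{q}+C_\varepsilon s^p$, where $q\in(p,\frac{2N}{N-1})$. Then Gagliardo–Nirenberg interpolation between $L^p$ and $L^{2N/(N-1)}$ gives
\begin{equation*}
|u|_q^q\le |u|_p^{\theta q}\,|u|_{2N/(N-1)}^{(1-\theta)q}\le C(1+\|U\|)^{\theta q/p}\,\|U\|^{(1-\theta)q}.
\end{equation*}
Here we used the first-step bound $|u|_p^p\le C(1+\|U\|)$ and the trace embedding (Lemma \ref{norms} with Lemma \ref{Sobolevembedding}). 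The resulting exponent is $\theta q/p+(1-\theta)q$. This exponent tends to $1$ when both $p$ and $q$ approach $2$. We therefore fix $q$ close to $2$ (depending only on $N$) and choose $p_0<q$ so that the exponent is strictly less than $2$ for all $p\in(2,p_0)$. Absorbing this term into $\|U\|^2$ gives $\|U\|\le C(d_1,d_2)$.

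Finally we bound $\int|G_p(u)|$. The positive part is bounded by the previous step. For the negative part, the identity $I_{c,p}(U)\le d_1$ can be rewritten as
\begin{equation*}
\tfrac12\int G_p(u)^-\le d_1-\tfrac12\|U\|^2-\tfrac1{2p}|u|_p^p+\tfrac12\int G_p(u)^+ .
\end{equation*}
Alternatively one can use $J_{c,p}$, giving $\int G_p(u)^-\le \int G_p(u)^+ +\|U\|^2+d_2\|U\|$. In either form the right-hand side is bounded. The main obstacle is the absorption step: one must check that the exponent stays uniformly below $2$ for $p\in(2,p_0)$. One must also check that the constant in the bound $G_p\le \varepsilon s^q+C_\varepsilon s^p$ on $\{s>1\}$ is uniform in $p$. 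This uniformity follows from $\frac{s^{p-2}-1}{p-2}\le \frac{s^{q-2}}{q-2}$-type estimates of Lemma \ref{estimate1}(i).
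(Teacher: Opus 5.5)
Your proposal is correct and follows essentially the paper's argument. You bound $|u|_p^p$ linearly in $\|U\|$ via $2I_{c,p}(U)-I_{c,p}'(U)U$, control $\int_{\{|u|>1\}}G_p(u)$ by $C|u|_q^q$ through Lemma \ref{estimate1}(i), and interpolate $L^q$ between $L^p$ and $L^{2N/(N-1)}$ to get a power of $1+\|U\|$ below $2$; the paper takes $q=p_0$ and gets exponent at most $3/2$. You then absorb this into $\|U\|^2$ and recover the negative part from $I_{c,p}(U)\le d_1$, as the paper does. The splitting $G_p(s)\le \varepsilon s^q+C_\varepsilon s^p$ is harmless but unnecessary, since the interpolation exponent is already uniformly below $2$ for $p\in(2,p_0)$.
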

\begin{proof}
    Set $u=U(\cdot,0)$.
    It follows from the assumption that
    \begin{equation*}
        2d_1+d_2\Vert U\Vert_{H^1(\mathbb{R}^{N+1}_+)}\geq 2I_{c,p}(U)-I_{c,p}'(U)U=\frac{1}{p}\int_{\mathbb{R}^N}|u|^p\,dx.
    \end{equation*}
    Hence,
    \begin{equation*}
        \int_{\mathbb{R}^N}|u|^p\,dx\leq C_1(d_1,d_2)\left(1+\Vert U\Vert_{H^1(\mathbb{R}^{N+1}_+)}\right).
    \end{equation*}
    Let $2<p<p_0<\frac{2N}{N-1}$,
    and choose $\theta\in (0,1)$ such that 
    $\frac{1}{p_0}=\frac{1-\theta}{p}+\frac{\theta(N-1)}{2N}$.
    By Lemma \ref{norms} and Lemma \ref{estimate1} (i), there exists a constant $C_{p_0}>0$ such that
    \begin{align*}
        \int_{|u|\geq1}\frac{|u|^p-u^2}{p-2}\,dx&\leq C_{p_0}\int_{\mathbb{R}^N}|u|^{p_0}\,dx\\
        &\leq C_{p_0}\left(\int_{\mathbb{R}^N}|u|^p\,dx\right)^\frac{p_0(1-\theta)}{p}\left(\int_{\mathbb{R}^N}|u|^{\frac{2N}{N-1}}\,dx\right)^\frac{p_0\theta(N-1)}{2N}\\
        &\leq C_{p_0}\left(\int_{\mathbb{R}^N}|u|^p\,dx\right)^\frac{p_0(1-\theta)}{p}\Vert u\Vert_{H^{1/2}(\mathbb{R}^N)}^{p_0\theta}\\
        &\leq C_{p_0}C_1(d_1,d_2)\left(1+\Vert U\Vert_{H^1(\mathbb{R}^{N+1}_+)}\right)^{\frac{p_0(1-\theta)}{p}+p_0\theta}.
    \end{align*}
    Since $\frac{p_0(1-\theta)}{p}+p_0\theta\to1$ as $p_0\to2$ uniformly, we can take $p_0>2$ small enough such that
    \begin{equation*}
        \int_{|u|\geq1}\frac{|u|^p-u^2}{p-2}\,dx\leq C_2(d_1,d_2)\left(1+\Vert U\Vert_{H^1(\mathbb{R}^{N+1}_+)}\right)^\frac{3}{2}.
    \end{equation*}
    As a result,
    \begin{equation}\label{H1bounded}
        \begin{aligned}
        d_1&\geq
   \frac{1}{2c}\Vert U\Vert_{H^1(\mathbb{R}^{N+1}_+)}^2+\frac{1}{2p}\int_{\mathbb{R}^N}|u|^p\,dx-\frac{1}{2}\int_{|u|\leq1}\frac{|u|^p-u^2}{p-2}\,dx\\
   &\quad-C_2(d_1,d_2)\left(1+\Vert U\Vert_{H^1(\mathbb{R}^{N+1}_+)}\right)^\frac{3}{2},
    \end{aligned}
    \end{equation}
    which yields that
    \begin{equation*}
        \Vert U\Vert_{H^1(\mathbb{R}^{N+1}_+)}\leq C_3(d_1,d_2).
    \end{equation*}
    Therefore,
    \begin{equation*}
        \int_{|u|\geq1}\frac{|u|^p-u^2}{p-2}\,dx\leq C_4(d_1,d_2).
    \end{equation*}
    Besides, \eqref{H1bounded} also yields that
    \begin{equation*}
        \int_{|u|\leq1}\frac{u^2-|u|^p}{p-2}\,dx\leq C_5(d_1,d_2).
    \end{equation*}
    Hence we conclude that
    \begin{equation*}
        \int_{\mathbb{R}^N}\left|\frac{|u|^p-u^2}{p-2}\right|\,dx\leq C(d_1, d_2).
    \end{equation*}
    This completes the proof.
\end{proof}
In the sequel, we always take $p_0$ defined in Lemma \ref{d12} and assume that $p\in (2,p_0)$. We also need a uniform positive lower bound for the Nehari level in order to exclude vanishing in the limiting process.

\begin{lemma}\label{lowerboundedness}
 There exists $\delta_0>0$ such that
 \begin{equation*}
     m_{c,p}\geq\delta_0
 \end{equation*}
 for all $p\in (2,p_0).$
\end{lemma}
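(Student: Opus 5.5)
Fix $V\in\mathcal{N}_{c,p}$ with trace $v=V(\cdot,0)$; since $m_{c,p}$ is an infimum over $\mathcal{N}_{c,p}$, it suffices to bound $I_{c,p}(V)=\frac{1}{2p}\int_{\mathbb{R}^N}|v|^p\,dx$ from below uniformly in $p\in(2,p_0)$. The plan is to first obtain a uniform lower bound on $\Vert V\Vert_{H^1(\mathbb{R}^{N+1}_+)}$, and then convert it into a lower bound on $\int|v|^p$ through the Nehari identity.

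Start from $J_{c,p}(V)=0$, which reads
\begin{equation*}
\frac{1}{c}\Vert V\Vert_{H^1(\mathbb{R}^{N+1}_+)}^2=\int_{\mathbb{R}^N}\frac{|v|^p-v^2}{p-2}\,dx\leq\int_{\{|v|\geq1\}}\frac{|v|^p-v^2}{p-2}\,dx ,
\end{equation*}
since the integrand is nonpositive on $\{|v|\le1\}$. By Lemma \ref{estimate1}(i), with a fixed $q\in(p_0,\frac{2N}{N-1})$, the right-hand side is at most $C_q\int|v|^q\,dx$. The constant $C_q$ is uniform in $p\in(2,q)$. Lemma \ref{norms} and Lemma \ref{Sobolevembedding}, together with the fact that the $H^1(\mathbb{R}^{N+1}_+)$ norm controls $\Vert v\Vert_{H^{1/2}}$ for fixed $c\geq1$ (because $\mu>0$), then give
\begin{equation*}
\int|v|^q\,dx\leq C\Vert V\Vert_{H^1(\mathbb{R}^{N+1}_+)}^q .
\end{equation*}
Combining, $\Vert V\Vert^2\leq C\Vert V\Vert^q$ with $q>2$. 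Since $V\neq0$ (its trace is nonzero), this yields $\Vert V\Vert_{H^1(\mathbb{R}^{N+1}_+)}\geq\kappa>0$, where $\kappa$ is independent of $p$.

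Next, bound the right-hand side of the Nehari identity in terms of $\int|v|^p$ so that the lower bound transfers. I use the same interpolation as in Lemma \ref{d12}: write $\frac{1}{q}=\frac{1-\theta}{p}+\frac{\theta(N-1)}{2N}$. This gives
\begin{equation*}
\kappa^2\leq\Vert V\Vert^2\leq C\Big(\int|v|^p\Big)^{\frac{q(1-\theta)}{p}}\Vert V\Vert^{q\theta}.
\end{equation*}
The danger is that $\Vert V\Vert$ might be large, so this inequality must be used in a way that still forces $\int|v|^p$ to be bounded below. Dividing, $\Vert V\Vert^{2-q\theta}\leq C(\int|v|^p)^{q(1-\theta)/p}$. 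Provided $q\theta<2$, the lower bound $\Vert V\Vert\geq\kappa$ then gives $\int|v|^p\geq\delta>0$ uniformly.

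The main obstacle is checking that $q\theta<2$ and that the exponents stay bounded away from degenerate values uniformly in $p\in(2,p_0)$. Solving for $\theta$ gives
\begin{equation*}
\theta=\frac{1/p-1/q}{1/p-(N-1)/(2N)} .
\end{equation*}
As $p\to2$ this tends to $\frac{1/2-1/q}{1/(2N)}=N(1-2/q)$, so
\begin{equation*}
q\theta\to N(q-2)<2 \quad\text{iff}\quad q<2+\tfrac{2}{N}.
\end{equation*}
Since $2+\frac{2}{N}=\frac{2N+2}{N}$ may be smaller than the bound $\frac{2N}{N-1}$ when $N$ is large, I would choose $q$ with $p_0<q<\min\{\frac{2N}{N-1},2+\frac1N\}$, shrinking $p_0$ if necessary; this is permitted because $p_0$ in Lemma \ref{d12} only needs to be small. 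Then $q\theta\leq 2-\epsilon$ uniformly in $p\in(2,p_0)$, and the exponent $\frac{q(1-\theta)}{p}$ is bounded between positive constants. With these uniform exponents, $\int|v|^p\geq\delta$ follows, and hence $m_{c,p}\geq\delta_0:=\delta/(2p_0)$ for all $p\in(2,p_0)$.
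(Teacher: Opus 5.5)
Your proof is correct, but it takes a different route from the paper.

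\textbf{The paper's route.} The paper never works on $\mathcal{N}_{c,p}$ directly. For every $U\in H_*$ it proves the mountain-pass estimate
$$I_{c,p}(U)\geq\frac{1}{2c}\Vert U\Vert^2-C\Vert U\Vert^{p_0},$$
with $C$ uniform in $p\in(2,p_0)$. From this it gets $I_{c,p}\geq\delta_0$ on a small sphere $\Vert U\Vert=r_0$. It then uses the fibering maximality of Lemma \ref{notempty}(ii), namely $m_{c,p}=\inf_{U\in H_*}\max_{t>0}I_{c,p}(tU)$, to transfer this bound to the Nehari level.

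\textbf{Your route.} You use the Nehari identity to get a uniform lower bound $\Vert V\Vert\geq\kappa$ on $\mathcal{N}_{c,p}$. You then use an $L^p$--$L^{2N/(N-1)}$ interpolation to turn it into a lower bound on $\int|v|^p=2p\,I_{c,p}(V)$.

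\textbf{Comparison.} Your argument avoids the fiber maximization. The price is the exponent bookkeeping $q\theta<2$, which forces $q<2+\frac{2}{N}$. The paper's route needs only $2<p_0\leq\frac{2N}{N-1}$.

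\textbf{Two small corrections.}
\begin{enumerate}
\item The inequality $2+\frac{2}{N}<\frac{2N}{N-1}$ holds for every $N\geq2$, not only for large $N$.
\item You do not need to shrink $p_0$. In Lemma \ref{d12} the paper requires the exponent $\frac{p_0(1-\theta)}{p}+p_0\theta$ to be at most $\frac32$ for all $p\in(2,p_0)$. Letting $p\to2^+$, this exponent tends to $\frac{p_0+N(p_0-2)}{2}$. The bound therefore already forces $p_0\leq2+\frac{1}{N+1}<2+\frac1N$, so an admissible $q\in(p_0,2+\frac1N)$ always exists.
\end{enumerate}

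\textbf{A side benefit of your approach.} Take $q=p_0$ in your first step. It then gives directly
$$\int_{\mathbb{R}^N}|v|^{p_0}\,dx\geq\frac{\kappa^2}{cC}\quad\text{for all }V\in\mathcal{N}_{c,p},$$
uniformly in $p\in(2,p_0)$. This is exactly the nonvanishing bound that the existence proof later obtains by a separate contradiction argument.
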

\begin{proof}
    Let $U\in H_*$ and denote its trace by $u=U(\cdot,0)$. By Lemma \ref{norms} and Lemma \ref{estimate1} (i), arguing as in the proof of Lemma \ref{d12}, we have
    \begin{align*}
        I_{c,p}(U)&=\frac{1}{2c}\Vert U\Vert_{H^1(\mathbb{R}^{N+1}_+)}^2+\frac{1}{2p}\int_{\mathbb{R}^N}|u|^p\,dx-\frac{1}{2}\int_{\mathbb{R}^N}\frac{|u|^p-u^2}{p-2}\,dx\\
        &\geq\frac{1}{2c}\Vert U\Vert_{H^1(\mathbb{R}^{N+1}_+)}^2-\frac{1}{2}\int_{|u|\geq1}\frac{|u|^p-u^2}{p-2}\,dx\\
        &\geq\frac{1}{2c}\Vert U\Vert_{H^1(\mathbb{R}^{N+1}_+)}^2-C_{p_0}\int_{\mathbb{R}^N}|u|^{p_0}\,dx\\
        &\geq\frac{1}{2c}\Vert U\Vert_{H^1(\mathbb{R}^{N+1}_+)}^2-C\Vert U\Vert_{H^1(\mathbb{R}^{N+1}_+)}^{p_0}.
    \end{align*}
    Since $p_0>2$, we deduce that there exists $r_0>0 ,\delta_0>0$ such that
    $I_{c,p}(U)\geq\delta_0$
   whenever $\Vert U\Vert_{H^1(\mathbb{R}^{N+1}_+)}= r_0$.
   Hence, for every $U\in H_*$, choosing
   \begin{equation*}
       t_0=\frac{r_0}{\Vert U\Vert_{H^1(\mathbb{R}^{N+1}_+)}},
   \end{equation*}
   we have $I_{c,p}(t_0U)\geq\delta_0$.
   By Lemma \ref{notempty} (ii),
   \begin{equation*}
       m_{c,p}=\inf_{U\in H_*}\max_{t>0}I_{c,p}(tU),
   \end{equation*}
   and therefore $m_{c,p}\geq \delta_0$
   for all $p\in(2,p_0)$.
\end{proof}

Now we are ready to prove Theorem \ref{result1}. 
\begin{proof}
    For each $p\in (2,p_0)$, let $u_p$ be the positive radially symmetric decreasing action  ground state of \eqref{eq:power-approximation}, and let $U_p$ be its unique extension. By the Nehari characterization of the power-law action ground states and the previous rescaling, their extensions satisfy
    \begin{equation*}
        I_{c,p}(U_p)=m_{c,p},\quad I'_{c,p}(U_p)=0.
    \end{equation*}
    By Lemma \ref{energycomparison}, we have that $I_{c,p}(U_p)$ is bounded. Then it follows from Lemma \ref{d12} that $\Vert U_p\Vert_{H^1(\mathbb{R}^{N+1}_+)}$ and $\int_{\mathbb{R}^N}\left|\frac{|U_p(x,0)|^p-U_p(x,0)^2}{p-2}\right|\,dx$ are bounded. Therefore, up to a subsequence, there exists $U_0\in H^1(\mathbb{R}^{N+1}_+)$ such that
    $U_p\rightharpoonup U_0$ in $H^1(\mathbb{R}^{N+1}_+)$ as $p\to2^+.$
    Denote $u_0=U_0(\cdot,0)$.
    By Lemma \ref{norms} and the trace embedding theorem, after passing to a further subsequence if necessary,
    \begin{equation*}
        U_p(\cdot,0)\to U_0(\cdot,0)\quad\text{in }L^q_{loc}(\mathbb{R}^N)
    \end{equation*}
    for every $1\leq q<\frac{2N}{N-1}$ and 
    \begin{equation}\label{UptoU0ae}
        U_p(x,0)\to U_0(x,0)\quad\text{a.e. in }\mathbb{R}^N.
    \end{equation}
    In particular, $U_0(\cdot,0)$ is radially symmetric decreasing.
    
        We first show that $U_0$ is a nontrivial solution of \eqref{boundaryvalueproblem1}.
        By Lemma \ref{estimate1} (ii) and \eqref{UptoU0ae},
        \begin{equation*}
            \left|\frac{|U_p(x,0)|^p-U_p(x,0)^2}{p-2}\right|\to U_0(x,0)^2|\log|U_0(x,0)||\quad\text{a.e. in }\mathbb{R}^N.
        \end{equation*}
        Hence, by Fatou's lemma,
        \begin{equation*}
            \int_{\mathbb{R}^N}U_0(x,0)^2|\log \left|U_0(x,0)|\right|\,dx\leq\liminf_{p\to2^+}\int_{\mathbb{R}^N}\left|\frac{|U_p(x,0)|^p-U_p(x,0)^2}{p-2}\right|\,dx<\infty.
        \end{equation*}
        Thus, $U_0\in\mathcal{D}$. Since $I_{c,p}'(U_p)=0$, for every $\Phi\in C_c^\infty(\overline{\mathbb{R}^{N+1}_+})$, we have
        \begin{equation}\label{I'cp}
             \begin{aligned}
           0&=\frac{1}{c}\int_{\mathbb R^{N+1}_+}
           \left(c^2\nabla U_p\cdot\nabla\Phi+m^2c^4U_p\Phi\right)\,dx\,dy+(-mc^2+\mu)\int_{\mathbb{R}^N}U_p(x,0)\Phi(x,0)\,dx\\
           &\quad-\int_{\mathbb{R}^N}\frac{|U_p(x,0)|^{p-2}U_p(x,0)-U_p(x,0)}{p-2}\Phi(x,0)\,dx.
        \end{aligned}
        \end{equation}
         If $U_p(x,0)\geq1$, Lemma \ref{estimate1} (i) yields that there exists $C_{p_0}>0$ such that
         \begin{equation}\label{ugeq1}
             0\leq\frac{(|U_p(x,0)|^{p-2}-1)|U_p(x,0)|}{p-2}\leq C_{p_0}|U_p(x,0)|^{p_0-1}.
         \end{equation}
         Since $U_p(\cdot,0)\to U_0(\cdot,0)$ in $L_{loc}^{p_0}(\mathbb{R}^N)$, we deduce from Vitali’s convergence theorem that
         \begin{equation}\label{DCT1}
             \begin{aligned}
                 \int_{U_p(x,0)\geq1}\frac{|U_p(x,0)|^{p-2}U_p(x,0)-U_p(x,0)}{p-2}\Phi(x,0)\,dx\\\to\int_{U_0(x,0)\geq 1}U_0(x,0)\Phi(x,0)\log |U_0(x,0)|\,dx.
             \end{aligned}
         \end{equation}
        If $U_p(x,0)\leq1$, by Lemma \ref{estimate1} (ii), there exists $C>0$ such that
        \begin{equation*}
            0\leq\frac{(1-|U_p(x,0)|^{p-2})|U_p(x,0)|}{p-2}\leq C.
        \end{equation*}
        Since $\Phi$ has a compact support, the dominated convergence theorem yields
        \begin{equation}\label{DCT}
            \begin{aligned}
                \int_{U_p(x,0)\leq1}\frac{|U_p(x,0)|^{p-2}U_p(x,0)-U_p(x,0)}{p-2}\Phi(x,0)\,dx\\\to\int_{U_0(x,0)\leq 1}U_0(x,0)\Phi(x,0)\log |U_0(x,0)|\,dx.
            \end{aligned}
        \end{equation}
        Combining \eqref{DCT1} and \eqref{DCT}, and passing to the limit in \eqref{I'cp}, we conclude that
        \begin{equation*}
             \begin{aligned}
           0&=\frac{1}{c}\int_{\mathbb R^{N+1}_+}
           \left(c^2\nabla U_0\cdot\nabla\Phi+m^2c^4U_0\Phi\right)\,dx\,dy+(-mc^2+\mu)\int_{\mathbb{R}^N}U_0(x,0)\Phi(x,0)\,dx\\
           &\quad-\int_{\mathbb{R}^N}U_0(x,0)\Phi(x,0)\log |U_0(x,0)|\,dx,
        \end{aligned}
        \end{equation*}
        which implies that $U_0$ solves \eqref{boundaryvalueproblem1}. We claim that there exists $\delta_1>0$ such that
        \begin{equation}\label{delta1}
            \inf\limits_{p\in (2,p_0)}\int_{\mathbb{R}^N}|U_p(x,0)|^{p_0}\,dx\geq\delta_1.
        \end{equation}
       Suppose by contradiction that, up to a subsequence, $U_p(\cdot,0)\to0$ in $L^{p_0}(\mathbb{R}^N)$. By Lemma \ref{estimate1} (i), reasoning as for the proof of \eqref{ugeq1}, we have
        \begin{equation*}
        \begin{aligned}
           0=I'_{c,p}(U_p)U_p&= \frac{1}{c}\Vert U_p\Vert_{H^1(\mathbb{R}^{N+1}_+)}^2-\int_{\mathbb{R}^N}\frac{|U_p(x,0)|^p-U_p(x,0)^2}{p-2}\,dx\\
           &\geq \frac{1}{c}\Vert U_p\Vert_{H^1(\mathbb{R}^{N+1}_+)}^2+\int_{U_p(x,0)\leq\frac{1}{2e}}U_p(x,0)^2\,dx-C_{p_0}|U_p(x,0)|_{p_0}^{p_0}.
        \end{aligned}
        \end{equation*}
        Here we also use the fact that, for all $p$ sufficiently close to $2$, $\frac{x^2-x^p}{p-2}\geq x^2$ for $x\in[0,\frac{1}{2e}]$.
        Thus, $\Vert U_p\Vert_{H^1(\mathbb{R}^{N+1}_+)}\to0$ as $p\to 2^+$.
        Moreover, by Lemma \ref{norms} and the trace embedding theorem, we have
        $U_p(\cdot,0)\to0$ in $L^2(\mathbb{R}^N)\cap L^{p_0}(\mathbb{R}^N)$. Since $2<p<p_0$, the interpolation gives $U_p(\cdot,0)\to0$ in $L^p(\mathbb{R}^N)$.
        Nevertheless, Lemma \ref{lowerboundedness} yields that
        \begin{equation*}
            0<\delta_0\leq m_{c,p}=I_{c,p}(U_p)=I_{c,p}(U_p)-\frac{1}{2}J_{c,p}(U_p)=\frac{1}{2p}\int_{\mathbb{R}^N}|U_p(x,0)|^p\,dx,
        \end{equation*}
        which leads to a contradiction. Thus, \eqref{delta1} holds.
       Since $U_p(\cdot,0)$ is radial, the compact embedding theorem implies that
       \begin{equation*}
           \int_{\mathbb{R}^N}|U_0(x,0)|^{p_0}\,dx=\lim_{p\to2^+}\int_{\mathbb{R}^N}|U_p(x,0)|^{p_0}\,dx\geq\delta_1>0.
       \end{equation*}
       Hence, $U_0\neq0$ and $U_0$ is a nontrivial solution of \eqref{boundaryvalueproblem1}.

       Next, we prove that $U_0$ achieves $m_c$ and $m_{c,p}\to m_c$. By Fatou's lemma and Lemma \ref{energycomparison},
       \begin{equation}\label{mcmc}
           \begin{aligned}
           m_c\leq I_c(U_0)&=\frac{1}{4}\int_{\mathbb{R}^N}U_0(x,0)^2\,dx\\
           &\leq \liminf_{p\to 2^+}\frac{1}{2p}\int_{\mathbb{R}^N}|U_p(x,0)|^p\,dx\\
           &=\liminf_{p\to 2^+}I_{c,p}(U_p)=\liminf_{p\to 2^+}m_{c,p}\\
           &\leq\limsup_{p\to 2^+}m_{c,p}\leq m_c.
       \end{aligned}
       \end{equation}
       Hence $U_0$ achieves $m_c$ and
       $m_{c,p}\to m_c$ as $p\to2^+$. Moreover, every nontrivial weak solution $V\in\mathcal{D}$ of \eqref{boundaryvalueproblem1} belongs to $\mathcal{N}_c$, and hence 
       \begin{equation*}
           I_c(V)\geq m_c=I_c(U_0).
       \end{equation*}
       Consequently, $U_0$ is a ground state of \eqref{boundaryvalueproblem1}, and
       \begin{equation*}
       \begin{aligned}
              m_c&=\min\left\{I_c(V):V\in\mathcal{D}\setminus\{0\}\text{ is a weak solution of }\eqref{boundaryvalueproblem1}\right\}\\
              &=\min\left\{E_c(u):u\in\mathcal{X}\setminus\{0\}\text{ is a weak solution of }\eqref{problem}\right\}.
       \end{aligned}
       \end{equation*}
       In particular, a nontrivial weak solution of \eqref{boundaryvalueproblem1} is an action ground state if and only if it attains the minimum of $I_c$ on $\mathcal{N}_c$.
       
      Finally, we prove that $u_p\to u_0$ in $H^{1/2}(\mathbb{R}^N)$. By Lemma \ref{norms}, it is sufficient to show that $U_p\to U_0$ in $H^1(\mathbb{R}^{N+1}_+)$. 
      Since $U_p\in\mathcal{N}_{c,p}$ and $U_0\in\mathcal{N}_c$, we have
      \begin{equation*}
          \frac{1}{c}\Vert U_p\Vert_{H^1(\mathbb{R}^{N+1}_+)}^2=\int_{\mathbb{R}^N}\frac{|U_p(x,0)|^p-U_p(x,0)^2}{p-2}\,dx
      \end{equation*}
      and
      \begin{equation*}
           \frac{1}{c}\Vert U_0\Vert_{H^1(\mathbb{R}^{N+1}_+)}^2=\int_{\mathbb{R}^N}U_0(x,0)^2\log|U_0(x,0)|\,dx.
      \end{equation*}
     Thus, we only need to prove that
      \begin{equation*}
          \int_{\mathbb{R}^N}\frac{|U_p(x,0)|^p-U_p(x,0)^2}{p-2}\,dx\to\int_{\mathbb{R}^N}U_0(x,0)^2\log |U_0(x,0)|\,dx.
      \end{equation*}
      Since $U_p(x,0)=U_p(|x|,0)>0$ is radially nonincreasing and $|U_p(\cdot,0)|_{p_0}$ is bounded, there exists $R>0$ such that
      \begin{equation}\label{updecreasing}
          \left\{x\in\mathbb{R}^N:U_p(x,0)\geq1\right\}\subset B_R(0).
      \end{equation}
      If $U_p(x,0)\geq1$,
     by \eqref{ugeq1}, we have
      \begin{equation*}
          0\leq\frac{|U_p(x,0)|^{p}-U_p(x,0)^2}{p-2}\leq C_{p_0}|U_p(x,0)|^{p_0}.
      \end{equation*}
       Since $U_p(\cdot,0)\to U_0(\cdot,0)$ in $L_{loc}^{p_0}(\mathbb{R}^N)$, using \eqref{updecreasing}, we apply Vitali's convergence theorem to get
       \begin{equation}\label{DCT2}
           \int_{U_p(x,0)\geq1}\frac{|U_p(x,0)|^p-U_p(x,0)^2}{p-2}\,dx\to\int_{U_0(x,0)\geq1
           }U_0(x,0)^2\log |U_0(x,0)|\,dx.
       \end{equation}
       If $U_p(x,0)\leq1$, by Fatou's lemma, 
       \begin{equation}\label{leq1convergence}
           -\int_{U_0(x,0)\leq1
           }U_0(x,0)^2\log |U_0(x,0)|\,dx\leq\liminf_{p\to2^+}\left(-\int_{U_p(x,0)\leq1}\frac{|U_p(x,0)|^p-U_p(x,0)^2}{p-2}\,dx\right).
       \end{equation}
      Since $U_p\in\mathcal{N}_{c,p}$ and $U_0\in\mathcal{N}_c$, we obtain
      \begin{align*}
         \int_{U_0(x,0)\geq1}&U_0(x,0)^2\log |U_0(x,0)|\,dx+\int_{U_0(x,0)\leq1}U_0(x,0)^2\log |U_0(x,0)|\,dx\\
         &=\frac{1}{c}\Vert U_0\Vert_{H^1(\mathbb{R}^{N+1}_+)}^2\leq \liminf_{p\to2^+}\frac{1}{c}\Vert U_p\Vert_{H^1(\mathbb{R}^{N+1}_+)}^2\\
         &=\liminf_{p\to2^+}\left(\int_{U_p(x,0)\geq1}\frac{|U_p(x,0)|^p-U_p(x,0)^2}{p-2}\,dx+\int_{U_p(x,0)\leq1}\frac{|U_p(x,0)|^p-U_p(x,0)^2}{p-2}\,dx\right).
      \end{align*}
      Using \eqref{DCT2}, we deduce that     \begin{equation*}
         \limsup_{p\to2^+}\left(-\int_{U_p(x,0)\leq1}\frac{|U_p(x,0)|^p-U_p(x,0)^2}{p-2}\,dx\right)\leq-\int_{U_0(x,0)\leq1}U_0(x,0)^2\log |U_0(x,0)|\,dx,
      \end{equation*}
      which, combined with \eqref{leq1convergence}, yields
      \begin{equation*}
           \lim_{p\to2^+}\int_{U_p(x,0)\leq1}\frac{|U_p(x,0)|^p-U_p(x,0)^2}{p-2}\,dx=\int_{U_0(x,0)\leq1}U_0(x,0)^2\log |U_0(x,0)|\,dx.
      \end{equation*}
      As a result,
      \begin{equation*}
          \lim_{p\to2^+}\int_{\mathbb{R}^N}\frac{|U_p(x,0)|^p-U_p(x,0)^2}{p-2}\,dx=\int_{\mathbb{R}^N}U_0(x,0)^2\log |U_0(x,0)|\,dx.
      \end{equation*}
      Therefore, we conclude that $U_p\to U_0$ in $H^1(\mathbb{R}^{N+1}_+)$,  and consequently, $u_p\to u_0$ in $H^{1/2}(\mathbb{R}^N)$. Moreover, since $u_p>0$ and 
      \begin{equation*}
          u_0(x)=\lim\limits_{p\to2^+}u_p(x),
      \end{equation*}
      we have $u_0\geq0$.
      Since $U_0\neq0$ and $I_c(U_0)<2m_c$,
      Lemma \ref{constantsign} implies that $u_0$ is strictly positive.  
      This completes the proof.
\end{proof}

\subsection{Qualitative properties}

\begin{lemma}\label{Linfty-regularity}
   If $U\in\mathcal{D}$ is a weak solution of \eqref{boundaryvalueproblem1}, then its trace $u\in L^p(\mathbb{R}^N)$ for all $p\in [2,\infty]$ and $U\in L^\infty(\mathbb{R}^{N+1}_+)$.
\end{lemma}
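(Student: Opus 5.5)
The plan is a Moser iteration on the extension problem \eqref{boundaryvalueproblem1}, using truncated power test functions, in the spirit of Choi--Seok. The point is that the logarithmic nonlinearity is sublinear-plus-$\varepsilon$. For every $\varepsilon>0$ there is $C_\varepsilon$ with
\begin{equation*}
\bigl((mc^2-\mu)s+s\log|s|\bigr)s\le C_\varepsilon\bigl(s^2+|s|^{2+\varepsilon}\bigr)
\end{equation*}
for all $s$, and moreover $s\log|s|\cdot s\le 0$ for $|s|\le1$. So the boundary term can be handled exactly as a power nonlinearity with exponent $p=2+\varepsilon$ close to $2$, well below the trace-critical exponent $\frac{2N}{N-1}$.

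Concretely, I would work with $U^+$ and $U^-$ separately; by symmetry it suffices to treat $U^+$. For $T>0$ and $\beta\ge0$, set $U_T=\min\{U^+,T\}$ and take the test function
\begin{equation*}
\Phi=U\,U_T^{2\beta}.
\end{equation*}
This is admissible after a density argument, since $U\in H^1$, $\Phi\in H^1(\mathbb{R}^{N+1}_+)$, and the weak formulation extends from $C_c^\infty(\overline{\mathbb{R}^{N+1}_+})$ to such $\Phi$. The extension step needs $u\phi\log|u|\in L^1$, which follows from $u\in\mathcal{X}$ and the boundedness of $U_T^{2\beta}$. The left side controls
\begin{equation*}
\frac{c}{(1+\beta)^2}\int|\nabla(U U_T^{\beta})|^2+mc^3\int U^2U_T^{2\beta}.
\end{equation*}
By Lemma \ref{norms} together with the trace inequality, this dominates $C\,|u u_T^\beta|_{2N/(N-1)}^2/(1+\beta)^2$. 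On the right, after moving $-\mu\int u^2u_T^{2\beta}$ and $mc^2$ appropriately (the $mc^2$ boundary term is absorbed via the $mc^3\int U^2$ bulk term, or equivalently via the norm equivalence used in the paper), we get a bound by
\begin{equation*}
C\int_{\{u>1\}}u^{2}u_T^{2\beta}\log u+C\int u^2u_T^{2\beta}\le C\int (u^{2+\varepsilon}+u^2)u_T^{2\beta}.
\end{equation*}
Splitting $u^{\varepsilon}\le K+u^\varepsilon\chi_{\{u>K\}}$ and using Hölder with $|u^\varepsilon\chi_{\{u>K\}}|_{N/\varepsilon'}$ small for $K$ large, which requires $\varepsilon<\frac{2}{N-1}$, the first iteration step gives $u\in L^{2\cdot\frac{N}{N-1}(1+\beta)}$. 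Iterating with $\beta_{k+1}+1=\frac{N}{N-1}(\beta_k+1)$ and tracking constants of the form $C(1+\beta)^{2}$ yields the usual geometric-product bound. Hence $|u|_\infty\le C$ and $u\in L^p$ for all $p\in[2,\infty]$ by interpolation with $L^2$.

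Finally, $U\in L^\infty(\mathbb{R}^{N+1}_+)$ follows from the maximum principle for $-c^2\Delta+m^2c^4$ in the half-space. The function $(U-|u|_\infty)^+$ is an admissible test function for the interior equation, vanishes on $y=0$, and testing gives
\begin{equation*}
\int c^2|\nabla (U-M)^+|^2+m^2c^4U(U-M)^+=0,
\end{equation*}
so $U\le M$. The same argument applies from below. Alternatively one can use the Poisson-kernel representation: since the kernel is positive with mass at most $1$, $|U(\cdot,y)|\le|u|_\infty$.

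The main obstacle I expect is the first Moser step, making the absorption uniform in $T$. The logarithmic term has no $L^q$ bound with $q>2$ beyond what $H^{1/2}$ gives. It must be handled using only the subcritical gap between $2+\varepsilon$ and $\frac{2N}{N-1}$, combined with the smallness of $\int_{\{u>K\}}u^{2N/(N-1)}$ as $K\to\infty$. Since $u\in L^{2N/(N-1)}$ is already known, this smallness is available.
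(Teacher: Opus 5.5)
Your scheme for the trace is the paper's: Moser iteration on the extension with $U^+U_T^{2\beta}$, the logarithm dominated by $u^{2+\varepsilon}$ on $\{u>1\}$, the trace inequality of Lemma \ref{norms}, and exponents multiplied by $\frac{N}{N-1}$. The absorption step as you set it up, however, does not give the constants you claim. With $w=uu_T^{\beta}$, the term to be absorbed is $C(1+\beta)^2\,|u^{\varepsilon}\chi_{\{u>K\}}|_N\,|w|_{\frac{2N}{N-1}}^2$, so $K$ must satisfy $C(1+\beta)^2|u^{\varepsilon}\chi_{\{u>K\}}|_N\le\frac12$. Hence $K=K(\beta)$ must increase along the iteration, and the constant of the $k$-th step is $C(1+\beta_k)^2\bigl(1+K(\beta_k)^{\varepsilon}\bigr)$, not $C(1+\beta_k)^2$. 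If your only input is the qualitative fact $\int_{\{u>K\}}u^{\frac{2N}{N-1}}\to0$, nothing controls the growth of $K(\beta_k)$, and $\prod_k\bigl(C(1+\beta_k)^2(1+K(\beta_k)^{\varepsilon})\bigr)^{1/(2(1+\beta_k))}$ need not be finite. This is exactly the Brezis--Kato situation (a potential in the critical space $L^N$), which yields $u\in L^p$ for all $p<\infty$ but not $u\in L^\infty$. So the real obstacle is uniformity in $\beta$. Uniformity in $T$ is automatic, since $|w|_{\frac{2N}{N-1}}\le T^{\beta}|u|_{\frac{2N}{N-1}}<\infty$ at each finite $T$.

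The step can be repaired. Because you take $\varepsilon<\frac{2}{N-1}$ strictly, Chebyshev gives $\int_{\{u>K\}}u^{N\varepsilon}\le K^{N\varepsilon-\frac{2N}{N-1}}|u|_{\frac{2N}{N-1}}^{\frac{2N}{N-1}}$. So $K(\beta)$ can be taken polynomial in $1+\beta$, the step constants become $C(1+\beta)^a$ for some $a>2$, and the product still converges; you need to say this. The paper avoids the splitting altogether. It takes $\varepsilon=q_0-2\le\frac{1}{N-1}$, so $g=u^{\varepsilon}\chi_{\{u\ge1\}}\in L^{2N}$ is a better-than-critical potential. It then uses $\int gw^2\le|g|_{2N}|w|_2|w|_{\frac{2N}{N-1}}$ plus Young to get $|w|_{\frac{2N}{N-1}}^2\le C_*(1+\beta)^2|w|_2^2$ with $C_*$ independent of $\beta$ and $T$, which is precisely the bound you asserted.

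Your final step, by contrast, is correct and simpler than the paper's. Since a weak solution in $\mathcal{D}$ is the Poisson extension of its trace, testing the interior equation with $(U-|u|_\infty)^+\in H^1_0(\mathbb{R}^{N+1}_+)$ gives $|U|\le|u|_\infty$ at once; so does using the positive kernel of total mass $e^{-mcy}\le1$. The paper instead runs a second Moser iteration in the half-space via the Sobolev embedding of $H^1(\mathbb{R}^{N+1}_+)$.
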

\begin{proof}
    Set $U^+:=\max\left\{U,0\right\}, U^-:=\max\left\{-U,0\right\}$, and denote their traces by $u^+$ and $u^-$, respectively. We only prove the result for $U^+$ since the argument for $U^-$ is similar.

    Choose $q_0\in\left(2,2+\frac{1}{N-1}\right]$ and define $g(x):=(u^+(x))^{q_0-2}\chi_{\{u^+\geq1\}}(x)$. Since $u^+\in H^{1/2}(\mathbb{R}^N)$, the Sobolev embedding theorem implies $u^+\in L^{\frac{2N}{N-1}}(\mathbb{R}^N)$. Moreover, $2N(q_0-2)\leq\frac{2N}{N-1}$.
    Therefore, on the set $\{u^+\geq 1\}$,
    \begin{equation*}
        |g|^{2N}=(u^+)^{2N(q_0-2)}\leq(u^+)^\frac{2N}{N-1},
    \end{equation*}
    and hence $g\in L^{2N}(\mathbb{R}^N)$.
    For $\gamma>0$ and $T>0$, define $U_T:=\min\left\{U^+,T\right\}$, $u_T:=U_T(\cdot,0)$.
    We take  $\Phi:=U^+U_T^{2\gamma}$ as a test function.
    A direct computation gives
 \begin{align*}
       \frac{2\gamma+1}{c}&\int_{U^+\leq T}c^2|\nabla U^+|^2U_T^{2\gamma}\,dx\,dy+\frac{1}{c}\int_{U^+>T}c^2|\nabla U^+|^2U_T^{2\gamma}\,dx\,dy\\
       &\quad+\frac{1}{c}\int_{\mathbb{R}^{N+1}_+}m^2c^4|U^+|^2U_T^{2\gamma}\,dx\,dy\\
       &=(mc^2-\mu)\int_{\mathbb{R}^N}|u^+|^2u_T^{2\gamma}\,dx+\int_{\mathbb{R}^N}|u^+|^2u_T^{2\gamma}\log u^+\,dx\\
       &\leq C\int_{\mathbb{R}^N}|u^+|^2u_T^{2\gamma}\,dx+C_{q_0}\int_{\mathbb{R}^N}g(x)|u^+|^2u_T^{2\gamma}\,dx.
    \end{align*}
Thus,
\begin{align*}
    \frac{1}{c}&\int_{\mathbb{R}^{N+1}_+}\left(c^2|\nabla(U^+U_T^\gamma)|^2+m^2c^4|U^+U_T^\gamma|^2\right)\,dx\,dy\\
    &\leq(\gamma+1)\left(C\int_{\mathbb{R}^N}|u^+|^2u_T^{2\gamma}\,dx+C_{q_0}\int_{\mathbb{R}^N}g(x)|u^+|^2u_T^{2\gamma}\,dx\right).
\end{align*}
   Since $g\in L^{2N}(\mathbb{R}^N)$, H\"older's inequality gives
   \begin{equation*}
       \int_{\mathbb{R}^N}g(x)|u^+|^2u_T^{2\gamma}\,dx\leq |g|_{2N}|u^+u_T^\gamma|_2|u^+u_T^\gamma|_{\frac{2N}{N-1}}.
   \end{equation*}
   Applying the trace embedding and Young’s inequality at this finite value of $T$, we find a constant $C_*>0$, independent of $T$ and $\gamma$, such that
   \begin{equation*}
       |u^+u_T^\gamma|^2_{\frac{2N}{N-1}}\leq C_*(\gamma+1)^2|u^+u_T^\gamma|_2^2.
   \end{equation*}
   Now assume that $u^+\in L^{2(1+\gamma)}(\mathbb{R}^N)$. Since $|u^+u_T^\gamma|_2^2\leq|u^+|_{2(1+\gamma)}^{2(1+\gamma)}$,
    passing to the limit $T\to\infty$, we obtain
    \begin{equation}\label{Moser3}
    |u^+|_{\frac{2N}{N-1}(1+\gamma)}\leq \left(C_*(\gamma+1)^2\right)^{\frac{1}{2(1+\gamma)}}|u^+|_{2(1+\gamma)}.
     \end{equation}
     Besides, Fatou’s lemma yields
     \begin{equation}\label{Moser1}
    \begin{aligned}
     \frac{1}{c}&\int_{\mathbb{R}^{N+1}_+}c^2|\nabla(|U^+|^{1+\gamma})|^2\,dx\,dy+\frac{1}{c}\int_{\mathbb{R}^{N+1}_+}m^2c^4||U^+|^{1+\gamma}|^2\,dx\,dy\\
     &\leq (\gamma+1)\left(C\int_{\mathbb{R}^N}|u^+|^{2(1+\gamma)}\,dx+C_{q_0}\int_{\mathbb{R}^N}g(x)|u^+|^{2(1+\gamma)}\,dx\right).
\end{aligned}
\end{equation}
Let $\beta=\frac{N}{N-1}$. Since
$u^+\in L^{2\beta}(\mathbb{R}^N)$, choosing $1+\gamma=\beta$ in \eqref{Moser3}, we get 
\begin{equation*}
     |u^+|_{2\beta^2}\leq \left(C_*
     \beta^2\right)^{\frac{1}{2\beta}}|u^+|_{2\beta}.
\end{equation*}
Repeating this argument, we obtain
\begin{equation*}
    |u^+|_{2\beta^{k+1}}\leq\left(\prod_{j=1}^{k}(C_*\beta^{2j})^\frac{1}{2\beta^j}\right)|u^+|_{2\beta}.
\end{equation*}
Note that
\begin{equation*}
    \log \left(\prod_{j=1}^{k}(C_*\beta^{2j})^\frac{1}{2\beta^j}\right)=\frac{1}{2}\sum_{j=1}^{k}\frac{1}{\beta^j}\log C_*+\sum_{j=1}^{k}\frac{j\log\beta}{\beta^j}.
\end{equation*}
Since $\beta>1$, we have
\begin{equation*}
    \sum_{j=1}^{\infty}\frac{1}{\beta^j}<\infty,\quad \sum_{j=1}^{\infty}\frac{j}{\beta^j}<\infty.
\end{equation*}
Hence, there exists a constant $K>0$, independent of $k$, such that
\begin{equation*}
    |u^+|_{2\beta^{k+1}}\leq K|u^+|_{2\beta}.
\end{equation*}
By interpolation, we deduce that $u^+\in L^p(\mathbb{R}^N)$ for all $p\in[2,\infty).$ Moreover,
\begin{equation*}
    |u^+|_\infty=\lim_{k\to\infty}|u^+|_{2\beta^k}<\infty.
\end{equation*}
Consequently, $u^+\in L^\infty(\mathbb{R}^N)$. The same argument gives $u^-\in L^p(\mathbb{R}^N)$ for all $p\in[2,\infty],$ and hence $u\in L^p(\mathbb{R}^N)$ for all 
$p\in[2,\infty]$.

Finally, set $M:=\max\{1,|u^+|_2,|u^+|_\infty\}$.
For every $r\geq 2$, it follows that
\begin{equation*}
   \int_{\mathbb{R}^N}|u^+|^r\,dx\leq |u^+|_\infty^{r-2}|u^+|_2^2\leq  M^r.
\end{equation*}
Since $g(x)=(u^+(x))^{q_0-2}\chi_{\{u^+\geq1\}}(x)$,
we have $|g|_\infty<\infty.$
Returning to \eqref{Moser1}, for every $\gamma>0$,
\begin{align*}
     \Vert |U^+|^{1+\gamma}\Vert_{H^1(\mathbb{R}^{N+1}_+)}^2&\leq C(\gamma+1)\left(\int_{\mathbb{R}^N}|u^+|^{2(1+\gamma)}\,dx+\int_{\mathbb{R}^N}g(x)|u^+|^{2(1+\gamma)}\,dx\right)\\
     &\leq C(\gamma+1)(1+|g|_\infty)M^{2(1+\gamma)}\\
     &\leq C(\gamma+1)M^{2(1+\gamma)}.
\end{align*}
By the Sobolev embedding theorem,
\begin{equation*}
    \Vert U^+\Vert_{\frac{2(N+1)(1+\gamma)}{N-1}}=\Vert |U^+|^{(1+\gamma)}\Vert_{\frac{2(N+1)}{N-1}}^{\frac{1}{1+\gamma}}\leq C^{\frac{1}{2(1+\gamma)}}(\gamma+1)^{\frac{1}{2(1+\gamma)}}M.
\end{equation*}
Letting $\gamma\to\infty$ gives $U^+\in L^\infty(\mathbb{R}^{N+1}_+)$. The same argument applies to $U^-$, and therefore $U\in L^\infty(\mathbb{R}^{N+1}_+)$.
This completes the proof.
\end{proof}

\begin{lemma}\label{C2-regularity}
    If $U\in\mathcal{D}\cap L^\infty(\mathbb{R}^{N+1}_+)$ is a weak solution of \eqref{boundaryvalueproblem1}, then for every $\alpha\in (0,1)$,
    \begin{equation*}
        U\in C^{1,\alpha}(\mathbb{R}^N\times[0,\infty))\cap C^2(\mathbb{R}^{N+1}_+).
    \end{equation*}
    Consequently, $U$ is a classical solution of \eqref{boundaryvalueproblem1}.
\end{lemma}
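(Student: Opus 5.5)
The plan is a bootstrap: first obtain Hölder regularity of the trace from the Neumann datum, then lift it to $C^{1,\alpha}$ up to the boundary by Schauder-type estimates for the extension problem, and finally get interior $C^2$ from the fact that $-c^2\Delta U+m^2c^4U=0$ is a constant-coefficient elliptic equation. By Lemma \ref{Linfty-regularity} we already know $U\in L^\infty(\mathbb{R}^{N+1}_+)$, so $u=U(\cdot,0)\in L^\infty(\mathbb{R}^N)$. The Neumann datum is
\begin{equation*}
h(x):=\frac{1}{c}\left((mc^2-\mu)u+u\log|u|\right).
\end{equation*}
Since $|s\log|s||\le C$ on bounded sets, $h\in L^\infty(\mathbb{R}^N)$.

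The first step uses local regularity for Neumann problems. I would reflect $U$ evenly in $y$; this gives a solution on $\mathbb{R}^{N+1}$ of $(-\Delta+m^2c^2)\tilde U=2h\,\mathcal{H}^N\llcorner\{y=0\}$ in the distributional sense. Equivalently, one can use the standard estimates of Cabré–Sire type for $-\Delta U+m^2c^2U=0$ in $B_2^+$ with $-\partial_yU=h\in L^\infty$. Either route gives $U\in C^{0,\beta}(\overline{B_1^+})$ for every $\beta\in(0,1)$, with bounds uniform in the center, since they depend only on $|U|_\infty$ and $|h|_\infty$. So $u\in C^{0,\beta}(\mathbb{R}^N)$.

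The second step is the delicate one, because $f(s)=s\log|s|$ is not Lipschitz at $s=0$. It is, however, Hölder continuous of every order $\gamma<1$ on bounded sets: $|f(s)-f(t)|\le C_\gamma|s-t|^\gamma$ for $|s|,|t|\le |u|_\infty$. Indeed $f$ is continuous, and $|f'(s)|=|1+\log|s||\lesssim |s|^{\gamma-1}$ near $0$. Hence $h\in C^{0,\beta\gamma}$, and Schauder estimates for the Neumann problem (for the even reflection, or via the Dirichlet-to-Neumann map) give $U\in C^{1,\beta\gamma}(\mathbb{R}^N\times[0,\infty))$ locally uniformly. With $\beta,\gamma$ arbitrary in $(0,1)$, the exponent $\beta\gamma$ can be any $\alpha\in(0,1)$.

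I would not try to push beyond $C^{1,\alpha}$ at the boundary: $h$ is only Hölder at zeros of $u$, so $C^{1,\alpha}$ is the optimal statement. I would also check that the local Schauder constants are translation-invariant so that the estimate holds globally on $\mathbb{R}^N\times[0,\infty)$. The last step is the interior one. Since $-c^2\Delta U+m^2c^4U=0$ weakly in $\mathbb{R}^{N+1}_+$, Weyl's lemma type arguments (or interior elliptic regularity) give $U\in C^\infty(\mathbb{R}^{N+1}_+)$, in particular $C^2$. Together with $C^1$ up to $y=0$ and the weak Neumann identity, integrating by parts against $\Phi\in C_c^\infty(\overline{\mathbb{R}^{N+1}_+})$ recovers $-c\,\partial_yU(x,0)=(mc^2-\mu)u+u\log|u|$ pointwise. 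Therefore $U$ is a classical solution of \eqref{boundaryvalueproblem1}.
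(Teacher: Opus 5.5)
Your proposal is correct and follows the same bootstrap as the paper. A bounded trace gives a H\"older trace through the Neumann regularity of the extension problem. Since $s\log|s|$ is H\"older of every order $\eta<1$ on bounded sets, the Neumann datum is then $C^{0,\alpha}$ for arbitrary $\alpha\in(0,1)$, and Schauder theory yields $C^{1,\alpha}$ up to $y=0$ and $C^2$ in the interior. The only difference is cosmetic: the paper obtains the first H\"older step by feeding $h\in L^q$ for all $q\in(2,\infty]$ into the argument of Coti Zelati--Nolasco's Proposition 3.9, whereas you use $h\in L^\infty$ with local Cabr\'e--Sire type estimates or an even reflection.
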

\begin{proof}
    Let $u=U(\cdot,0)$. By Lemma \ref{Linfty-regularity}, we already know that $u\in L^p(\mathbb{R}^N)$ for all $p\in [2,\infty]$. Define
    \begin{equation*}
        h(x):=(mc^2-\mu)u(x)+u(x)\log|u(x)|.
    \end{equation*}
  For each $q\in(2,\infty)$, choose  $0<\epsilon<q-2$. It holds that 
  \begin{equation*}
      2<q-\epsilon<q<q+\epsilon<\infty.
  \end{equation*}
  Hence $u\in L^{q-\epsilon}(\mathbb{R}^N)\cap L^{q+\epsilon}(\mathbb{R}^N)$. Moreover, there exists a constant $C=C(q,\epsilon)>0$ such that
    \begin{equation*}
        |s\log |s||^q\leq C\left(|s|^{q+\epsilon}+|s|^{q-\epsilon}\right),\quad s\in\mathbb{R}.
    \end{equation*}
   Therefore, $u\log |u|\in L^q(\mathbb{R}^N)$ for all $q\in (2,\infty)$. Besides, it follows from  $u\in L^\infty(\mathbb{R}^N)$ that $u\log|u|\in L^\infty(\mathbb{R}^N)$.
    Consequently, $h\in L^q(\mathbb{R}^N)$ for all $q\in (2,\infty]$.
   Arguing as in the proof of \cite[Proposition 3.9]{Zelati2011}, we obtain that, for every $\alpha\in(0,1)$, 
   \begin{equation*}
       U\in C^{0,\alpha}(\mathbb{R}^N\times [0,\infty))\cap W^{1,q}(\mathbb{R}^N\times(0,R))
   \end{equation*}
   for every $R>0$ and $q\in(2,\infty)$. In particular,
   \begin{equation*}
       u\in C^{0,\alpha}(\mathbb{R}^N).
   \end{equation*}
   Recall that
    \begin{equation*}
         f(s)=\begin{cases}
      s\log |s|,&\quad s\neq0,\\
     0,&\quad s=0.
    \end{cases}
    \end{equation*}
     By \cite{dAvenia2015}, $f\in C^{0,\eta}([a,b])$ for all $a,b\in\mathbb{R}$ with $a<b$ and any $\eta\in(0,1)$. Given $\alpha\in(0,1)$, choose $\gamma,\eta\in(0,1)$ with $\gamma\eta>\alpha$.
     Since $u\in C^{0,\gamma}(\mathbb{R}^N)\cap L^\infty(\mathbb{R}^N)$, the composition $f(u)$, and hence $h$, belongs to $C^{0,\alpha}(\mathbb{R}^N)$. 
     Applying  \cite[Proposition 3.9]{Zelati2011}, we conclude that
     $ U\in C^{1,\alpha}(\mathbb{R}^N\times [0,\infty))\cap C^2(\mathbb{R}^{N+1}_+)$ is a classical solution of \eqref{boundaryvalueproblem1}.
\end{proof}

\begin{lemma}\label{constantsign}
    Let $U\in\mathcal{D}$ be a nontrivial solution of \eqref{boundaryvalueproblem1} with $I_c(U)< 2m_c$. Then its trace $u=U(\cdot,0)$ has a constant sign.
\end{lemma}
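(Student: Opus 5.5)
Argue by contradiction: suppose both $u^+\neq0$ and $u^-\neq0$, and show that $U^+$ and $-U^-$ each lie on the Nehari set $\mathcal N_c$. That forces $I_c(U)\geq 2m_c$.

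\textbf{Step 1: the splitting.} Write $U=U^+-U^-$, where $U^\pm=\max\{\pm U,0\}\in H^1(\mathbb{R}^{N+1}_+)$. Their traces are $u^\pm$, and since $u^{\pm2}|\log u^\pm|\leq u^2|\log|u||$ we get $U^\pm\in\mathcal D$. The supports of $\nabla U^+$ and $\nabla U^-$ are disjoint up to null sets, so
\begin{equation*}
\Vert U\Vert_{H^1(\mathbb{R}^{N+1}_+)}^2=\Vert U^+\Vert_{H^1(\mathbb{R}^{N+1}_+)}^2+\Vert U^-\Vert_{H^1(\mathbb{R}^{N+1}_+)}^2,
\qquad
\int_{\mathbb{R}^N}u^2\log|u|\,dx=\int_{\mathbb{R}^N}(u^+)^2\log u^+\,dx+\int_{\mathbb{R}^N}(u^-)^2\log u^-\,dx .
\end{equation*}
Hence $I_c(U)=I_c(U^+)+I_c(U^-)$.

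\textbf{Step 2: testing the equation with $U^\pm$.} We want $J_c(U^+)=0$, i.e.\ to test the weak formulation with $\Phi=U^+$. The definition of weak solution only allows $\Phi\in C_c^\infty(\overline{\mathbb{R}^{N+1}_+})$, so an approximation is needed, and this is the main technical point.

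I would first invoke Lemma \ref{Linfty-regularity}, which gives $U\in L^\infty$, and Lemma \ref{C2-regularity}, which gives that $U$ is $C^{1,\alpha}$ up to the boundary. Then approximate $U^+$ by $\Phi_n=\eta_n\,(U^+\ast\rho_n)$, a cutoff times a mollification (extending evenly in $y$ before mollifying). With this choice:
\begin{itemize}
\item the left-hand side converges by $H^1$ convergence $\Phi_n\to U^+$;
\item on the right-hand side the integrand is $u\log|u|\,\phi_n$. Since $u\in L^\infty$, the function $u\log|u|$ is bounded. Since $|u\log|u|\,\phi_n|\lesssim |u\log|u||\,\mathcal M(u^+)$ and the maximal function is bounded in $L^2$ (or one uses $L^2$ convergence of $\phi_n$), the right-hand side converges provided $u\log|u|\in L^2$. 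This holds because $|s\log|s||\leq C(|s|^{1-\epsilon}+|s|^{1+\epsilon})$ near zero… and actually $|s\log|s||^2\leq C(s^{2-2\epsilon}+s^{2+2\epsilon})$ only gives $L^2$ if $u\in L^{2-2\epsilon}$, which is not available.
\end{itemize}
The cleaner route is to split instead: $\int u\log|u|\,\phi_n=\int_{\{|u|\geq e^{-1}\}}+\int_{\{|u|<e^{-1}\}}$.
\begin{itemize}
\item On $\{|u|\geq e^{-1}\}$ the factor $u\log|u|$ is bounded and the set has finite measure, so $L^2$ convergence of $\phi_n$ suffices.
\item On $\{|u|<e^{-1}\}$ the trouble is only integrability of $u\log|u|\,\phi_n$. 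A pointwise bound by a fixed integrable function is not obvious, so I would switch to the truncations $\phi_n=\min\{U^+,n\}$ with a spatial cutoff, or more simply to the test function $\Phi=U^+\eta_R$ directly, which is Lipschitz and compactly supported. Density of $C_c^\infty$ in $H^1$ for compactly supported Lipschitz functions, together with the boundedness of $u\log|u|$ on compact sets, passes the test.
\end{itemize}
Finally let $R\to\infty$. The right-hand side equals $\int (u^+)^2\log u^+\,\eta_R$, which converges by dominated convergence since $u^2|\log|u||\in L^1$. On the left, $\nabla(U^+\eta_R)$ converges in $L^2$. This yields $J_c(U^+)=0$, and similarly $J_c(U^-)=0$.

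\textbf{Step 3: conclusion.} From Step 2, $U^\pm\in\mathcal N_c$, so $I_c(U^\pm)\geq m_c$. By Step 1, $I_c(U)\geq 2m_c$, contradicting the hypothesis $I_c(U)<2m_c$. Hence $u^+\equiv0$ or $u^-\equiv0$.

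If the goal is strict positivity, which is what the end of the proof of Theorem \ref{result1} uses, one more step is needed. Apply a strong maximum principle to the extension: $U\geq0$ solves $-c^2\Delta U+m^2c^4U=0$, so $U>0$ in the interior. On the boundary, a Hopf-type argument applies: at a boundary zero $x_0$ of $u$ the Neumann data $-cU_y=(mc^2-\mu)u+u\log u$ vanishes, while Hopf gives $U_y(x_0,0)>0$, which is a contradiction. Together with the regularity from Lemma \ref{C2-regularity}, this gives $u>0$.
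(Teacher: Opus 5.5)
Your proposal is correct and follows the paper's argument: split $U=U^+-U^-$, test the equation with $U^\pm$ to get $J_c(U^\pm)=0$ and hence $U^\pm\in\mathcal{N}_c$, and conclude $I_c(U)=I_c(U^+)+I_c(U^-)\geq 2m_c$. Your cutoff argument with $U^+\eta_R$ justifies why $U^\pm$ are admissible test functions, which the paper takes for granted. In the optional positivity step you should also justify $U\geq 0$ from $u\geq 0$ before invoking the strong maximum principle and the Hopf lemma; the paper does this by testing with $U^-$.
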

\begin{proof}
  Suppose that $u$ changes sign, so the traces $u^\pm=U^\pm(\cdot,0)$ are nontrivial. Since $U$ solves \eqref{boundaryvalueproblem1},
  testing the equation with $U^+$ and $-U^-$ gives
   \begin{equation*}
      J_c(U^+)=J_c(U^-)=0.
   \end{equation*}
   Therefore, $U^\pm\in\mathcal{N}_c$ and $ I_c(U^\pm)\geq m_c$, and hence
   \begin{align*}
       I_c(U)=I_c(U^+)+I_c(U^-)\geq 2m_c,
   \end{align*}
   contradicting $I_c(U)<2m_c$. Thus, $u$ does not change sign. Without loss of generality, assume that $u\geq0$.  
    We next prove that $U\geq0$. Taking $U^-$ as a test function, since $u^-=0$, we obtain
   \begin{equation*}
       -\frac{1}{c}\int_{\mathbb{R}^{N+1}_+}\left(c^2|\nabla U^-|^2+m^2c^4|U^-|^2\right)\,dx\,dy=0.
   \end{equation*}
   Hence, $U^-\equiv0$, and consequently, $U\geq0$. Since $U$ solves 
   \eqref{boundaryvalueproblem1}, the strong maximum principle says that $U>0$ in $\mathbb{R}^{N+1}_+$.
  It remains to prove that $u$ is strictly positive. Arguing indirectly, we suppose that there exists $x_0\in\mathbb{R}^N$ such that $u(x_0)=0$. Then $U$ attains its global minimum at the boundary point $(x_0,0)$. The Hopf lemma implies that 
  \begin{equation*}
      \frac{\partial U}{\partial\nu}(x_0,0)<0.
  \end{equation*}
   Nevertheless, it follows from \eqref{boundaryvalueproblem1} that
   \begin{equation*}
       -\frac{\partial U}{\partial\nu}(x_0,0)=\frac{\partial U}{\partial y}(x_0,0)=\left(-mc+\frac{\mu}{c}\right)u(x_0)-\frac{1}{c}u(x_0)\log |u(x_0)|=0,
   \end{equation*}
   which leads to a contradiction. Therefore, $u$ is strictly positive.
   The case $u\leq0$ follows similarly. Hence, the trace $u$ has a constant sign.
\end{proof}

\begin{lemma}\label{expdecay}
Let $u\in\mathcal{X}
$ be a weak solution of \eqref{problem} with $c>\sqrt{\frac{\mu}{2m}}$.
Then for every
\begin{equation*}
    0<\beta<\left(2m\mu-\frac{\mu^2}{c^2}\right)^{\frac12},
\end{equation*}
there exists a constant $C=C(u,c,\beta)>0$ such that
\begin{equation}\label{exponentialdecay}
    |u(x)|\le C e^{-\beta|x|}
\quad \text{for all }x\in\mathbb R^N.
\end{equation}
\end{lemma}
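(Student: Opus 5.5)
The plan is to prove \eqref{exponentialdecay} through the extension problem \eqref{boundaryvalueproblem1}, by comparing the extension $U$ of $u$ with an explicit exponential supersolution on an exterior region of the half-space. The threshold on $\beta$ comes from the following computation. For $W(x,y)=e^{-\beta r-\kappa y}$ with $r=|x|$,
\begin{equation*}
    \left(-c^2\Delta_{x,y}+m^2c^4\right)W=c^2\left(m^2c^2-\beta^2-\kappa^2+\frac{\beta(N-1)}{r}\right)W .
\end{equation*}
The term $\beta(N-1)/r$ has a favorable sign. Choosing $\kappa=\sqrt{m^2c^2-\beta^2}$ therefore makes $W$ a supersolution in the interior.

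On the boundary, $-c\,\partial_yW=c\kappa W$. The boundary supersolution inequality $c\kappa\geq mc^2-\mu$ is then equivalent to $c^2(m^2c^2-\beta^2)\geq(mc^2-\mu)^2$, that is, $\beta^2\leq 2m\mu-\mu^2/c^2$. The hypothesis $c>\sqrt{\mu/2m}$ is exactly what makes this range nonempty. Since $|x|$ is not smooth at the origin, I will use $\sqrt{1+|x|^2}$ in place of $r$ (or work only on $|x|\geq R$). Taking $\beta$ strictly below the threshold leaves a margin that absorbs the resulting lower-order errors.

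The key steps, in order, are as follows.

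\textbf{Step 1: decay of $u$.} Lemmas \ref{Linfty-regularity} and \ref{C2-regularity} apply to the extension $U$ of $u$. They give $U\in L^\infty$ and $u\in L^2\cap C^{0,\alpha}(\mathbb{R}^N)$. Uniform continuity together with integrability then gives $u(x)\to0$ as $|x|\to\infty$.

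\textbf{Step 2: sign of the nonlinearity.} Choose $R$ so that $|u|\leq e^{-1}$, in particular $\log|u|<0$, on $\{|x|\geq R\}$. There the boundary condition gives
\begin{equation*}
    -c\,\partial_yU^+\leq (mc^2-\mu)U^+ ,
\end{equation*}
in the weak sense, because $u^+\log|u|\leq0$. This is where the singular logarithm actually helps.

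\textbf{Step 3: weak comparison.} Set $\Omega_R=\{|x|>R\}\times(0,\infty)$ and $Z=(U-CW)^+$. Pick $C$ so large that $CW\geq |U|_\infty\geq U$ on the lateral boundary $\{|x|=R\}\times[0,\infty)$; this is possible since $U$ is bounded and $W\geq e^{-\beta R-\kappa y}$ there. With that choice $Z$ vanishes near the lateral boundary, so it can be extended by zero and used as a test function in both weak formulations, the one for $U$ and the one for $CW$. Subtracting the two gives
\begin{equation*}
    \frac{1}{c}\int_{\Omega_R}\left(c^2|\nabla Z|^2+m^2c^4Z^2\right)\,dx\,dy\leq (mc^2-\mu)\int_{\{|x|>R\}}Z(x,0)^2\,dx .
\end{equation*}
Then apply Lemma \ref{norms} to the zero extension of $Z$, combined with the elementary estimate $\sqrt{c^2|\xi|^2+m^2c^4}\geq mc^2>mc^2-\mu$. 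This shows the left-hand side strictly dominates the right unless $Z(\cdot,0)=0$, and then the inequality forces $Z\equiv0$.

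\textbf{Step 4: conclusion.} Step 3 gives $U\leq CW$ on $\Omega_R$, hence $u\leq Ce^{-\beta|x|}$ for $|x|\geq R$. The same argument applied to $-U$ gives the matching lower bound, and boundedness of $u$ covers $|x|\leq R$.

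The main obstacle is making Step 3 rigorous on an unbounded mixed-boundary domain. One issue is integrability: $W$ does decay in $y$ because $\kappa>0$, and $U\in H^1$, so $Z\in H^1(\Omega_R)$ should hold, but this must be checked. Another issue is that the strict Lemma \ref{norms}-type coercivity must be applied to a function that is only nonnegative and only defined outside a cylinder. If the direct comparison proves delicate, I would first try a cutoff $\eta_k(x,y)$ on $Z$ and let $k\to\infty$ using the exponential decay of $W$ and the boundedness of $U$. As a fallback, the decay for the nonlocal equation could be obtained via the Bessel-type kernel of $(P_c(D)+\mu)^{-1}$, whose exponential decay rate is governed by the same threshold $\sqrt{2m\mu-\mu^2/c^2}$.
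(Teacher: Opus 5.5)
\textbf{Step 3 has a genuine gap.} The rest of your plan is the paper's argument in all essentials. You use the same barrier $e^{-\beta|x|-\kappa y}$ with $\kappa=\sqrt{m^2c^2-\beta^2}$ (the paper's $\theta$) on $\{|x|>R\}$, and the same sign $u\log|u|\le0$ where $|u|\le1$. You test with $(U-CW)^+$ extended by zero and use Lemma~\ref{norms} for the lower bound $mc^2\int Z(x,0)^2\,dx$. The paper closes with $c\theta>mc^2-\mu$ and $\theta<mc$; your closing with $c\kappa\ge mc^2-\mu$ and the gap $\mu>0$ is a harmless variant.

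The problem is the choice of $C$. No constant $C$ gives $CW\ge|U|_\infty$ on the whole lateral boundary $\{|x|=R\}\times[0,\infty)$, because there $W=e^{-\beta R-\kappa y}\to0$ as $y\to\infty$. Boundedness of $U$ says nothing about how fast $U(x,y)$ decays in $y$. So you do not get $Z=(U-CW)^+=0$ on $\{|x|=R\}$, and then $Z$ cannot be extended by zero. Without that, neither the subtraction of the two weak formulations nor Lemma~\ref{norms} is available. Your cutoff fallback does not fix this, since the obstruction is the lateral boundary data, not integrability.

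\textbf{The missing ingredient} is decay of the extension in $y$ at rate at least $\kappa$, uniformly in $x$. The paper obtains it from $\widehat U(\xi,y)=\widehat u(\xi)e^{-\sqrt{|\xi|^2+m^2c^2}\,y}$. By Cauchy--Schwarz this gives $\sup_x|U(x,y)|\le C|u|_2e^{-mcy}$ for $y\ge1$. Equivalently, $U(\cdot,y)=K_{c,y}*u$ with a nonnegative kernel of total mass $e^{-mcy}$, as in the proof of Lemma~\ref{uniformexponentialdecay}, so $|U(\cdot,y)|_\infty\le e^{-mcy}|u|_\infty$ for all $y\ge0$. Since $\kappa<mc$, this yields $A$ with $|U(x,y)|\le Ae^{-\kappa y}$ on $\{|x|=R\}$. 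With that lateral bound your Step~3 goes through.

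\textbf{Alternative fix using only boundedness of $U$.} Use the barrier $e^{-\beta(|x|-R)}\bigl(e^{-\kappa y}+\epsilon\bigr)$ instead. It is still an interior supersolution, and it is at least $\epsilon$ on the lateral boundary. Its boundary inequality $c\kappa\ge(mc^2-\mu)(1+\epsilon)$ holds for small $\epsilon>0$, because taking $\beta$ strictly below the threshold gives $c\kappa>mc^2-\mu$.
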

\begin{proof}
    Let $U$ be the unique extension of $u$ determined by \eqref{boundaryvalueproblem}.
    Taking the Fourier transform with respect to $x$, we obtain
    \begin{equation*}
        \widehat{U}(\xi,y)=\widehat{u}(\xi)e^{-\sqrt{|\xi|^2+m^2c^2}y}.
    \end{equation*}
    Hence,
    \begin{equation*}
        U(x,y)=\frac{1}{(2\pi)^{\frac{N}{2}}}\int_{\mathbb{R}^N}e^{ix\cdot\xi}e^{-\sqrt{|\xi|^2+m^2c^2}y}\widehat{u}(\xi)\,d\xi.
    \end{equation*}
    Moreover,
    \begin{equation}\label{expdecay1}
        |U(x,y)|\leq\frac{1}{(2\pi)^{\frac{N}{2}}}\left(\int_{\mathbb{R}^N}e^{-2\sqrt{|\xi|^2+m^2c^2}y}\,d\xi\right)^{\frac{1}{2}}|u|_2.
    \end{equation}
    For $y\geq1$, we have
    \begin{equation*}
        \int_{\mathbb{R}^N}e^{-2\sqrt{|\xi|^2+m^2c^2}y}\,d\xi=e^{-2mcy}\int_{\mathbb{R}^N}e^{-2(\sqrt{|\xi|^2+m^2c^2}-mc)y}\,d\xi\leq Ce^{-2mcy}.
    \end{equation*}
    Consequently,
    \begin{equation*}
        \sup_{x\in\mathbb{R}^N}|U(x,y)|\leq
         C|u|_2e^{-mcy},\quad y\geq1.
    \end{equation*}
    Thus, for every $\lambda\in(0,mc)$,
    \begin{equation*}
        \sup_{x\in\mathbb{R}^N}|U(x,y)|e^{\lambda y}\leq C|u|_2e^{-(mc-\lambda)y}\to0\quad\text{as }y\to\infty.
    \end{equation*}
    Besides, since $U\in C^{0,\alpha}(\mathbb{R}^N\times [0,\infty))\cap W^{1,q}(\mathbb{R}^N\times (0,R))$ for all $R>0$ and $q\in (2,\infty)$, it follows that
    \begin{equation}\label{yleq1}
        \sup_{0\leq y\leq R}|U(x,y)|\to0\quad\text{as }|x|\to\infty.
    \end{equation}
    Combining these estimates, we conclude that for every $\lambda\in(0,mc)$,
    \begin{equation}\label{expdecay2}
        |U(x,y)|e^{\lambda y}\to0\quad\text{as}~|x|+y\to\infty.
    \end{equation}
    Let
    \begin{equation*}
        0<\beta<\left(2m\mu-\frac{\mu^2}{c^2}\right)^{\frac{1}{2}}.
    \end{equation*} 
    Set $\theta:=\sqrt{m^2c^2-\beta^2}$. Then 
    \begin{equation}\label{expdecay3}
        0<\theta<mc,\quad   c\theta=\sqrt{m^2c^4-\beta^2c^2}> mc^2-\mu.
    \end{equation}
    By \eqref{yleq1}, $|u(x)|\to0$ as $|x|\to\infty$.
    Thus, we can take $R>0$ large enough such that
     \begin{equation}\label{0lequleq1}
        |u(x)|\leq 1\quad\text{for }|x|\geq R.
     \end{equation}
    Since $\theta\in (0,mc)$, it follows from \eqref{expdecay2} that
    \begin{equation*}
        |U(x,y)|e^{\theta y}\to0\quad\text{as }|x|+y\to\infty.
    \end{equation*}
    Therefore, we can choose $A>0$ such that
    \begin{equation}\label{expdecay4}
        |U(x,y)|\leq Ae^{-\theta y}\quad\text{for } |x|= R \text{ and }y\geq0.
    \end{equation}
     Define 
   \begin{equation*}
       W(x,y):=Ae^{-\beta(|x|-R)}e^{-\theta y}\quad\text{for }|x|\geq R\text{ and }y\geq0.
   \end{equation*}
   A direct computation gives
    \begin{equation*}
        (-c^2\Delta_{x,y}+m^2c^4)W=\frac{Ac^2\beta (N-1)}{|x|}e^{-\beta(|x|-R)}e^{-\theta y}\geq0.
    \end{equation*}
   Fix $\sigma\in\{-1,1\}$ and
   set $Z:=\sigma U-W$.
   Then 
    \begin{equation}\label{expdecay5}
        (-c^2\Delta_{x,y}+m^2c^4)Z\leq0\quad\text{for }|x|\geq R\text{ and }y>0.
    \end{equation}
    For $|x|\geq R$ with $Z(x,0)>0$, we have
    \begin{equation*}
        0<W(x,0)<\sigma u(x)=|u(x)|\leq1.
    \end{equation*}
    In particular, $\sigma u(x)\log |u(x)|\leq0$.
    Hence, by \eqref{expdecay3} and \eqref{0lequleq1}, for $|x|\geq R$ with $Z(x,0)>0$,
   \begin{equation}\label{expdecay6}
       \begin{aligned}
       -c\frac{\partial Z}{\partial y}(x,0)&=(mc^2-\mu)\sigma u+\sigma u\log |u|-c\theta W(x,0)\\
       &\leq c\theta\sigma u-c\theta W(x,0)=c\theta Z(x,0).
   \end{aligned}
   \end{equation}
   Thus, although \eqref{expdecay6} is only asserted where $Z(x,0)>0$, it yields
   \begin{equation*}
        \left(-c\frac{\partial Z}{\partial y}(x,0)\right)Z^+(x,0)\leq c\theta |Z^+(x,0)|^2\quad\text{for }|x|\geq R.
   \end{equation*}
   Set $\Omega_R:=\left\{(x,y)\in\mathbb{R}^{N+1}_+:|x|>R\right\}$. Taking $Z^+$ as a test function and using \eqref{expdecay5}--\eqref{expdecay6}, we obtain
   \begin{equation}\label{expdecay7}
   \begin{aligned}
        \int_{\Omega_R}\left(c^2|\nabla Z^+|^2+m^2c^4|Z^+|^2\right)\,dx\,dy&\leq\int_{|x|> R}c^2\left(-\frac{\partial Z}{\partial y}(x,0)\right)Z^+(x,0)\,dx\\
        &\leq c^2\theta\int_{|x|> R}|Z^+(x,0)|^2\,dx.
   \end{aligned}
   \end{equation}
   On the other hand, extending
   $Z^+$ by zero across $\{|x|=R\}$, we may apply 
   Lemma \ref{norms} to get
   \begin{equation}\label{expdecay8}
       \frac{1}{c}\int_{\Omega_R}\left(c^2|\nabla Z^+|^2+m^2c^4|Z^+|^2\right)\,dx\,dy\geq mc^2\int_{|x|> R}|Z^+(x,0)|^2\,dx.
   \end{equation}
   Combining \eqref{expdecay7} and \eqref{expdecay8} gives
   \begin{equation*}
       mc\int_{|x|> R}|Z^+(x,0)|^2\,dx\leq \theta\int_{|x|> R}|Z^+(x,0)|^2\,dx.
   \end{equation*}
   Since $\theta<mc$, we deduce that $Z^+(x,0)=0$ for $|x|>R$.
   Hence, 
   \begin{equation*}
       \sigma u(x)\leq W(x,0)= Ae^{-\beta(|x|-R)}\quad\text{for }|x|>R.
   \end{equation*}
   As $\sigma\in\{-1,1\}$ was arbitrary, we conclude that
   \begin{equation*}
       |u(x)|\leq Ae^{-\beta(|x|-R)}\quad\text{for }|x|>R.
   \end{equation*}
    Finally, since $u\in L^\infty(\mathbb{R}^N)$, taking $C:=e^{\beta R}\max\{A,|u|_\infty\}$
    gives
   \begin{equation*}
       |u(x)|\leq C e^{-\beta|x|}\quad\text{for all } x\in\mathbb{R}^N.
   \end{equation*}
   This completes the proof.
\end{proof}

We conclude this section by proving the symmetry and strict radial monotonicity of positive solutions. Our argument is based on the moving-plane method applied to the local extension problem.
\begin{lemma}\label{radiallysymmetric}
    Let $u\in\mathcal{X}$ be a positive solution of \eqref{problem}. Then $u$ is radial with respect to some point $x_0\in\mathbb{R}^N$, and strictly decreasing with respect to the radial variable.
\end{lemma}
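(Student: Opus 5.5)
We run the moving-plane method on the local extension problem \eqref{boundaryvalueproblem1}, in the horizontal directions only. Let $U>0$ be the extension of $u$. By Lemmas \ref{Linfty-regularity}, \ref{C2-regularity} and the proof of Lemma \ref{constantsign}, $U$ is a bounded classical solution with $U>0$ in $\overline{\mathbb{R}^{N+1}_+}$. By \eqref{yleq1}, $u(x)\to0$ as $|x|\to\infty$, and the Fourier representation gives decay of $U$ as $y\to\infty$.

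Fix the direction $e_1$; any other direction is handled identically. For $\lambda\in\mathbb{R}$ set $x^\lambda=(2\lambda-x_1,x')$, $\Sigma_\lambda=\{x_1<\lambda\}$, and
\[
W_\lambda(x,y)=U(x^\lambda,y)-U(x,y).
\]
Then $W_\lambda$ solves $(-c^2\Delta+m^2c^4)W_\lambda=0$ in $\Sigma_\lambda\times(0,\infty)$, vanishes on $\{x_1=\lambda\}$, and satisfies the boundary condition
\[
-c\,\partial_yW_\lambda(x,0)=(mc^2-\mu)w_\lambda+\mathcal{D}_f(u(x^\lambda),u(x))\,w_\lambda ,
\]
where $w_\lambda=W_\lambda(\cdot,0)$ and $\mathcal{D}_f$ is the divided difference \eqref{Df}.

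\textbf{Step 1 (start).} We claim $W_\lambda\ge0$ for $\lambda$ very negative. Test the equation with $W_\lambda^-$, restricted to $\Sigma_\lambda\times(0,\infty)$; this is admissible because it vanishes on the hyperplane. On the support of $w_\lambda^-$ we have $u(x^\lambda)<u(x)$. For $\lambda\ll0$ and $x\in\Sigma_\lambda$, the point $x$ lies in the region $|x|\ge R$ where $u\le e^{-1}$. If in addition $u(x^\lambda)\le e^{-1}$, Lemma \ref{log-divided-difference} (iii) gives $\mathcal{D}_f\le0$. If $u(x^\lambda)>e^{-1}$, then $u(x)>u(x^\lambda)>e^{-1}$ is impossible, so $w_\lambda^-=0$ there.

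Hence on $\operatorname{supp} w_\lambda^-$ the boundary coefficient is at most $mc^2-\mu$. Lemma \ref{norms}, applied to $W_\lambda^-$ extended by odd reflection or by zero, gives
\[
\frac1c\int\big(c^2|\nabla W_\lambda^-|^2+m^2c^4|W_\lambda^-|^2\big)\ge mc^2\int|w_\lambda^-|^2 .
\]
Combining the two yields
\[
mc^2|w_\lambda^-|_2^2\le (mc^2-\mu)|w_\lambda^-|_2^2 ,
\]
so $w_\lambda^-=0$ since $\mu>0$. Consequently $W_\lambda^-\equiv0$, exactly as in the comparison step of Lemma \ref{expdecay}.

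\textbf{Step 2 (continuation).} Let $\lambda_0=\sup\{\lambda:W_\nu\ge0\ \text{for all }\nu\le\lambda\}$. This is finite because $u\to0$ at infinity while $u>0$. Suppose $W_{\lambda_0}\not\equiv0$. The strong maximum principle in the interior and the Hopf lemma at the bottom boundary then give $W_{\lambda_0}>0$ in $\Sigma_{\lambda_0}\times[0,\infty)$. The bottom boundary condition is linear with bounded coefficient wherever $u$ is bounded below, and $w_{\lambda_0}$ cannot vanish at an interior boundary point because of the Hopf argument of Lemma \ref{constantsign} applied to the linear equation.

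For $\lambda\in(\lambda_0,\lambda_0+\varepsilon)$, split $\Sigma_\lambda$ into a large compact set $K$ and its complement. On $K$, positivity of $W_{\lambda_0}$ and continuity keep $W_\lambda>0$, except in a thin strip near the hyperplane. On $\Sigma_\lambda\setminus K$ and on the thin strip, we rerun the energy estimate of Step 1. Far out we use smallness of $u$ as in Step 1. On the strip we use that $\mathcal{D}_f$ is bounded (Lemma \ref{log-divided-difference} (i)) because $u$ is bounded above and below there. The narrow-domain smallness comes from a Poincaré-type bound in the $x_1$ direction: the strip width tends to $0$, so the $H^1$ energy dominates $C|w_\lambda^-|_{L^2(\text{strip})}^2$. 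This contradicts the maximality of $\lambda_0$. Hence $W_{\lambda_0}\equiv0$, so $u$ is symmetric about $\{x_1=\lambda_0\}$.

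\textbf{Step 3 (conclusion).} Repeating for all directions gives radial symmetry about some $x_0$. The strict inequality $W_\lambda>0$ for $\lambda<\lambda_0$, again from the Hopf argument, yields strict radial decrease.

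The main obstacle is Step 2, where the boundary nonlinearity $u\log u$ must be controlled. Its divided difference is unbounded near $u=0$, so the linear coefficient is only good, i.e.\ negative, at infinity, and bounded only on compacts. The delicate point is combining the far-field sign from Lemma \ref{log-divided-difference} (iii) with the narrow-strip estimate, and making the trace inequality of Lemma \ref{norms} work on the half-space $\Sigma_\lambda\times(0,\infty)$. For that I would use zero extension of $W_\lambda^-$, which lies in $H^1$ because it vanishes on $\{x_1=\lambda\}$.
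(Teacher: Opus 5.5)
Your proposal is correct and is essentially the paper's proof. Both run moving planes on the extension problem: start from the far-field sign of the divided-difference coefficient, apply the Hopf lemma at the critical position, and close with a narrow-region absorption near the critical plane. The differences are only technical. You start with $\mathcal{D}_f\le 0$ plus the trace bound of Lemma~\ref{norms}, where the paper uses strict negativity $C_\lambda+mc^2-\mu<0$. You absorb the strip term by a Poincar\'e bound in $x_1$, where the paper uses a small-measure H\"older--Sobolev estimate. That Poincar\'e bound must be done on the extension and combined with $\int_{\text{strip}}|w|^2\,dx\le 2\|W\|_{L^2}\|\partial_yW\|_{L^2}$, since the trace is only $H^{1/2}$.
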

\begin{proof}
    Let $U$ be the unique extension of $u$ determined by \eqref{boundaryvalueproblem}. By Lemmas \ref{Linfty-regularity} and \ref{C2-regularity}, for every $\alpha\in(0,1)$,
    \begin{equation*}
        U\in L^\infty(\mathbb{R}^{N+1}_+)\cap C^{1,\alpha}(\mathbb{R}^N\times [0,\infty))\cap C^2(\mathbb{R}^{N+1}_+),
    \end{equation*}
    and $U$ is a classical solution of \eqref{boundaryvalueproblem1}. Moreover, it follows from \eqref{expdecay2} that
    \begin{equation*}
        \lim_{|x|+y\to\infty}|U(x,y)|=0.
    \end{equation*}
    In particular, $u(x)\to0$ as $|x|\to\infty$.
    For $\lambda\in\mathbb{R}$, define
    \begin{equation*}
        M_\lambda:=\left\{(\lambda, x_2,\ldots, x_N, y): x_2,\ldots, x_N\in\mathbb{R}, y\geq0\right\}
    \end{equation*}
    and
    \begin{equation*}
        R_\lambda:=\left\{(x_1,\ldots, x_N, y)\in\mathbb{R}^{N+1}_+: x_1>\lambda\right\}.
    \end{equation*}
    Set
    \begin{equation*}
        U_\lambda(x_1, x_2,\ldots, x_N,y):=U(2\lambda-x_1, x_2,\ldots, x_N,y)
    \end{equation*}
    and $W_\lambda:=U_\lambda-U$.
    Denote their traces by $u_\lambda(x):=U_\lambda(x,0)$, $w_\lambda(x):=W_\lambda(x,0)$.
    Then $W_\lambda$ satisfies
   \begin{equation*}
       \left\{
	\begin{aligned}
		&\left(-c^2\Delta_{x,y}+m^2c^4\right)W_\lambda(x,y)=0 &&\text{in}~R_\lambda,\\
		&-c\frac{\partial W_\lambda}{\partial y}(x,0)=\left(C_\lambda(x)+mc^2-\mu\right)w_\lambda(x)&&\text{for }x_1>\lambda,
	\end{aligned}
	\right.
   \end{equation*}
   where $C_\lambda(x):=\mathcal{D}_f(u_\lambda(x), u(x))$
   and 
   $\mathcal{D}_f$ is defined by \eqref{Df}.
   Set $W_\lambda^-:=\max\{-W_\lambda,0\}$, $w_\lambda^-:=\max\{-w_\lambda,0\}$.
   On the support of $w_\lambda^-$, we have
   $ 0<u_\lambda(x)<u(x)$.
   The mean value theorem yields that
   \begin{equation*}
       C_\lambda(x)=1+\log\theta_\lambda(x)\leq1+\log u(x)
   \end{equation*}
   for some $ u_\lambda(x)<\theta_\lambda(x)<u(x)$.
   Since $u(x)\to0$ as $|x|\to\infty$, for all sufficiently large $\lambda$,
   \begin{equation*}
       C_\lambda(x)+mc^2-\mu<0
   \end{equation*}
   whenever $w_\lambda^->0$ and $x_1>\lambda$.
   Now, taking $-W_\lambda^-$ as a test function, we find
   \begin{equation*}
       \int_{R_\lambda}\left(c^2|\nabla W_\lambda^-|^2+m^2c^4|W_\lambda^-|^2\right)\,dx\,dy=c\int_{x_1>\lambda}\left( C_\lambda(x)+mc^2-\mu\right)|w_\lambda^-|^2\,dx.
   \end{equation*}
   Observe that the right-hand side is nonpositive, while the left-hand side is nonnegative. Consequently, $W_\lambda^-\equiv0$ in $R_\lambda$.
   Thus, for every sufficiently large $\lambda$, $W_\lambda\geq0$ in $R_\lambda$.
   Define
   \begin{equation*}
       \eta:=\inf\{s\in\mathbb{R}: W_\lambda\geq0\text{ in }R_\lambda\text{ for every }\lambda\geq s\}.
   \end{equation*}
   We first prove that $\eta>-\infty$.
   Suppose instead that $\eta=-\infty$.
   Then $W_\lambda\geq0$ in $R_\lambda$
   for every $\lambda\in\mathbb{R}$. Given $s<t$, take $\lambda=\frac{s+t}{2}$.
   Since the reflection of $(t,x_2,\ldots,x_N)$ across $x_1=\lambda$ is $(s,x_2,\ldots,x_N)$, we obtain
   \begin{equation*}
       u(s,x_2,\ldots,x_N)\geq  u(t,x_2,\ldots,x_N).
   \end{equation*}
   Letting $s\to-\infty$ gives
   $u(t,x_2,\ldots,x_N)\leq0$, which contradicts the positivity of $u$. Hence $\eta$ is finite.
   For every $\lambda>\eta$, the definition of $\eta$ implies $W_\lambda\geq0$ in $R_\lambda$.
   Passing to the limit as $\lambda\downarrow\eta$ and using the continuity of $U$, we obtain $W_\eta\geq0$ in $R_\eta$.
   We claim that $W_\eta\equiv0$ in $R_\eta$.
   Suppose, to the contrary, that $W_{\eta}\not\equiv0$. By the strong maximum principle, $W_\eta>0$ in $\{(x,y)\in R_\eta:y>0\}$.
   We also have
   $w_\eta(x)\geq0$ for $x_1>\eta$.
   In fact,
   \begin{equation}\label{weta>0}
       w_\eta(x)>0\quad\text{for }x_1>\eta.
   \end{equation}
   Indeed, if $w_\eta(x_0)=0$ at some point with $(x_0)_1>\eta$, then $W_\eta$ attains its minimum at $(x_0,0)$. The Hopf lemma yields
   \begin{equation*}
       \frac{\partial W_\eta}{\partial y}(x_0,0)>0,
   \end{equation*}
   whereas the boundary condition gives
   \begin{equation*}
       -c\frac{\partial W_\eta}{\partial y}(x_0,0)=\left(C_\eta(x_0)+mc^2-\mu\right)w_\eta(x_0)=0,
   \end{equation*}
   which is a contradiction. This proves \eqref{weta>0}.
  By the definition of $\eta$, there exists a sequence $\lambda_j<\eta$ with $\lambda_j\to\eta$ such that $W_{\lambda_j}^{-}\not\equiv0$. Choose $r_0>0$ sufficiently large so that
   \begin{equation*}
       mc^2-\mu+1+\log u(x)<0\quad\text{for }|x|>r_0.
   \end{equation*}
   On the support of $w_{\lambda_j}^-$, we have
   \begin{equation*}
       C_{\lambda_j}(x)\leq1+\log u(x).
   \end{equation*}
   Consequently,
   \begin{equation*}
        C_{\lambda_j}(x)+mc^2-\mu<0\quad\text{for }|x|>r_0
   \end{equation*}
    on the support of $w_{\lambda_j}^-$. Set $E_j:=\{x\in B_{r_0}:x_1>\lambda_j, w_{\lambda_j}(x)<0\}$.
    Since $u$ is bounded, the positive part of $ C_{\lambda_j}(x)+mc^2-\mu$ is uniformly bounded on the support of $w_{\lambda_j}^-$. Taking $-W_{\lambda_j}^-$ as a test function, we obtain
    \begin{equation*}
        \int_{R_{\lambda_j}}\left(c^2|\nabla W_{\lambda_j}^-|^2+m^2c^4|W_{\lambda_j}^-|^2\right)\,dx\,dy=c\int_{x_1>\lambda_j}\left( C_{\lambda_j}(x)+mc^2-\mu\right)|w_{\lambda_j}^-|^2\,dx\leq C\int_{E_j}|w_{\lambda_j}^-|^2\,dx.
    \end{equation*}
    We next show that $|E_j|\to0$.
   Fix $\delta>0$. By \eqref{weta>0}, $w_\eta$ is strictly positive on the compact set $ K_\delta:=\overline{B_{r_0}}\cap\{x_1\geq \eta+\delta\}$.
    Moreover, since $u$ is continuous and $\lambda_j\to\eta$, we have $w_{\lambda_j}\to w_\eta$
    uniformly on $K_\delta$. Therefore, for all sufficiently large $j$, $w_{\lambda_j}>0$ on $K_\delta$.
    It follows that $ E_j\subset B_{r_0}\cap\{\lambda_j<x_1<\eta+\delta\}$.
     First letting $j\to\infty$ and then $\delta\to0$, we obtain $|E_j|\to0$.
    By H\"older's inequality, the Sobolev embedding theorem and Lemma \ref{norms},
    \begin{equation*}
         \int_{E_j}|w_{\lambda_j}^-|^2\,dx\leq |E_j|^{\frac{1}{N}}\left(\int_{\mathbb{R}^N}|w_{\lambda_j}^-|^{\frac{2N}{N-1}}\right)^{\frac{N-1}{N}}\leq C|E_j|^{\frac{1}{N}}\int_{R_{\lambda_j}}\left(c^2|\nabla W_{\lambda_j}^-|^2+m^2c^4|W_{\lambda_j}^-|^2\right)\,dx\,dy.
    \end{equation*}
    It then follows that
    \begin{equation*}
        \int_{R_{\lambda_j}}\left(c^2|\nabla W_{\lambda_j}^-|^2+m^2c^4|W_{\lambda_j}^-|^2\right)\,dx\,dy\leq C|E_j|^{\frac{1}{N}}\int_{R_{\lambda_j}}\left(c^2|\nabla W_{\lambda_j}^-|^2+m^2c^4|W_{\lambda_j}^-|^2\right)\,dx\,dy.
    \end{equation*}
    Since $C|E_j|^{\frac{1}{N}}<1$ for all sufficiently large $j$,
    we have $W_{\lambda_j}^-\equiv0$ in $R_{\lambda_j}$.
    This leads to a contradiction. Hence, $W_\eta\equiv0$ in $R_\eta$.
    Moreover, $U$, and consequently $u$, is symmetric with respect to the hyperplane $x_{1}=\eta$.

   We next prove strict monotonicity. For every $\lambda>\eta$, one has $W_\lambda\geq0$ in $R_\lambda$.
   Moreover, $W_\lambda\not\equiv0$. Otherwise, $U$ would be symmetric with respect to both $M_\lambda$ and $M_\eta$. Thus,
   \begin{align*}
       U(x_1,x_2,\ldots,x_N,y)&=U(2\eta-x_1,x_2,\ldots,x_N,y)\\
       &=U(2\lambda-(2\eta-x_1),x_2,\ldots,x_N,y)\\
       &=U(x_1+2(\lambda-\eta),x_2,\ldots,x_N,y).
   \end{align*}
  Consequently, $U$ would be periodic in the $x_1$-direction, contradicting $U(x,y)\to0$ as $|x|+y\to\infty$. The strong maximum principle and the Hopf lemma therefore give
   \begin{equation*}
       W_\lambda>0\quad\text{in }\{(x,y)\in R_\lambda:y>0\}\quad\text{and}\quad  w_\lambda(x)>0\quad\text{for }x_1>\lambda. 
   \end{equation*}
   Let $\eta\leq s<t$ and choose $\lambda=\frac{s+t}{2}$.
   Since the reflection of $(t,x_2,\ldots,x_N)$ with respect to $x_{1}=\lambda$ is $(s,x_2,\ldots,x_N)$, we obtain
   \begin{equation*}
       u(s,x_2,\ldots,x_N)-u(t,x_2,\ldots,x_N)=w_\lambda(t,x_2,\ldots,x_N)>0.
   \end{equation*}
   Hence $u$ is strictly decreasing in the $x_{1}$-direction for $x_{1}>\eta$. By symmetry, it is strictly increasing for $x_1<\eta$.

   Rotational invariance allows the same argument in every direction $e\in\mathbb{S}^{N-1}$.
   Thus $u$ is symmetric about a hyperplane $H_e=\{x\in\mathbb{R}^N:x\cdot e=\eta_e\}$ and is strictly decreasing away from it.
   Since $u$ is positive, continuous and tends to zero at infinity, it attains its maximum at some $x_{0}\in\mathbb R^{N}$. Strict monotonicity on the line $x_0+\mathbb{R}e$ forces $\eta_e=x_0\cdot e$. Hence every $H_e$ passes through $x_0$, so $u$ is invariant under all reflections through $x_0$ and is therefore radial about $x_0$. The directional strict monotonicity gives strict decrease in $|x-x_0|$.
\end{proof}

\textbf{Proof of Theorem \ref{result3}.} The conclusion follows directly from Lemmas \ref{Linfty-regularity}--\ref{radiallysymmetric}.

\section{Nonrelativistic limit, uniqueness and nondegeneracy}
We first derive estimates uniform in $c$ and prove that, after suitable translations, positive action ground states converge to the Gausson $\mathfrak{g}$ in $H^1(\mathbb{R}^N)$.
Combining this convergence with the nondegeneracy of $\mathfrak{g}$, we then establish uniqueness and nondegeneracy for all sufficiently large $c$.
We begin by recalling the limiting equation
    \begin{equation}\label{limitproblem}
    -\frac{1}{2m}\Delta u+\mu u=u\log |u|\quad\text{in }\mathbb{R}^N.
    \end{equation}
Every positive $C^2$ solution of \eqref{limitproblem} that tends to zero at infinity  is, up to a translation, the Gausson
\begin{equation*}
\mathfrak{g}(x)=e^{\mu+\frac{N}{2}}e^{-\frac{m}{2}|x|^2}.
\end{equation*}
For each $c\geq1$, let $u_c$ be an arbitrary positive action ground state of \eqref{problem}, whose existence follows from Theorem \ref{result1}.
By Theorem \ref{result3}, after a translation, we may assume that $u_c$ is radial about the origin and strictly decreasing with respect to $|x|$.
Let $U_c$ be its unique extension determined by \eqref{boundaryvalueproblem}. Then Lemma \ref{groundstates} yields
\begin{equation*}
    E_c(u_c)=I_c(U_c)=m_c.
\end{equation*}

\begin{lemma}\label{energybounded}
    It holds that
    \begin{equation*}
        \sup_{c\geq1}E_c(u_c)<\infty.
    \end{equation*}
\end{lemma}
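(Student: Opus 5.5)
We prove a uniform upper bound $m_c\le C$ for all $c\geq1$ by testing the Nehari characterization with a single fixed function. From the proof of Theorem \ref{result1}, $m_c=E_c(u_c)=\inf_{U\in\mathcal{N}_c}I_c(U)$. By Lemma \ref{notempty}(i), for every $U\in\mathcal{D}_*$ we have $m_c\le\max_{t>0}I_c(tU)$. Passing to the nonlocal side through the harmonic extension (equality case of Lemma \ref{norms}), it suffices to bound $\max_{t>0}E_c(t\varphi)$ uniformly in $c$ for one fixed $\varphi\in\mathcal{X}\setminus\{0\}$. We take $\varphi=\mathfrak{g}$, the Gausson. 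It is a Schwartz function with $\mathfrak{g}^2|\log\mathfrak{g}|\in L^1$, so $\mathfrak{g}\in\mathcal{X}$.

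The key step is the elementary symbol bound
\begin{equation*}
P_c(\xi)=\sqrt{c^2|\xi|^2+m^2c^4}-mc^2=\frac{c^2|\xi|^2}{\sqrt{c^2|\xi|^2+m^2c^4}+mc^2}\le\frac{|\xi|^2}{2m},
\end{equation*}
valid for every $c>0$. Consequently
\begin{equation*}
E_c(t\mathfrak{g})\le\frac{t^2}{2}\left(\frac{1}{2m}|\nabla\mathfrak{g}|_2^2+\mu|\mathfrak{g}|_2^2\right)+\frac{t^2}{4}|\mathfrak{g}|_2^2-\frac{t^2}{2}\int_{\mathbb{R}^N}\mathfrak{g}^2\log\mathfrak{g}\,dx-\frac{t^2\log t}{2}|\mathfrak{g}|_2^2 .
\end{equation*}
The right-hand side equals $E_\infty(t\mathfrak{g})$, the action of \eqref{limitproblem}. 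It has the form $t^2(A-\tfrac12 B\log t)$ with $A,B$ independent of $c$ and $B=|\mathfrak{g}|_2^2>0$.

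Maximizing over $t>0$ gives the constant $\frac{B}{4}e^{4A/B-1}$, which does not depend on $c$. (In fact it equals $m_\infty$, since $\mathfrak{g}$ lies on the limiting Nehari manifold, but that is not needed here.) Therefore $E_c(u_c)=m_c\le\frac{B}{4}e^{4A/B-1}$ for all $c\geq1$.

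The only point requiring care is that the Nehari level equals $E_c(u_c)$ for every positive action ground state $u_c$, not just the one constructed in Theorem \ref{result1}. This was established at the end of the proof of Theorem \ref{result1}, where $m_c$ is shown to be both the ground state level and the Nehari infimum. No other obstacle arises.
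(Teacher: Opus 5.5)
Your proof is correct and is essentially the paper's argument. You fix one test function, use the Nehari fiber maximization from Lemma \ref{notempty} together with the symbol bound $P_c(\xi)\le |\xi|^2/(2m)$, and obtain a $c$-independent bound. The paper does the same computation by bounding the Nehari scaling factor $t_c$ directly for an arbitrary Schwartz $\varphi$. Your choice $\varphi=\mathfrak{g}$ has the small bonus of giving the explicit bound $m_c\le m_\infty$ for every $c\ge 1$.
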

\begin{proof}
Fix $\varphi\in\mathcal{S}(\mathbb{R}^N)\setminus\{0\}$, let $V_c$ be its unique extension, and choose $t_c>0$ so that $t_cV_c\in\mathcal{N}_c$. The equality cases in Lemma \ref{norms} give
\begin{equation*}
    \log t_c=\frac{\int_{\mathbb{R}^N}\left(P_c(\xi)+\mu\right)|\widehat{\varphi}(\xi)|^2\,d\xi-\int_{\mathbb{R}^N}\varphi^2\log|\varphi|\,dx}{\int_{\mathbb{R}^N}\varphi^2\,dx}.
\end{equation*}
Since
\begin{equation*}
    0\leq P_c(\xi)\leq\frac{|\xi|^2}{2m},
\end{equation*}
the numbers $t_c$ are bounded above uniformly in $c$. Therefore,
\begin{equation*}
    E_c(u_c)=m_c\leq I_c(t_cV_c)=\frac{t_c^2}{4}|\varphi|_2^2\leq C,
\end{equation*}
which proves the assertion.
\end{proof}

\begin{lemma}\label{1/2normbounded}
    It holds that
    \begin{equation}\label{1/2normboundedness}
        \sup_{c\geq1}\Vert u_c\Vert_{H^{1/2}(\mathbb{R}^N)}<\infty.
    \end{equation}
\end{lemma}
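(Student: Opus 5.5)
The plan is to combine the uniform bound on the action level with the Nehari identity. Then I absorb the logarithmic term using a lower bound on the symbol $P_c(\xi)$ that does not depend on $c\geq1$, together with a subcritical interpolation. Throughout, $u_c$ is the fixed positive radial action ground state and $U_c$ its extension.

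\emph{Step 1: uniform $L^2$ bound.} Since $U_c\in\mathcal{N}_c$, we have $m_c=I_c(U_c)=\frac14|u_c|_2^2$. Lemma \ref{energybounded} then gives $\sup_{c\geq1}|u_c|_2^2\leq 4\sup_{c\geq 1}m_c<\infty$.

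\emph{Step 2: a $c$-uniform lower bound for the symbol.} Write $P_c(\xi)=\sqrt{a^2+b^2}-b$ with $a=c|\xi|$ and $b=mc^2$.
\begin{itemize}
\item If $a\leq b$, then $P_c(\xi)=\frac{a^2}{\sqrt{a^2+b^2}+b}\geq \frac{|\xi|^2}{(1+\sqrt2)m}$.
\item If $a\geq b$, then $P_c(\xi)\geq(\sqrt2-1)a\geq(\sqrt2-1)|\xi|$, using $c\geq1$.
\end{itemize}
Hence $P_c(\xi)\geq\kappa\min\{|\xi|^2,|\xi|\}$ with $\kappa=\kappa(m)>0$ independent of $c\geq1$. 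Since $\min\{|\xi|^2,|\xi|\}+1\geq\frac12(1+|\xi|^2)^{1/2}$ (check $|\xi|\leq1$ and $|\xi|\geq1$ separately), we get
\begin{equation*}
\Vert u\Vert_{H^{1/2}(\mathbb{R}^N)}^2\leq C\int_{\mathbb{R}^N}\left(P_c(\xi)+1\right)|\widehat u(\xi)|^2\,d\xi,
\end{equation*}
with $C$ independent of $c\geq 1$.

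\emph{Step 3: Nehari identity and absorption.} This is the only real step, and the main point is to keep all constants free of $c$. By the equality case of Lemma \ref{norms}, $J_c(U_c)=0$ reads
\begin{equation*}
\int_{\mathbb{R}^N}\left(P_c(\xi)+\mu\right)|\widehat{u_c}(\xi)|^2\,d\xi=\int_{\mathbb{R}^N}u_c^2\log u_c\,dx\leq\int_{\{u_c\geq1\}}u_c^2\log u_c\,dx\leq C_q\int_{\mathbb{R}^N}u_c^q\,dx.
\end{equation*}
Here $q\in(2,2+\frac1N)$ is fixed; the middle inequality drops the nonpositive part of the integrand, and the last uses $\log s\leq C_q s^{q-2}$ for $s\geq1$.

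By the fractional Gagliardo--Nirenberg inequality (interpolation between $L^2$ and $L^{2N/(N-1)}$, then Lemma \ref{Sobolevembedding}),
\begin{equation*}
|u|_q^q\leq C|u|_2^{q-N(q-2)}\Vert u\Vert_{H^{1/2}(\mathbb{R}^N)}^{N(q-2)},
\end{equation*}
and $N(q-2)<1<2$ by the choice of $q$. Combining this with Step 2 and Step 1 gives
\begin{equation*}
\Vert u_c\Vert_{H^{1/2}}^2\leq C\left(|u_c|_2^2+C_q|u_c|_q^q\right)\leq C+C\Vert u_c\Vert_{H^{1/2}}^{N(q-2)}.
\end{equation*}
Because the exponent $N(q-2)$ is strictly less than $2$, this forces $\sup_{c\geq1}\Vert u_c\Vert_{H^{1/2}(\mathbb{R}^N)}<\infty$, which is \eqref{1/2normboundedness}.

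The step most in need of care is Step 2, since the usual Sobolev comparison constants for $\sqrt{-c^2\Delta+m^2c^4}-mc^2$ degenerate as $c$ varies. The elementary two-regime estimate above avoids this and gives a constant depending only on $m$.
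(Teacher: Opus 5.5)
Your proof is correct, and its skeleton matches the paper's. Both proofs take the $L^2$ bound from $m_c=\frac14|u_c|_2^2$ and Lemma \ref{energybounded}, use a $c$-uniform coercivity estimate for the symbol, and invoke the Nehari identity $\int(P_c(\xi)+\mu)|\widehat{u_c}|^2\,d\xi=\int u_c^2\log u_c\,dx$. Your two-regime bound $P_c(\xi)\geq\kappa\min\{|\xi|^2,|\xi|\}$ is equivalent, up to constants depending on $m$, to the paper's one-line estimate $P_c(\xi)\geq|\xi|^2/(|\xi|+2m)$. The paper obtains that estimate from $\sqrt{|\xi|^2/c^2+m^2}\leq|\xi|+m$, and it leads to \eqref{symbol-coercive}.

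The genuine difference is in how the logarithmic term is absorbed.
\begin{itemize}
\item The paper applies the fractional logarithmic Sobolev inequality \eqref{fls} with small $\alpha$. This gives directly $\int u_c^2\log|u_c|\,dx\leq\frac12C_{m,\mu}\Vert u_c\Vert_{H^{1/2}}^2+C(|u_c|_2)$, where the constant involves $|u_c|_2^2\log|u_c|_2^2$ and is harmless given the $L^2$ bound.
\item You discard the nonpositive part on $\{u_c<1\}$ and bound $u_c^2\log u_c\leq C_qu_c^q$ on $\{u_c\geq1\}$. You then interpolate between $L^2$ and $L^{2N/(N-1)}$, so the logarithmic term is controlled by the power $N(q-2)<1$ of the $H^{1/2}$ norm.
\end{itemize}
Your route is more elementary: it needs only H\"older interpolation and Lemma \ref{Sobolevembedding}, the same mechanism the paper uses in Lemma \ref{d12} and in the nontriviality step of Theorem \ref{result2}. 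It also applies to any nonlinearity with subcritical growth at infinity. The paper's route exploits the specific structure of the logarithm to get an $\varepsilon$-absorption with no exponent bookkeeping, at the cost of citing a sharper inequality.
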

\begin{proof}
   For every $c\geq1$, we have
    \begin{equation*}
        \sqrt{\frac{|\xi|^2}{c^2}+m^2}
        \leq |\xi|+m.
    \end{equation*}
    Hence,
    \begin{equation*}
         P_c(\xi)
        \geq
        \frac{|\xi|^2}{|\xi|+2m}.
    \end{equation*}
    Therefore, there exists a constant $C_{m,\mu}>0$, independent of $c$, such that
    \begin{equation}\label{symbol-coercive}
        P_c(\xi)+\mu
        \geq
        C_{m,\mu}(1+|\xi|^2)^\frac{1}{2}.
    \end{equation}
   Since $u_c$ solves \eqref{problem},  the Plancherel theorem gives
    \begin{equation}\label{fls1}
        \begin{aligned}
        \int_{\mathbb{R}^N}u_c^2\log |u_c|\,dx&=\int_{\mathbb{R}^N}\left(P_c(\xi)+\mu
    \right)|\widehat{u_c}(\xi)|^2\,d\xi\\
    &\geq C_{m,\mu}\int_{\mathbb{R}^N}\left(1+|\xi|^2\right)^\frac{1}{2}|\widehat{u_c}(\xi)|^2\,d\xi\\
    &= C_{m,\mu}\Vert u_c\Vert^2_{H^{1/2}(\mathbb{R}^N)}.
    \end{aligned}
    \end{equation}
    On the other hand, testing \eqref{problem} with $u_c$ gives
 \begin{equation*}
     E_c(u_c)=\frac{1}{4}\int_{\mathbb{R}^N}u_c^2\,dx.
 \end{equation*}
 Then Lemma \ref{energybounded} implies that $\{u_c\}$ is bounded in $L^2(\mathbb{R}^N)$. Let $\alpha>0$ be small enough, using the fractional logarithmic Sobolev inequality \eqref{fls}, we obtain
    \begin{equation}\label{fls2}
        \begin{aligned}
        \int_{\mathbb{R}^N}u_c^2\log |u_c|\,dx&\leq\frac{1}{2}C_{m,\mu}\Vert u_c\Vert_{H^{1/2}(\mathbb{R}^N)}^2+C_1(\log |u_c|_2^2+C_2)|u_c|_2^2\\
        &\leq\frac{1}{2}C_{m,\mu}\Vert u_c\Vert_{H^{1/2}(\mathbb{R}^N)}^2+C_3.
    \end{aligned}
    \end{equation}
    Combining \eqref{fls1} and \eqref{fls2}, we conclude
    \begin{equation*}
        \sup_{c\geq1}\Vert u_c\Vert_{H^{1/2}(\mathbb{R}^N)}<\infty.
    \end{equation*}
\end{proof}

Lemma \ref{expdecay} gives exponential decay for each fixed $c>\sqrt{\frac{\mu}{2m}}$. To make this decay uniform with respect to $c$, we shall establish a uniform $L^\infty$-bound and choose both the radius $R$ and the boundary constant $A$ in the comparison argument independently of $c$. We then fix $0<\beta<\sqrt{2m\mu}$. For all sufficiently large $c$, this choice satisfies the condition on $\beta$ in Lemma \ref{expdecay}, allowing the same comparison argument to yield uniform exponential decay.
\begin{lemma}\label{uniformexponentialdecay}
    There exist constants $c_0\geq1$, $\beta>0$ and $C>0$, independent of $c$, such that, for every $c\geq c_0$,
    \begin{equation*}
        0<u_c(x)\leq Ce^{-\beta|x|}\quad\text{for all }x\in\mathbb{R}^N.
    \end{equation*}
\end{lemma}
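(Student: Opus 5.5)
Following the remark before the statement, the plan is to rerun the comparison argument of Lemma \ref{expdecay} with every constant chosen independently of $c$. There are three ingredients: a uniform $L^\infty$ bound on $u_c$, a radius $R$ beyond which $u_c\le 1$ uniformly in $c$, and a boundary constant $A$ that is also uniform in $c$.

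\textbf{Step 1: uniform $L^\infty$ bound.} We revisit the Moser iteration of Lemma \ref{Linfty-regularity} for $U_c$. The constants there come from the trace inequality of Lemma \ref{norms} together with $g=(u^+)^{q_0-2}\chi_{\{u^+\ge1\}}$, whose $L^{2N}$ norm is controlled by $\Vert u_c\Vert_{H^{1/2}}$. That norm is uniformly bounded by Lemma \ref{1/2normbounded}. The dangerous term is $(mc^2-\mu)\int |u^+|^2u_T^{2\gamma}$, which grows with $c$. To handle it, we do not drop the $m^2c^4$ part of the left-hand side. Lemma \ref{norms} gives
\[
\frac1c\int_{\mathbb{R}^{N+1}_+}\left(c^2|\nabla V|^2+m^2c^4V^2\right)-(mc^2-\mu)\int_{\mathbb{R}^N} v^2\ \ge\ \int_{\mathbb{R}^N}(P_c(\xi)+\mu)|\widehat v|^2\,d\xi \ \ge\ C_{m,\mu}\Vert v\Vert_{H^{1/2}}^2,
\]
using \eqref{symbol-coercive}. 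We want to apply this with $V=U^+U_T^\gamma$. The factor $(\gamma+1)$ then needs care. The cleanest route is to work directly with the nonlocal equation and test with $u u_T^{2\gamma}$, using the Stroock--Varopoulos type inequality
\[
\langle (P_c(D)+\mu)u,\,u u_T^{2\gamma}\rangle \ge \frac{C}{\gamma+1}\,\langle (P_c(D)+\mu)(uu_T^\gamma),\,uu_T^\gamma\rangle .
\]
This holds because $P_c(D)+\mu$ is the generator of a Markov semigroup plus a constant: $P_c(\xi)$ is a Bernstein function of $|\xi|^2$. Combining this with \eqref{symbol-coercive} and Lemma \ref{Sobolevembedding}, the iteration constants are independent of $c$, and we obtain $\sup_{c\ge1}|u_c|_\infty\le M$.

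\textbf{Step 2: uniform radius.} The functions $u_c$ are radial and decreasing, and $|u_c|_2^2=4m_c$ is uniformly bounded by Lemma \ref{energybounded}. Hence $u_c(x)^2|B_{|x|}|\le |u_c|_2^2\le C$, which gives a radius $R$ independent of $c$ with $u_c\le 1$ (indeed $\le e^{-K}$ for any fixed $K$) for $|x|\ge R$.

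\textbf{Step 3: uniform boundary constant and comparison.} Fix $0<\beta<\sqrt{2m\mu}$, and take $c_0$ so large that $\beta<(2m\mu-\mu^2/c^2)^{1/2}$ for $c\ge c_0$. Set $\theta_c=\sqrt{m^2c^2-\beta^2}$, so that $c\theta_c>mc^2-\mu$. We need $U_c(x,y)\le Ae^{-\theta_c y}$ on $\{|x|=R\}$ with $A$ independent of $c$. This is the step I expect to be the main obstacle, since the Fourier bound in Lemma \ref{expdecay} only yields $e^{-mcy}$ decay with a $c$-dependent prefactor. As a first attempt, I would compare on the whole half-space $\mathbb{R}^{N+1}_+$: the function $Me^{-\theta_c y}$ is a supersolution of the interior equation, because $(-c^2\Delta+m^2c^4)e^{-\theta_c y}=c^2\beta^2e^{-\theta_c y}\ge0$. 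On the boundary the required inequality is
\[
-c\,\partial_y U_c=(mc^2-\mu)u_c+u_c\log u_c\ \le\ (mc^2-\mu+\log M)\,u_c ,
\]
so we would need $mc^2-\mu+\log M\le c\theta_c$. Since $c\theta_c-(mc^2-\mu)\to\mu-\beta^2/(2m)>0$, this holds for large $c$ as long as $\log M$ is small enough, which in general it is not. Two fixes are available. One is to replace $\beta$ by a smaller exponent $\beta'$ in the $y$-barrier only, chosen so that $\beta'^2/(2m)<\mu-\log M$. The other is to avoid the $y$-barrier entirely, replacing it by the bound $U_c(x,y)\le M$ combined with a modified barrier $W=Ae^{-\beta(|x|-R)}e^{-\theta y}+\text{(correction)}$. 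If $\log M\ge\mu$ this approach fails and a different argument is needed, e.g.\ first proving decay of $U_c(x,y)$ in $y$ away from the set $\{|x|\le R\}$ via a Poisson kernel estimate for $\sqrt{|\xi|^2+m^2c^2}$.

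Once $A$ is uniform, the $Z^+$ test-function argument in \eqref{expdecay7}--\eqref{expdecay8} goes through verbatim with constants independent of $c$. This yields $u_c(x)\le Ae^{\beta R}e^{-\beta|x|}$ for $|x|\ge R$, and $u_c\le M$ for $|x|\le R$, which together give the claimed bound with $C=e^{\beta R}\max\{A,M\}$.
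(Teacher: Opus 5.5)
Your Steps 1 and 2 are fine and essentially match the paper. The paper tests the nonlocal equation with $u_c^{p-1}$ and uses a Stroock--Varopoulos estimate; your truncated test function $uu_T^{2\gamma}$ with the factor $1/(\gamma+1)$ works equally well.

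\textbf{The gap is in Step 3.} You never obtain a boundary constant $A$ independent of $c$, and the two routes you actually spell out cannot work. Both the whole-half-space comparison and the variant with a smaller exponent $\beta'$ require $\log M<\mu$. But testing the equation with $u_c$ gives
\[
\mu|u_c|_2^2<\int_{\mathbb{R}^N}\left(P_c(\xi)+\mu\right)|\widehat{u_c}(\xi)|^2\,d\xi=\int_{\mathbb{R}^N}u_c^2\log u_c\,dx\le \log|u_c|_\infty\,|u_c|_2^2 ,
\]
so $\log M\ge\log|u_c|_\infty>\mu$ for every $c$. The remaining suggestions (an unspecified correction to the barrier, a Poisson kernel estimate) are not carried out. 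As written, the argument does not close.

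\textbf{What is missing.} The bound on $\{|x|=R\}$ needs no boundary flux at all. In your first attempt you demanded the Robin inequality on all of $\{y=0\}$. The comparison only uses it where $Z:=U_c-Me^{-\theta_c y}$ is positive, and $Z(x,0)=u_c(x)-M\le 0$ everywhere. So:
\begin{enumerate}
\item $Z^+$ has zero trace and lies in $H^1(\mathbb{R}^{N+1}_+)$, since $Z^+\le U_c$ and $|\nabla Z^+|\le|\nabla U_c|+\theta_c U_c$.
\item Testing $(-c^2\Delta+m^2c^4)Z=-c^2\beta^2Me^{-\theta_c y}\le 0$ with $Z^+$ produces no boundary term, so $Z^+\equiv 0$.
\end{enumerate}
Hence $U_c\le Me^{-\theta_c y}$ on the whole half-space, and $A=M$ is uniform in $c$.

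Even more directly, $U_c(\cdot,y)=K_{c,y}*u_c$ with $K_{c,y}\ge 0$ and $\int_{\mathbb{R}^N}K_{c,y}\,dx=e^{-mcy}$ (the value of the multiplier at $\xi=0$). Therefore $U_c(x,y)\le Me^{-mcy}$.

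The paper uses the same Poisson kernel slightly differently. Since $K_{c,y}$ and $u_c$ are nonnegative, radial and radially decreasing, so is $U_c(\cdot,y)$. Combined with the Plancherel bound $|U_c(\cdot,y)|_2\le e^{-mcy}|u_c|_2$, this gives $U_c(R,y)\le |B_R|^{-1/2}\sup_{c}|u_c|_2\,e^{-mcy}$.

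With either bound in hand, your final $Z^+$ comparison on $\{|x|>R\}$ goes through exactly as you describe.
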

\begin{proof}
   Recall that $u_c$ is a positive action ground state of
   \begin{equation*}
       (P_c(D)+\mu)u_c=u_c\log u_c\quad\text{in }\mathbb{R}^N,
   \end{equation*}
   which is radial about the origin and strictly decreasing with respect to $|x|$.
   We divide the proof into several steps.
   
     \medskip
    \noindent
    \textbf{Step 1.} We first establish a uniform $L^\infty$-bound.

    By Lemma \ref{1/2normbounded} and the Sobolev embedding theorem,
   \begin{equation*}
       \sup_{c\geq1}|u_c|_{\frac{2N}{N-1}}<\infty.
   \end{equation*}
   Choose $0<\delta\leq\frac{1}{N-1}$ and define $ G_c:=u_c^\delta\chi_{\{u_c\geq1\}}$.
   Since $2N\delta\leq
   \frac{2N}{N-1}$,
   it follows that
   \begin{equation*}
         \sup_{c\geq1}|G_c|_{2N}<\infty.
   \end{equation*}
   Let $p\geq2$. We use the bilinear representation of $P_c(D)$ from \cite[Formula (2.1)]{AscioneLhorinczi2024}. Together with
   \begin{equation*}
       (a-b)\left(a^{p-1}-b^{p-1}\right)\geq\frac{4(p-1)}{p^2}\left(a^\frac{p}{2}-b^\frac{p}{2}\right)^2,\quad a,b\geq0,
   \end{equation*}
   it yields the Stroock-Varopoulos estimate
   \begin{equation}\label{SVestimate}
       \left\langle P_c(D)u_c,u_c^{p-1}\right\rangle\geq a_p\left\langle P_c(D)u_c^\frac{p}{2}, u_c^\frac{p}{2}\right\rangle,\quad a_p:=\frac{4(p-1)}{p^2}.
   \end{equation}
   The test function is admissible because Lemma \ref{Linfty-regularity} and the Lipschitz continuity of $s\mapsto s^q$ on bounded intervals give $u_c^q\in H^{1/2}(\mathbb{R}^N)$ for every $q\geq1$.
   
   Testing \eqref{problem} with $u_c^{p-1}$ and using
   $a_p\leq1$, \eqref{SVestimate},
   we find
   \begin{equation*}
         a_p\left\langle \left(P_c(D)+\mu\right)u_c^{\frac{p}{2}},u_c^{\frac{p}{2}}\right\rangle
       \leq\left\langle P_c(D)u_c,u_c^{p-1}\right\rangle+\mu\left|u_c^{\frac{p}{2}}\right|_2^2=\int_{\mathbb{R}^N}u_c^p\log u_c\,dx.
   \end{equation*}
   Moreover, using the fact
   \begin{equation*}
       \log s\leq\frac{s^\delta}{\delta},\quad\forall s\geq1,
   \end{equation*}
   we obtain
   \begin{equation*}
        a_p\left\langle \left(P_c(D)+\mu\right)u_c^{\frac{p}{2}},u_c^{\frac{p}{2}}\right\rangle\leq\int_{u_c\geq1}u_c^p\log u_c\,dx\leq
        \frac{1}{\delta}\int_{u_c\geq1}u_c^{p+\delta}\,dx=\frac{1}{\delta}\int_{\mathbb{R}^N}G_cu_c^p\,dx.
   \end{equation*}
    H\"older's inequality, \eqref{symbol-coercive} and the Sobolev embedding theorem then give
   \begin{equation*}
       a_pC\left|u_c^{
       \frac{p}{2}
       }\right|_{\frac{2N}{N-1}}^2\leq\frac{1}{\delta}\int_{\mathbb{R}^N}G_cu_c^p\,dx\leq C|G_c|_{2N}\left|u_c^{
       \frac{p}{2}}\right|_2\left|u_c^{
       \frac{p}{2}
       }\right|_{\frac{2N}{N-1}}.
   \end{equation*}
   Consequently,
   \begin{equation*}
       |u_c|_{\frac{pN}{N-1}}\leq (Cp)^{\frac{2}{p}}|u_c|_p,\quad p\geq2.
   \end{equation*}
   Iterating this inequality with $ p_j=2\left(\frac{N}{N-1}\right)^j$,
   we obtain
   \begin{equation*}
       |u_c|_{p_k}\leq\left(\prod_{j=0}^{k-1}(Cp_j)^{\frac{2}{p_j}}\right)|u_c|_2.
   \end{equation*}
   Since
   \begin{equation*}
       \log \left(\prod_{j=0}^{k-1}(Cp_j)^{\frac{2}{p_j}}\right)=2\sum_{j=0}^{k-1}\frac{\log(Cp_j)}{p_j}\quad\text{and}\quad   \sum_{j=0}^{\infty}\frac{\log (Cp_j)}{p_j}<\infty,
   \end{equation*}
   we conclude that
   \begin{equation}\label{uniformLinftyestimate}
       \sup_{c\geq1}|u_c|_\infty\leq M_\infty
   \end{equation}
   for some $M_\infty>0$ independent of $c$.
    
     \medskip
    \noindent
    \textbf{Step 2.} 
    Next, we show a uniform control for the extension.
    
   Since $u_c$ is radially decreasing, for every $r>0$,
    \begin{equation*}
        u_c^2(r)\leq\frac{\int_{B_r}u_c^2(x)\,dx}{|B_r|}.
    \end{equation*}
    By Lemma \ref{1/2normbounded}, there exists a constant $M_2>0$, independent of $c$, such that
    \begin{equation*}
        \sup_{c\geq1}|u_c|_2\leq M_2.
    \end{equation*}
    As a result,
    \begin{equation}\label{uniformlyto0}
        u_c(r)\leq \frac{M_2}{|B_1|^{
        \frac{1}{2}
        }}r^{-\frac{N}{2}}.
    \end{equation}
   Hence, we can choose $R>0$, independent of $c$, such that $0<u_c(x)\leq1$ for $|x|\geq R$.
   Let $U_c$ be the unique extension of $u_c$. Its Fourier representation is
  \begin{equation*}
        U_c(x,y)=\frac{1}{(2\pi)^{\frac{N}{2}}}\int_{\mathbb{R}^N}e^{ix\cdot\xi}e^{-\sqrt{|\xi|^2+m^2c^2}y}\widehat{u_c}(\xi)\,d\xi.
    \end{equation*}
    From the identity
    \begin{equation*}
        e^{-\sqrt{|\xi|^2+m^2c^2}y}
        =
       \frac{y}{2\sqrt{\pi}}
        \int_{0}^{\infty}
         t^{-\frac{3}{2}}
         e^{-\frac{y^2}{4t}}
         e^{-m^2c^2t}
         e^{-t|\xi|^2}\,dt,
    \end{equation*}
    it follows that
    \begin{equation*}
        U_c(x,y)=\left(K_{c,y}*u_c\right)(x),
    \end{equation*}
    where 
    \begin{equation*}
        K_{c,y}(x)=
       \frac{y}{2\sqrt{\pi}}
       \int_{0}^{\infty}
         t^{-\frac{3}{2}}
         e^{-\frac{y^2}{4t}}
        e^{-m^2c^2t}
        H_t(x)\,dt,\quad  H_t(x)
        =
        \frac{1}{(4\pi t)^{\frac{N}{2}}}
         e^{-\frac{|x|^2}{4t}}.
    \end{equation*}
    Since both $K_{c,y}$ and $u_c$ are nonnegative, radial, and radially decreasing, their convolution $U_c(\cdot,y)$ has the same properties.
    By the Plancherel theorem,
    \begin{equation*}
        |U_c(\cdot,y)|_2^2=\int_{\mathbb{R}^N}e^{-2\sqrt{|\xi|^2+m^2c^2}y}|\widehat{u_c}(\xi)|^2\,d\xi\leq e^{-2mcy}|u_c|_2^2.
    \end{equation*}
    Thus,
    \begin{equation*}
        |U_c(\cdot,y)|_2\leq M_2e^{-mcy}.
    \end{equation*}
    Using the radial monotonicity, we find
    \begin{equation*}
        U_c(R,y)^2|B_R|\leq\int_{B_R}U_c(x,y)^2\,dx\leq |U_c(\cdot,y)|_2^2.
    \end{equation*}
    Therefore,
    \begin{equation}\label{UcRy}
        U_c(R,y)\leq Ae^{-mcy},
    \end{equation}
    where
    \begin{equation*}
        A:=\frac{M_2}{|B_R|^{\frac{1}{2}}}
    \end{equation*}
    is independent of $c$.

     \medskip
    \noindent
    \textbf{Step 3.} 
   Finally, we prove the uniform exponential decay.

    Fix $0<\beta<\sqrt{2m\mu}$ and choose $c_0\geq1$ large enough such that, for every $c\geq c_0$,
    \begin{equation*}
        mc^2>\mu,\quad\beta<\left(2m\mu-\frac{\mu^2}{c^2}\right)^{\frac{1}{2}}.
    \end{equation*}
    Define $\theta_c:=\sqrt{m^2c^2-\beta^2}$.
    Then $0<\theta_c<mc$ and $c\theta_c>mc^2-\mu$.
    For $|x|\geq R$ and $y\geq0$, define $ W_c(x,y):=Ae^{-\beta(|x|-R)}e^{-\theta_cy}$.
    By \eqref{UcRy}, we have
    \begin{equation*}
        U_c(R,y)\leq Ae^{-mcy}\leq Ae^{-\theta_cy}=W_c(R,y).
    \end{equation*}
    A direct computation gives
    \begin{equation*}
          (-c^2\Delta_{x,y}+m^2c^4)W_c=\frac{c^2\beta (N-1)}{|x|}W_c\geq0.
    \end{equation*}
    Set $\Omega_R:=\left\{(x,y)\in\mathbb{R}^{N+1}_+:|x|>R\right\}$
    and define $Z_c:=U_c-W_c$.
    It holds that
    \begin{equation*}
        (-c^2\Delta_{x,y}+m^2c^4)Z_c\leq0\quad\text{in }\Omega_R\quad\text{and}\quad Z_c\leq0\quad\text{on }\{|x|=R, y\geq0\}.
    \end{equation*}
    On $y=0$, using $0<u_c\leq1$ for $|x|\geq R$, we have $u_c\log u_c\leq0$.
    Therefore,
    \begin{align*}
       -c\frac{\partial Z_c}{\partial y}(x,0)&=(mc^2-\mu)u_c+u_c\log u_c-c\theta_c W_c(x,0)\\
       &\leq c\theta_c u_c-c\theta_c W_c(x,0)=c\theta_c Z_c(x,0)\quad\text{for }|x|\geq R.
    \end{align*}
    Thus, all the hypotheses of the comparison argument used in the proof of Lemma \ref{expdecay} are satisfied. Repeating the estimates in \eqref{expdecay7} and \eqref{expdecay8} gives
   \begin{equation*}
       (mc-\theta_c)\int_{|x|> R}|Z_c^+(x,0)|^2\,dx\leq 0.
   \end{equation*}
   Since $\theta_c<mc$, we deduce that $Z_c^+(x,0)=0$ for $|x|>R$.
   Consequently, 
   \begin{equation*}
       u_c(x)\leq W_c(x,0)= Ae^{-\beta(|x|-R)}\quad\text{for }|x|>R.
   \end{equation*}
    For $|x|\leq R$, the uniform $L^\infty$-estimate \eqref{uniformLinftyestimate} gives
   \begin{equation*}
       u_c(x)\leq M_\infty\leq M_\infty e^{\beta R}e^{-\beta|x|}.
   \end{equation*}
   Taking $C:=e^{\beta R}\max\{A,M_\infty\}$,
   we conclude that
   \begin{equation*}
      0<u_c(x)\le Ce^{-\beta|x|}\quad\text{for all } x\in\mathbb{R}^N\text{ and every }c\geq c_0.
   \end{equation*}
   This completes the proof.
   \end{proof}

\begin{lemma}\label{H1normboundedness}
    There exists $c_1\geq c_0$ such that
    \begin{equation}\label{H1normbounded}
         \sup_{c\geq c_1}\Vert u_c\Vert_{H^1(\mathbb{R}^N)}<\infty.
    \end{equation}
\end{lemma}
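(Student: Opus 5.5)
To prove Lemma \ref{H1normboundedness}, I would test the equation \eqref{problem} for $u_c$ against $u_c$ itself. Plancherel then gives
\begin{equation*}
\int_{\mathbb{R}^N}\left(P_c(\xi)+\mu\right)|\widehat{u_c}(\xi)|^2\,d\xi=\int_{\mathbb{R}^N}u_c^2\log u_c\,dx .
\end{equation*}
The right-hand side is bounded uniformly in $c$. The reason is that $u_c\le M_\infty$ by \eqref{uniformLinftyestimate}, while Lemma \ref{uniformexponentialdecay} gives $u_c\le Ce^{-\beta|x|}$. Hence $|u_c^2\log u_c|\le C(1+|x|)e^{-2\beta|x|}+C u_c^2$, which is integrable uniformly for $c\ge c_0$. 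This estimate alone yields only $H^{1/2}$ control, because $P_c(\xi)\sim c|\xi|$ for $|\xi|\gg mc$. The real task is therefore to control the high frequencies $|\xi|\gtrsim c$ at the level $|\xi|^2$.

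My plan is to bootstrap regularity uniformly through the equation itself. Write
\begin{equation*}
\widehat{u_c}(\xi)=\frac{\widehat{h_c}(\xi)}{P_c(\xi)+\mu},\qquad h_c:=u_c\log u_c .
\end{equation*}
The symbol satisfies the elementary bounds $P_c(\xi)\ge \frac{|\xi|^2}{4m}$ for $|\xi|\le mc$ (a direct computation) and $P_c(\xi)\ge \frac{c|\xi|}{2}$ for $|\xi|\ge mc$. On the low-frequency region,
\begin{equation*}
|\xi|^2|\widehat{u_c}|^2\le C|\widehat{h_c}|^2,
\end{equation*}
so $\int_{|\xi|\le mc}|\xi|^2|\widehat{u_c}|^2\,d\xi\le C|h_c|_2^2$. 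The norm $|h_c|_2$ is uniformly bounded by the $L^\infty$ bound together with exponential decay, since $|s\log s|^2\le C(s^{2-\epsilon}+s^{2+\epsilon})$ and $e^{-(2-\epsilon)\beta|x|}$ is integrable. On the high-frequency region,
\begin{equation*}
|\xi|^2|\widehat{u_c}|^2\le \frac{4}{c^2}|\widehat{h_c}|^2\le 4|\widehat{h_c}|^2 ,
\end{equation*}
using $c\ge1$. So this region is also controlled by $|h_c|_2^2$.

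Combining the two regions gives $\int|\xi|^2|\widehat{u_c}|^2\,d\xi\le C\sup_{c\ge c_1}|h_c|_2^2$. Adding the uniform $L^2$ bound from Lemma \ref{1/2normbounded} then yields \eqref{H1normbounded} with $c_1=c_0$.

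The main obstacle is the uniform $L^2$ bound on $u_c\log u_c$. The singularity of $\log$ near $u_c=0$ is what rules out a plain $L^q$ argument, and this is exactly where the uniform exponential decay of Lemma \ref{uniformexponentialdecay} is needed. More precisely, I would use the crude bound $|u_c\log u_c|\le C_\epsilon u_c^{1-\epsilon}$ on $\{u_c\le1\}$. This bound is integrable in square, uniformly in $c$, because $u_c^{2-2\epsilon}\le Ce^{-(2-2\epsilon)\beta|x|}$. On $\{u_c\ge1\}$, the quantity $|u_c\log u_c|\le M_\infty\log M_\infty$ is bounded, and this set lies in the fixed ball $B_R$ by \eqref{uniformlyto0}. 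If the symbol lower bounds turn out to have the wrong constants, I would simply replace them by $P_c(\xi)+\mu\ge C\min\{|\xi|^2,c|\xi|\}+\mu$. The argument does not depend on the precise constants.
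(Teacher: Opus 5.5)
Your proof is correct, but it follows a different route from the paper's. The paper rewrites the equation as $\sqrt{-c^2\Delta+m^2c^4}\,u_c=(mc^2-\mu)u_c+u_c\log u_c$, squares both sides in $L^2$, and divides by $c^2$ to obtain the exact identity \eqref{H1normbounded2}. Bounding its right-hand side requires uniform control of both $\int u_c^2(\log u_c)^2$ and $\int u_c^2\log u_c$. The paper also takes $c\geq\sqrt{\mu/m}$ so that the factor $m-\mu/c^2$ is nonnegative.

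You instead invert $P_c(D)+\mu$ and use a multiplier bound on $|\xi|/(P_c(\xi)+\mu)$ that is uniform in $c$. The only input is then the uniform bound on $|u_c\log u_c|_2$, which you obtain exactly as the paper does in \eqref{H1normbounded1}, from the uniform $L^\infty$ bound and the uniform exponential decay.

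Your frequency splitting is not actually needed. The bound \eqref{symbol-coercive} already gives $(1+|\xi|^2)^{1/2}\leq C_{m,\mu}^{-1}(P_c(\xi)+\mu)$ for all $c\geq1$. Hence $\Vert u_c\Vert_{H^1(\mathbb{R}^N)}\leq C_{m,\mu}^{-1}|u_c\log u_c|_2$ in one line, with $c_1=c_0$. This also removes your constant slip: $P_c(\xi)\geq c|\xi|/2$ fails for $mc\leq|\xi|<\frac43 mc$, and a valid constant there is $\sqrt2-1$. You rightly flagged this as harmless.

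One step you should state explicitly: once $u_c\log u_c\in L^2$, the weak formulation extends by density to Schwartz test functions. This gives $(P_c(\xi)+\mu)\widehat{u_c}=\widehat{u_c\log u_c}$ a.e. The paper uses the same fact implicitly.

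What the paper's route buys is an exact identity with the sharp constants. Divided by $2m$, it reads $\frac{1}{2m}|\nabla u_c|_2^2+\mu|u_c|_2^2=\int u_c^2\log u_c\,dx+O(c^{-2})$. This is reused as \eqref{utog3} in the proof of Theorem \ref{result2} to upgrade weak to strong $H^1$ convergence toward the Gausson. Your multiplier estimate yields boundedness only, so with your route that asymptotic identity would still have to be derived separately at that later stage.
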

\begin{proof}
Choose $0<\delta<\min\left\{1,\frac{2}{N-1}\right\}$,
so that $2+\delta\leq\frac{2N}{N-1}$.
By Lemma \ref{1/2normbounded} and the Sobolev embedding theorem, we have
\begin{equation*}
    \sup_{c\geq1}|u_c|_p<\infty,\quad 2\leq p\leq\frac{2N}{N-1}.
\end{equation*}
On the other hand, by Lemma \ref{uniformexponentialdecay},
\begin{equation*}
    \sup_{c\geq c_0}\int_{\mathbb{R}^N}|u_c|^{2-\delta}\,dx\leq C^{2-\delta}\int_{\mathbb{R}^N}e^{-\beta(2-\delta)|x|}\,dx<\infty.
\end{equation*}
Consequently,
\begin{equation}\label{H1normbounded1}
     \begin{aligned}
     \int_{\mathbb{R}^N}u_c^2\left(\log |u_c|\right)^2\,dx&=\int_{u_c\leq1}u_c^2\left(\log |u_c|\right)^2\,dx+\int_{u_c>1}u_c^2\left(\log |u_c|\right)^2\,dx\\
     &\leq C_\delta\left(\int_{\mathbb{R}^N}|u_c|^{2-\delta}\,dx+\int_{\mathbb{R}^N}|u_c|^{2+\delta}\,dx\right)\leq M,\quad c\geq c_0,
 \end{aligned}
 \end{equation}
 for some constant $M>0$ independent of $c$,
 which implies that $u_c\log|u_c|\in L^2(\mathbb{R}^N)$.
    Since $u_c$ solves \eqref{problem}, we have
    \begin{equation}\label{uc'sequation}
        \sqrt{-c^2\Delta+m^2c^4}u_c=(mc^2-\mu)u_c+u_c\log |u_c|.
    \end{equation}
    The right-hand side belongs to $L^2(\mathbb{R}^N)$. Hence, by the Plancherel theorem,
    \begin{equation*}
        \int_{\mathbb{R}^N}\left(c^2|\xi|^2+m^2c^4\right)|\widehat{u_c}(\xi)|^2\,d\xi<\infty.
    \end{equation*}
    Therefore, $u_c\in H^1(\mathbb{R}^N)$ for every $c\geq c_0$.

    It remains to establish the uniform boundedness in $H^1(\mathbb{R}^N)$.
    Taking the $L^2$-norm on both sides of the equation above, we obtain
    \begin{equation*}
        c^2|\nabla u_c|_2^2+m^2c^4|u_c|_2^2=(mc^2-\mu)^2|u_c|_2^2+2(mc^2-\mu)\int_{\mathbb{R}^N}u_c^2\log|u_c|\,dx+\int_{\mathbb{R}^N}u_c^2\left(\log |u_c|\right)^2\,dx.
    \end{equation*}
    Take $c_1\geq\max\left\{c_0,\sqrt{\frac{\mu}{m}}\right\}$.
    Then there exists $C>0$, independent of $c\geq c_1$, such that
    \begin{equation}\label{H1normbounded2}
        \begin{aligned}
         |\nabla u_c|_2^2+2m\mu|u_c|_2^2&=\frac{\mu^2}{c^2}|u_c|_2^2+2\left(m-\frac{\mu}{c^2}\right)\int_{\mathbb{R}^N}u_c^2\log|u_c|\,dx+\frac{1}{c^2}\int_{\mathbb{R}^N}u_c^2\left(\log |u_c|\right)^2\,dx\\
         &\leq \frac{\mu^2}{c^2}|u_c|_2^2+2C_\delta\left(m-\frac{\mu}{c^2}\right)\int_{\mathbb{R}^N}|u_c|^{2+\delta}\,dx+\frac{M}{c^2}\leq C.
    \end{aligned}
    \end{equation}
     As a result,
     \begin{equation*}
         \sup_{c\geq c_1}\Vert u_c\Vert_{H^1(\mathbb{R}^N)}<\infty.
     \end{equation*}
\end{proof}

\begin{lemma}\label{weaksolution}
    Let $\{u_c\}_{c\geq c_1}$ be the family of positive action ground states fixed above. Then up to a subsequence, there exists $v\in H^1(\mathbb{R}^N)$ such that $u_c\rightharpoonup v$ in $H^1(\mathbb{R}^N)$. Moreover, $v\in\mathcal{X}$ and $v$ is a weak solution of \eqref{limitproblem}.
\end{lemma}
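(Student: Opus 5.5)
By Lemma \ref{H1normboundedness}, $\{u_c\}_{c\geq c_1}$ is bounded in $H^1(\mathbb{R}^N)$. Hence, up to a subsequence, $u_c\rightharpoonup v$ in $H^1(\mathbb{R}^N)$ and $u_c\to v$ in $L^q_{loc}(\mathbb{R}^N)$ for $q\in[1,2^*)$, and $u_c\to v$ a.e. Since each $u_c$ is radial and nonincreasing, $v\geq0$ is radial and nonincreasing.

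\emph{Step 1: $v\in\mathcal{X}$.} By Lemma \ref{uniformexponentialdecay}, $0<u_c\leq Ce^{-\beta|x|}$ for $c\geq c_0$. The a.e. limit therefore satisfies $v\leq Ce^{-\beta|x|}$, and $v\leq M_\infty$ by \eqref{uniformLinftyestimate}. Fix $\delta\in(0,1)$. Then $v^2|\log v|\leq C_\delta(v^{2-\delta}+v^{2+\delta})$, and both terms are integrable: the first by the exponential decay, the second by boundedness together with $v\in L^2$. Thus $v\in\mathcal{X}$. Alternatively, Fatou's lemma combined with the uniform bound \eqref{H1normbounded1} gives the same conclusion.

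\emph{Step 2: passing to the limit in the weak formulation.} Fix $\varphi\in C_c^\infty(\mathbb{R}^N)$. The weak formulation of \eqref{problem} reads
\begin{equation*}
\int_{\mathbb{R}^N}\left(P_c(\xi)+\mu\right)\widehat{u_c}\,\overline{\widehat\varphi}\,d\xi=\int_{\mathbb{R}^N}u_c\varphi\log u_c\,dx.
\end{equation*}
The right-hand side is handled on the compact set $K=\operatorname{supp}\varphi$. There $|u_c\log u_c|\leq C$ uniformly, because $s\log s$ is bounded on $[0,M_\infty]$, and $u_c\log u_c\to v\log v$ a.e., since $f$ is continuous with $f(0)=0$. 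Dominated convergence then gives convergence to $\int v\varphi\log v$.

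The left-hand side is the main obstacle, since both the symbol and the function depend on $c$. Split it as
\begin{equation*}
\int\Big(P_c(\xi)-\frac{|\xi|^2}{2m}\Big)\widehat{u_c}\,\overline{\widehat\varphi}\,d\xi+\int\Big(\frac{|\xi|^2}{2m}+\mu\Big)\widehat{u_c}\,\overline{\widehat\varphi}\,d\xi .
\end{equation*}
The second integral equals $\frac{1}{2m}\int\nabla u_c\cdot\nabla\varphi+\mu\int u_c\varphi$, which tends to the corresponding expression for $v$ by weak $H^1$ convergence. For the first integral, use $0\leq\frac{|\xi|^2}{2m}-P_c(\xi)\leq\frac{|\xi|^4}{8m^3c^2}$. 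This elementary bound follows from $\sqrt{1+t}\geq1+\frac t2-\frac{t^2}{8}$ for $t\geq0$. With Cauchy--Schwarz, the first integral is bounded by
\begin{equation*}
\frac{C}{c^2}\,|u_c|_2\,\big\||\xi|^4\widehat\varphi\big\|_{L^2}\to0,
\end{equation*}
because $\varphi$ is Schwartz and $|u_c|_2$ is uniformly bounded. Hence $v$ satisfies
\begin{equation*}
\frac{1}{2m}\int\nabla v\cdot\nabla\varphi+\mu\int v\varphi=\int v\varphi\log v
\end{equation*}
for every $\varphi\in C_c^\infty(\mathbb{R}^N)$, that is, $v$ is a weak solution of \eqref{limitproblem}.

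This completes the plan. Nontriviality of $v$ is not needed for the present statement; I would establish it afterwards from the lower bound $m_c\geq\delta>0$ via local compactness, using the uniform exponential tail from Lemma \ref{uniformexponentialdecay}.
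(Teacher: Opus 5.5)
Your proposal is correct and follows essentially the same route as the paper. Both arguments extract a weak $H^1$ limit via Lemma \ref{H1normboundedness}, pass to the limit term by term in the weak formulation against $\varphi\in C_c^\infty(\mathbb{R}^N)$, and get $v\in\mathcal{X}$ from the uniform decay and integrability bounds. The differences are purely technical: you use the explicit bound $0\leq\frac{|\xi|^2}{2m}-P_c(\xi)\leq\frac{|\xi|^4}{8m^3c^2}$ where the paper uses dominated convergence to show $P_c(D)\varphi\to-\frac{1}{2m}\Delta\varphi$ in $L^2$, and you treat the nonlinear term with the uniform $L^\infty$ bound \eqref{uniformLinftyestimate} plus dominated convergence where the paper uses continuity of the Nemytskii operator on $L^q_{loc}$.
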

\begin{proof}
    By Lemma \ref{H1normboundedness}, the family $\{u_c\}_{c\geq c_1}$ is bounded in $H^1(\mathbb{R}^N)$. Hence, after passing to a subsequence, there exists $v\in H^1(\mathbb{R}^N)$ such that
    \[
      	\begin{cases}
      		u_c\rightharpoonup v\text{ in }H^1(\mathbb{R}^N),\\
      		u_c\to v\text{ in }L_{loc}^q(\mathbb{R}^N),\quad 1\leq q<2^*,\\
      		u_c\to v\quad \text{a.e. in }\mathbb{R}^N.
      	\end{cases}
      	\]
        Since $u_c$ is a weak solution of \eqref{problem}, for every $\varphi\in C_c^\infty(\mathbb{R}^N),$
     \begin{equation}\label{veryweaksolution0}
         \int_{\mathbb{R}^N}u_c\left(P_c(D)+\mu\right)\varphi \,dx=\int_{\mathbb{R}^N}u_c\varphi\log |u_c|\,dx.
     \end{equation}
     Let $K$ be the support of $\varphi$. Noting that for every $q\in (2,2^*)$, there exists $C_q>0$ such that
     \begin{equation*}
         |s\log |s||\leq C_q(1+|s|^{q-1}),\quad s\in\mathbb{R}.
     \end{equation*}
    It follows from \cite[Theorem A.4]{Willem1996} that
     \begin{equation*}
         u_c\log |u_c|\to v\log |v|\quad\text{in }L^{q'}(K),
     \end{equation*}
     where $q'=\frac{q}{q-1}$. Thus, as $c\to\infty$,
     \begin{equation}\label{veryweaksolution1}
         \left|\int_{\mathbb{R}^N}\left(u_c\log |u_c|-v\log |v|\right)\varphi\,dx\right|\leq\left| u_c\log |u_c|-v\log |v|\right|_{L^{q'}(K)}|\varphi|_{L^q(K)}\to0.
     \end{equation}
     Besides, since $u_c\rightharpoonup v$ in $L^2(\mathbb{R}^N)$,
     we also have
\begin{equation}\label{veryweaksolution2}
         \int_{\mathbb{R}^N}(u_c-v)\varphi\,dx\to0.
     \end{equation}
     Since
    \begin{equation*}
        P_c(\xi)=\frac{|\xi|^2}{\sqrt{\frac{|\xi|^2}{c^2}+m^2}+m}\to\frac{|\xi|^2}{2m}\quad\text{and}\quad 0\leq P_c(\xi)\leq\frac{|\xi|^2}{2m},
    \end{equation*}
    Plancherel's theorem and dominated convergence show that
     \begin{equation}\label{veryweaksolution3}
         P_c(D)\varphi\to-\frac{1}{2m}\Delta \varphi\quad\text{in }L^2(\mathbb{R}^N).
     \end{equation}
   Together with the $L^2$-boundedness and weak convergence of $u_c$, this gives
    \begin{equation}\label{veryweaksolution4}
           \int_{\mathbb{R}^N}u_cP_c(D)\varphi\,dx\to-\frac{1}{2m}\int_{\mathbb{R}^N}v\Delta\varphi\,dx.
    \end{equation}
    Combining this with \eqref{veryweaksolution0}--\eqref{veryweaksolution2} yields
    \begin{equation*}
        \int_{\mathbb{R}^N}\left(-\frac{1}{2m}v\Delta\varphi+\mu v\varphi\right)\,dx=\int_{\mathbb{R}^N}v\varphi\log|v|\,dx.
    \end{equation*}
    Moreover, since $v\in H^1(\mathbb{R}^N)$, integration by parts in the weak sense yields
    \begin{equation*}
         \int_{\mathbb{R}^N}\left(\frac{1}{2m}\nabla v\cdot\nabla\varphi+\mu v\varphi\right)\,dx=\int_{\mathbb{R}^N}v\varphi\log|v|\,dx,\quad\forall\varphi\in C_c^\infty(\mathbb{R}^N).
    \end{equation*}
    It remains to show that $v\in\mathcal{X}$.
    Choose the same $\delta>0$ as in the proof of Lemma \ref{H1normboundedness}. The uniform boundedness of $u_c$ in $L^{2-\delta}(\mathbb{R}^N)$ and $L^{2+\delta}(\mathbb{R}^N)$ yields
    \begin{equation*}
        \sup_{c\geq c_1}\int_{\mathbb{R}^N}u_c^2|\log|u_c||\,dx<\infty.
    \end{equation*}
    Since $u_c\to v$ a.e. in $\mathbb{R}^N$,
    Fatou's lemma gives
    \begin{equation*}
        \int_{\mathbb{R}^N}v^2|\log|v||\,dx\leq \liminf_{c\to\infty}\int_{\mathbb{R}^N}u_c^2|\log|u_c||\,dx<\infty.
    \end{equation*}
    Thus, $v\in\mathcal{X}$, and
    therefore $v$ is a weak solution of \eqref{limitproblem}.
\end{proof}

We now proceed to the proof of Theorem \ref{result2}.
\begin{proof}
    Let $\{u_c\}_{c\geq c_1}$ be the family of positive action ground states fixed at the beginning of this section, with each $u_c$ radial about the origin and strictly decreasing as a function of $|x|$.
     By Lemma
    \ref{weaksolution}, up to a subsequence, there exists
    $v\in H^1(\mathbb{R}^N)$ such that $u_c\rightharpoonup v$ in $H^1(\mathbb{R}^N)$, 
    and $v\in\mathcal{X}$ is a weak solution of \eqref{limitproblem}.

    We first show that $v$ is nontrivial. Choose the same $\delta>0$ as in the proof of Lemma \ref{H1normboundedness}. It follows from  Lemma \ref{Sobolevembedding} and \eqref{fls1} that
    \begin{equation*}
         |u_c|_{2+\delta}^2\leq C_S\Vert u_c\Vert_{H^{1/2}(\mathbb{R}^N)}^2
        \leq
        \frac{C_S}{C_{m,\mu}}\int_{\mathbb{R}^N}u_c^2\log |u_c|\,dx
        \leq
        C|u_c|_{2+\delta}^{2+\delta}.
    \end{equation*}
    Thus, there exists $\rho>0$, independent of $c$, such that $|u_c|_{2+\delta}\geq\rho>0$.
    Since $u_c$ is radial and bounded in $H^{1/2}(\mathbb{R}^N)$, the compact embedding in Lemma \ref{Sobolevembedding} yields, after passing to a further subsequence, $u_c\to v$ in
    $L^{2+\delta}(\mathbb{R}^N)$.
    Consequently,
    \begin{equation*}
        |v|_{2+\delta}=\lim_{c\to\infty}|u_c|_{2+\delta}\geq\rho,
    \end{equation*}
    and hence $v\not\equiv0$.
    
    Since $u_c>0$ and $u_c\to v$ a.e. in $\mathbb{R}^N$, we have $v\geq0$. Besides, $v$ is radial about the origin. 
    By Lemma \ref{uniformexponentialdecay} and the almost everywhere convergence, we have
    \begin{equation*}
        0\leq v(x)\leq Ce^{-\beta|x|}\quad\text{for a.e. }x\in\mathbb{R}^N. 
    \end{equation*}
    In particular, $v\in L^\infty(\mathbb{R}^N)$,
    and standard local elliptic regularity yields $v\in C^2(\mathbb{R}^N)$.
    The positivity argument in \cite{dAvenia2014} also implies that $v$ is strictly positive. 
    By continuity, the preceding bound holds everywhere, and hence $v(x)\to0$ as $|x|\to\infty$.
     By the classification of positive $C^2$ solutions that vanish at infinity, 
    $v$ is a translate of the Gausson $\mathfrak{g}$.
    Since $v$ is radial about the origin, we deduce that $v=\mathfrak{g}$.
     It remains to prove the strong convergence in $H^1(\mathbb{R}^N).$ Since $\mathfrak{g}$ solves \eqref{limitproblem}, one has
    \begin{equation}\label{utog1}
        \int_{\mathbb{R}^N}\left(\frac{1}{2m}|\nabla\mathfrak{g}|^2+\mu\mathfrak{g}^2\right)\,dx=\int_{\mathbb{R}^N}\mathfrak{g}^2\log \mathfrak{g}\,dx.
    \end{equation}
    Besides, the weak convergence $u_c\rightharpoonup\mathfrak{g}$ in $H^1(\mathbb{R}^N)$
    yields that, as $c\to\infty$,
    \begin{equation}\label{utog2}
        \int_{\mathbb{R}^N}\left(\frac{1}{2m}\nabla u_c\cdot\nabla\mathfrak{g}+\mu u_c\mathfrak{g}\right)\,dx\to\int_{\mathbb{R}^N}\left(\frac{1}{2m}|\nabla\mathfrak{g}|^2+\mu\mathfrak{g}^2\right)\,dx=\int_{\mathbb{R}^N}\mathfrak{g}^2\log \mathfrak{g}\,dx.
    \end{equation}
    On the other hand, using \eqref{H1normbounded2}, together with the uniform boundedness of $u_c$ in
    $L^{2-\delta}(\mathbb{R}^N)$, $L^2(\mathbb{R}^N)$ and $L^{2+\delta}(\mathbb{R}^N)$, we obtain
    \begin{equation}\label{utog3}
        \limsup_{c\to\infty}\left(\frac{1}{2m}|\nabla u_c|_2^2+\mu|u_c|_2^2\right)\leq\limsup_{c\to\infty}\int_{\mathbb{R}^N}u_c^2\log |u_c|\,dx.
    \end{equation}
    Combining \eqref{utog1}--\eqref{utog3}, we conclude that
    \begin{align*}
        &\limsup_{c\to\infty}\left(\frac{1}{2m}|\nabla(u_c-\mathfrak{g})|_2^2+\mu|u_c-\mathfrak{g}|_2^2\right)\leq\limsup_{c\to\infty}\int_{\mathbb{R}^N}u_c^2\log |u_c|\,dx-\int_{\mathbb{R}^N}\mathfrak{g}^2\log \mathfrak{g}\,dx.
    \end{align*}
     In addition, it holds that
    \begin{equation*}
        \sup_{c\geq c_1}\int_{\mathbb{R}^N}u_c^2|\log |u_c||\,dx<\infty.
    \end{equation*}
    Since Lemma \ref{Sobolevembedding} implies that $u_c\to\mathfrak{g}$ in $L^{2+\delta}(\mathbb{R}^N)$,
     Lemma \ref{logBrezisLieb} applies and gives
    \begin{align*}
        \limsup_{c\to\infty}\left(\frac{1}{2m}|\nabla(u_c-\mathfrak{g})|_2^2+\mu|u_c-\mathfrak{g}|_2^2\right)&
        \leq\limsup_{c\to\infty}\int_{\mathbb{R}^N}|u_c-\mathfrak{g}|^2\log |u_c-\mathfrak{g}|\,dx\\
        &\leq C_\delta\limsup_{c\to\infty}\int_{\mathbb{R}^N}|u_c-\mathfrak{g}|^{2+\delta}\,dx\\
        &=0
    \end{align*}
    Therefore, we conclude that $u_c\to\mathfrak{g}$ in $H^1(\mathbb{R}^N)$ as $c\to\infty$.
    Finally,
    \begin{equation*}
        m_c=\frac{1}{4}|u_c|_2^2\to\frac{1}{4}|\mathfrak{g}|_2^2=m_\infty,
    \end{equation*}
    which also gives the stated formula for $m_\infty$.
    This completes the proof.
\end{proof}

Next, we give the proof of Theorem \ref{result4}.
\begin{proof}
    Suppose by contradiction that there exist a sequence $c_n\to\infty$ and two sequences of nontrivial solutions $u_n,\widetilde{u}_n\in\mathcal{X}$ of \eqref{problem}, such that $E_{c_n}(u_n)<2m_{c_n}$, $E_{c_n}(\widetilde{u}_n)<2m_{c_n}$,
     but $u_n$ and $\widetilde{u}_n$ do not coincide up to translations and a change of sign.
     By Lemmas \ref{constantsign} and \ref{radiallysymmetric}, after translations and changes of sign, we may assume that $u_n$ and $\widetilde{u}_n$ are positive, radial about the origin, and strictly decreasing with respect to the radial variable. In particular, $u_n\not\equiv \widetilde{u}_n$ for every $n$.
     Let $w$ be any nontrivial solution of \eqref{problem} with $E_c(w)<2m_c$. Testing the equation with $w$, we obtain 
     \begin{equation*}
           |w|_2^2=4E_c(w)<8m_c.
     \end{equation*}
     Moreover, Lemma \ref{energybounded} gives a constant $C>0$, independent of $c$, such that
     $|w|_2^2\leq C$.
     The proofs of Lemmas \ref{1/2normbounded}--\ref{weaksolution} use only the equation, this uniform $L^2$-bound, the positivity and radial monotonicity of the solutions, and do not otherwise rely on the ground state property. Their arguments therefore apply to both sequences and yield
     \begin{equation*}
         \sup_{n}\left(\Vert u_n\Vert_{H^1(\mathbb{R}^N)}+\Vert \widetilde{u}_n\Vert_{H^1(\mathbb{R}^N)}\right)<\infty,
     \end{equation*}
     and
     \begin{equation*}
         0<u_n(x),\widetilde{u}_n(x)\leq Ce^{-\beta|x|}\quad\text{for all }x\in\mathbb{R}^N,
     \end{equation*}
     for some constants $C,\beta>0$ independent of $n$.

     We now repeat the arguments used in the proof of Theorem \ref{result2}. After passing to a common subsequence, both sequences converge weakly in $H^1(\mathbb R^N)$ to nontrivial nonnegative solutions of the limiting problem \eqref{limitproblem}. 
     The regularity and positivity arguments in that proof show that both limits are translates of the Gausson $\mathfrak{g}$.
     Since the symmetry centers have been fixed at the origin, both limits must be $\mathfrak{g}$. The $H^1$-convergence argument in the proof of Theorem \ref{result2} then gives
    \begin{equation*}
    u_n\to\mathfrak{g},\quad\widetilde{u}_n\to\mathfrak{g}\quad\text{in }H^1(\mathbb{R}^N).
    \end{equation*}
    After passing to a further common subsequence, we may also assume that
    \begin{equation*}
        u_n(x)\to\mathfrak{g}(x),\quad\widetilde{u}_n(x)\to\mathfrak{g}(x)\text{ a.e. }x\in\mathbb{R}^N.
    \end{equation*}
    We next establish a local estimate that will be used below. Let $K\subset\mathbb{R}^N$ be compact and choose $R_K>0$ such that $K\subset B_{R_K}$. Set $  A_K:=B_{R_K+1}\setminus\overline{B_{R_K}}$.
    The strong $L^2$-convergence gives
    \begin{equation*}
        \int_{A_K}u_n^2\,dx\to \int_{A_K}\mathfrak{g}^2\,dx>0,\quad   \int_{A_K}\widetilde{u}_n^2\,dx\to \int_{A_K}\mathfrak{g}^2\,dx>0.
    \end{equation*}
    Since $u_n$ and $\widetilde{u}_n$ are radially decreasing, for all sufficiently large $n$,
    \begin{equation*}
        u_n(R_K)^2\geq\frac{1}{|A_K|}\int_{A_K}u_n^2\,dx\geq\frac{1}{2|A_K|}\int_{A_K}\mathfrak{g}^2\,dx
    \end{equation*}
    and, similarly,
    \begin{equation*}
        \widetilde{u}_n(R_K)^2\geq\frac{1}{|A_K|}\int_{A_K}\widetilde{u}_n^2\,dx\geq\frac{1}{2|A_K|}\int_{A_K}\mathfrak{g}^2\,dx.
    \end{equation*}
    Radial monotonicity therefore implies that there exists $\alpha_K>0$, independent of $n$, such that
    \begin{equation*}
        u_n(x), \widetilde{u}_n(x)\geq\alpha_K\quad\text{for  }x\in K,
    \end{equation*}
    and all sufficiently large $n$.
    On the other hand, the uniform exponential decay yields a constant $\beta_K>0$, independent of $n$, such that
    \begin{equation*}
         u_n(x), \widetilde{u}_n(x)\leq\beta_K\quad\text{for }x\in K.
    \end{equation*}
    Consequently,
    \begin{equation*}
        0<\alpha_K\leq   u_n(x), \widetilde{u}_n(x)\leq \beta_K\quad\text{for }x\in K,
    \end{equation*}
    and for all sufficiently large $n$.
    
    Define 
    \begin{equation*}
        z_n:=\frac{u_n-\widetilde{u}_n}{\Vert u_n-\widetilde{u}_n\Vert_{H^{1/2}(\mathbb{R}^N)}}.
    \end{equation*}
    Then $z_n$ is radial and
    \begin{equation*}
        \Vert z_n\Vert_{H^{1/2}(\mathbb{R}^N)}=1.
    \end{equation*}
    Therefore, up to a subsequence, there exists $z\in H_r^{1/2}(\mathbb{R}^N)$ such that
    \begin{equation*}
        \begin{cases}
            z_n\rightharpoonup z\quad\text{in }H^{1/2}(\mathbb{R}^N),\\
            z_n\to z\quad\text{in }L^p_{loc}(\mathbb{R}^N),\quad 1\leq p<\frac{2N}{N-1},\\
            z_n\to z\quad\text{a.e. in }\mathbb{R}^N.
        \end{cases}
    \end{equation*}
    Define $a_n(x):=\mathcal{D}_f(u_n(x),\widetilde{u}_n(x))$,
    where $\mathcal{D}_f$ is given in \eqref{Df}.
    Subtracting the equations satisfied by $u_n$ and $\widetilde{u}_n$, and then dividing by
    $\Vert u_n-\widetilde{u}_n\Vert_{H^{1/2}}$, gives
    \begin{equation}\label{zc'sequation}
        \left(P_{c_n}(D)+\mu\right)z_n=a_nz_n.
    \end{equation}
    We claim that, for every compact set $K\subset\mathbb{R}^N$,
    \begin{equation}\label{acconvergence}
        a_n\to f'(\mathfrak{g})=\log\mathfrak{g}+1\quad\text{in }L^2(K).
    \end{equation}
    Indeed, we have proved that, for all sufficiently large $n$,
    \begin{equation}\label{localestimate}
        0<\alpha_K\leq u_n, \widetilde{u}_n\leq\beta_K\quad\text{on }K,\quad u_n, \widetilde{u}_n\to\mathfrak{g}\text{ a.e. on }K.
    \end{equation}
    Therefore, \eqref{acconvergence}
    follows directly from Lemma \ref{log-divided-difference} (ii).
    Let $\varphi\in C_c^\infty(\mathbb{R}^N)$ and choose a compact set $K\subset\mathbb{R}^N$ such that $\text{supp}\varphi\subset K$. Testing \eqref{zc'sequation} with $\varphi$, we obtain
    \begin{equation}\label{zc'sintegralequation}
        \int_{\mathbb{R}^N}z_n\left(P_{c_n}(D)+\mu\right)\varphi\,dx=\int_{\mathbb{R}^N}a_nz_n\varphi\,dx.
    \end{equation}
    For every $\varphi\in C_c^\infty(\mathbb{R}^N)$,
    \begin{equation*}
        P_{c_n}(D)\varphi\to-\frac{1}{2m}\Delta\varphi\quad\text{in }L^2(\mathbb{R}^N).
    \end{equation*}
    Thus,
    \begin{equation*}
    \begin{aligned}
        \int_{\mathbb{R}^N}z_nP_{c_n}(D)\varphi\,dx&=\int_{\mathbb{R}^N}z_n\left(P_{c_n}(D)\varphi+\frac{1}{2m}\Delta\varphi\right)\,dx-\frac{1}{2m}\int_{\mathbb{R}^N}z_n\Delta\varphi\,dx\\
        &\to-\frac{1}{2m}\int_{\mathbb{R}^N}z\Delta\varphi\,dx.
    \end{aligned}
    \end{equation*}
    Besides, by \eqref{acconvergence},
    \begin{equation*}
        \left|\int_{\mathbb{R}^N}(a_n-f'(\mathfrak{g}))z_n\varphi\,dx\right|\leq|\varphi|_{L^\infty(K)}|a_n-f'(\mathfrak{g})|_{L^2(K)}|z_n|_{L^2(K)}\to 0.
    \end{equation*}
    Since $f'(\mathfrak{g})$ is bounded on $K$ and $z_n\to z$ in $L^2(K)$, we also have
    \begin{equation*}
        \int_{\mathbb{R}^N}f'(\mathfrak{g})z_n\varphi\,dx\to\int_{\mathbb{R}^N}f'(\mathfrak{g})z\varphi\,dx.
    \end{equation*}
    Passing to the limit in \eqref{zc'sintegralequation}, we conclude that
    \begin{equation*}
       -\frac{1}{2m}\int_{\mathbb{R}^N}z\Delta\varphi\,dx+\mu\int_{\mathbb{R}^N}z\varphi\,dx=\int_{\mathbb{R}^N}f'(\mathfrak{g})z\varphi\,dx.
    \end{equation*}
    Thus,
    \begin{equation}\label{lz=0}
        \mathcal{L}_\infty z=0
    \end{equation}
    in the distributional sense.
    Since
    \begin{equation*}
        \left(\frac{m}{2}|x|^2-\frac{N}{2}-1\right)z\in L_{loc}^2(\mathbb{R}^N),
    \end{equation*}
    local elliptic regularity implies that $z\in H_{loc}^2(\mathbb{R}^N)$.
    
    We next show that
    \begin{equation}\label{zinH1}
        z\in H^1(\mathbb{R}^N)\quad\text{and}\quad |x|z\in L^2(\mathbb{R}^N).
    \end{equation}
    Let $\eta_R\in C_c^\infty(\mathbb{R}^N)$ satisfy
    \begin{equation*}
        0\leq\eta_R\leq1,\quad\eta_R=1\text{ for }|x|\leq R,\quad\eta_R=0\text{ for }|x|\geq 2R,\quad\text{and }|\nabla \eta_R|\leq\frac{C}{R}.
    \end{equation*}
   Testing \eqref{lz=0} with $\eta_R^2z$, we obtain
   \begin{equation*}
        \frac{1}{2m}\int_{\mathbb{R}^N}\eta_R^2|\nabla z|^2\,dx+\frac{1}{m}\int_{\mathbb{R}^N}\eta_Rz\nabla z\cdot\nabla\eta_R\,dx+\frac{m}{2}\int_{\mathbb{R}^N}|x|^2\eta_R^2z^2\,dx=\left(\frac{N}{2}+1\right)\int_{\mathbb{R}^N}\eta_R^2z^2\,dx.
    \end{equation*}
    By Young's inequality,
    \begin{equation*}
        \frac{1}{m}\left|\int_{\mathbb{R}^N}\eta_Rz\nabla z\cdot\nabla\eta_R\,dx\right|\leq\frac{1}{4m}\int_{\mathbb{R}^N}\eta_R^2|\nabla z|^2\,dx+\frac{1}{m}\int_{\mathbb{R}^N}z^2|\nabla\eta_R|^2\,dx.
    \end{equation*}
    Consequently,
    \begin{equation*}
         \frac{1}{4m}\int_{\mathbb{R}^N}\eta_R^2|\nabla z|^2\,dx+\frac{m}{2}\int_{\mathbb{R}^N}|x|^2\eta_R^2z^2\,dx\leq\left(\frac{N}{2}+1+\frac{C^2}{mR^2}\right)|z|_2^2.
    \end{equation*}
    Letting $R\to\infty$, Fatou's Lemma yields that
    \begin{equation*}
        \frac{1}{4m}\int_{\mathbb{R}^N}|\nabla z|^2\,dx+\frac{m}{2}\int_{\mathbb{R}^N}|x|^2z^2\,dx\leq\left(\frac{N}{2}+1\right)|z|_2^2<\infty.
    \end{equation*}
    which implies \eqref{zinH1}.
   Hence $z\in\Sigma_\infty$. The distributional equation gives $\ell_\infty(z,\varphi)=0$ for every $\varphi\in C_c^\infty(\mathbb{R}^N)$. By density and the continuity of $\ell_\infty$, this equality holds for every $\varphi\in\Sigma_\infty$. Thus $z\in\text{Ker}(\mathcal{L}_\infty)$.
    The nondegeneracy of the Gausson $\mathfrak{g}$ yields
    \begin{equation*}
        \text{Ker}(\mathcal{L}_\infty)=\text{span}\left\{\partial_{x_1}\mathfrak{g},\ldots,\partial_{x_N}\mathfrak{g}\right\}.
    \end{equation*}
    Since $z$ is radial, it holds that $z=0$.
    Therefore, $z_n\to0$ in $L_{loc}^2(\mathbb{R}^N)$.
    Using again the uniform exponential decay of $u_n$ and $\widetilde{u}_n$, we can choose $R>0$, independent of $n$, such that
    \begin{equation*}
        0<u_n(x), \widetilde{u}_n(x)\leq e^{-1}\quad\text{for }|x|\geq R.
    \end{equation*}
   Hence, by Lemma \ref{log-divided-difference} (iii), $a_n(x)\leq0$ for $|x|\geq R$.
    On the other hand, applying the local estimate \eqref{localestimate} and using
    Lemma \ref{log-divided-difference} (i), we find a constant $C_R>0$, independent of $n$, such that $a_n(x)\leq C_R$ for $|x|\leq R$.
    Testing \eqref{zc'sequation} with $z_n$, we obtain
    \begin{equation*}
        \left\langle\left(P_{c_n}(D)+\mu\right)z_n,z_n\right\rangle=\int_{\mathbb{R}^N}a_nz_n^2\,dx\leq C_R\int_{B_R}z_n^2\,dx\to0.
    \end{equation*}
     Nevertheless, by \eqref{symbol-coercive},
    \begin{equation*}
        \left\langle\left(P_{c_n}(D)+\mu\right)z_n,z_n\right\rangle\geq C_{m,\mu}\Vert z_n\Vert_{H^{1/2}(\mathbb{R}^N)}^2=C_{m,\mu}>0,
    \end{equation*}
    which leads to a contradiction. Therefore, there exists $c_*\ge c_1$ such that any two nontrivial solutions with action below $2m_c$ coincide up to translations and a change of sign whenever $c\geq c_*$.

    Finally, since every action ground state has action $m_c<2m_c$, any two positive action ground states coincide up to translations. This proves the last assertion and completes the proof.
\end{proof}

We end this section with the proof of Theorem \ref{result5}.
\begin{proof}
Since equation \eqref{problem} is invariant under translations, we first claim that
\begin{equation*}
       \partial_{x_j}u_c\in\Sigma_c,\quad \ell_c(\partial_{x_j}u_c,\psi)=0\quad\text{for every }\psi\in \Sigma_c,\quad j=1,\ldots,N.
    \end{equation*}
Indeed, it follows from Lemma
\ref{H1normboundedness} that $u_c\in H^1(\mathbb{R}^N)$. Moreover,
$u_c\log u_c\in L^2(\mathbb{R}^N)$ by \eqref{H1normbounded1}. For
$j\in\{1,\ldots,N\}$ and $0<|h|\leq1$, set
\begin{equation*}
    v_h(x):=\frac{u_c(x+he_j)-u_c(x)}{h},\quad
    a_h(x):=\mathcal{D}_f(u_c(x+he_j),u_c(x)).
\end{equation*}
Translation invariance of $P_c(D)$ gives
\begin{equation}\label{translation-difference-equation}
    (P_c(D)+\mu)v_h=a_hv_h.
\end{equation}
For each fixed $h$, one has $v_h\in H^1(\mathbb{R}^N)$ and
\begin{equation*}
    a_hv_h=\frac{f(u_c(\cdot+he_j))-f(u_c)}{h}\in L^2(\mathbb{R}^N).
\end{equation*}
Since $u_c\in H^1(\mathbb{R}^N)$, the family $\{v_h\}_{0<|h|\leq1}$ is
bounded in $L^2(\mathbb{R}^N)$ and
$v_h\to\partial_{x_j}u_c$ in $L^2(\mathbb{R}^N)$ as $h\to0$.

On the other hand, by Lemma \ref{uniformexponentialdecay}, we can choose $R>0$ such that
$u_c(x),u_c(x+he_j)\leq e^{-1}$ whenever $|x|\geq R$ and $|h|\leq1$.
Lemma \ref{log-divided-difference} (iii) therefore gives $a_h\leq0$
on $\mathbb{R}^N\setminus B_R$. Since $u_c$ is positive and continuous,
Lemma \ref{log-divided-difference} (i) also yields
$|a_h|\leq C_R$ on $B_R$, uniformly for $|h|\leq1$.
Taking the $L^2$ inner product of \eqref{translation-difference-equation}
with $v_h$ and using \eqref{symbol-coercive}, we obtain
\begin{equation*}
    C_{m,\mu}\Vert v_h\Vert_{H^{1/2}(\mathbb{R}^N)}^2
    \leq \left\langle(P_c(D)+\mu)v_h,v_h\right\rangle
    =\int_{\mathbb{R}^N}a_hv_h^2\,dx
    \leq C_R|v_h|_{L^2(B_R)}^2\leq C.
\end{equation*}
The same identity, together with the nonnegativity of its left-hand side,
gives
\begin{equation}\label{translation-weight-bound}
    \int_{\mathbb{R}^N\setminus B_R}(-a_h)v_h^2\,dx
    =\int_{B_R}a_hv_h^2\,dx
       -\left\langle(P_c(D)+\mu)v_h,v_h\right\rangle
    \leq C.
\end{equation}
After passing to a subsequence as $h\to0$, we have
$v_h\to\partial_{x_j}u_c$ almost everywhere and
$a_h\to1+\log u_c$ almost everywhere. Fatou's lemma applied to
\eqref{translation-weight-bound} yields
\begin{equation*}
    \int_{\mathbb{R}^N\setminus B_R}
       |1+\log u_c|\,|\partial_{x_j}u_c|^2\,dx<\infty.
\end{equation*}
In addition, the uniform $H^{1/2}$ bound implies
$\partial_{x_j}u_c\in H^{1/2}(\mathbb{R}^N)$. Since $\log u_c$ is bounded
on $B_R$, the preceding estimate proves
$\partial_{x_j}u_c\in\Sigma_c$.

For every $\phi\in C_c^\infty(\mathbb{R}^N)$, the positivity and
continuity of $u_c$ imply that $a_h\to1+\log u_c$ uniformly on
$\text{supp}\phi$. Passing to the limit in the weak formulation of
\eqref{translation-difference-equation}, we obtain
$\ell_c(\partial_{x_j}u_c,\phi)=0$. Finally, a standard cutoff-and-mollification argument yields that
$C_c^\infty(\mathbb{R}^N)$ is dense in $\Sigma_c$. It then follows that
\begin{equation*}
    \ell_c(\partial_{x_j}u_c,\psi)=0
    \quad\text{for every }\psi\in\Sigma_c,
    \quad j=1,\ldots,N.
\end{equation*}
Consequently,
\begin{equation}\label{subset}
    \text{span}\{\partial_{x_1}u_c,\ldots,
    \partial_{x_N}u_c\}\subset\text{Ker}(\mathcal{L}_c).
\end{equation}
    It remains to prove the reverse inclusion.
    Suppose by contradiction that there exist $c_n\to\infty$, positive action ground states $u_n:=u_{c_n}$, and nonzero functions
    \begin{equation}\label{L2orth}
       z_n\in\text{Ker}(\mathcal{L}_{c_n})\cap\text{span}\{\partial_{x_1} u_n,\ldots,\partial_{x_N}u_n\}^\perp.
    \end{equation}
    After normalization, we may assume that
    \begin{equation}\label{zn'snormalization}
        |z_n|_2=1,\quad  \left\langle
       z_n,\partial_{x_j}u_n
        \right\rangle=0,
       \quad j=1,\ldots,N.
    \end{equation}
    Since $z_n\in\text{Ker}(\mathcal{L}_{c_n})$, it satisfies
    \begin{equation}\label{zn'sequation}
       \int_{\mathbb{R}^N}(P_{c_n}(\xi)+\mu)\widehat{z_n}(\xi)\overline{\widehat{\psi}(\xi)}\,d\xi=\int_{\mathbb{R}^N}(1+\log u_n)z_n\psi\,dx\quad\text{for every }\psi\in \Sigma_{c_n}. 
    \end{equation}
    By translating $u_n$ and $z_n$ by the same vector, we may assume that each $u_n$ is radial about the origin and strictly decreasing with respect to $|x|$.
    Theorem \ref{result2} then yields, after passing to a subsequence,
    \begin{equation}\label{unconvegeg}
        u_n\to\mathfrak{g}\quad\text{in }H^1(\mathbb{R}^N).
    \end{equation}
    The uniform exponential decay gives a constant $R>0$, independent of $n$, such that
    \begin{equation}\label{un'scontrol1}
        0<u_n(x)\leq e^{-1}\quad\text{for }|x|\ge R.
    \end{equation}
    Then
     \begin{equation*}
         1+\log u_n(x)\leq0
         \quad\text{for }|x|\geq R.
     \end{equation*}
    Besides, the uniform $L^\infty$ estimate for $u_n$ yields a constant $C_R>0$, independent of $n$, such that
    \begin{equation}\label{un'scontrol2}
        1+\log u_n(x)\leq C_R
      \quad\text{for }|x|\leq R.
    \end{equation}
    Testing \eqref{zn'sequation} against $z_n$ and using \eqref{un'scontrol1}--\eqref{un'scontrol2}, we obtain
    \begin{equation}\label{upperbound}
         \left\langle\left(P_{c_n}(D)+\mu\right)z_n,z_n\right\rangle=\int_{\mathbb{R}^N}(1+\log u_n)z_n^2\,dx\leq C_R\int_{B_R}z_n^2\,dx.
    \end{equation}
    On the other hand, the uniform coercivity estimate \eqref{symbol-coercive} implies
    \begin{equation}\label{lowerbound}
        \left\langle\left(P_{c_n}(D)+\mu\right)z_n,z_n\right\rangle\geq C_{m,\mu}\Vert z_n\Vert_{H^{1/2}(\mathbb{R}^N)}^2.
    \end{equation}
    Consequently, $\{z_n\}$ is bounded in $H^{1/2}(\mathbb{R}^N)$. Thus, after passing to a further subsequence, there exists $z\in H^{1/2}(\mathbb {R}^N)$ such that
    \begin{equation*}
        z_n\rightharpoonup z\quad\text{in }H^{1/2}(\mathbb{R}^N),\quad z_n\to z\quad\text{in }L_{loc}^2(\mathbb{R}^N).
    \end{equation*}
    Furthermore, by \eqref{zn'snormalization}, \eqref{upperbound} and \eqref{lowerbound},
    \begin{equation*}
        C_{m,\mu}\leq C_{m,\mu}\Vert z_n\Vert_{H^{1/2}(\mathbb{R}^N)}^2\leq  \left\langle\left(P_{c_n}(D)+\mu\right)z_n,z_n\right\rangle\leq C_R\int_{B_R}z_n^2\,dx.
    \end{equation*}
    Passing to the limit and using the strong convergence in $L^2(B_R)$, we obtain
    \begin{equation}\label{zneq0}
        \int_{B_R}z^2\,dx\geq\frac{C_{m,\mu}}{C_R}>0.
    \end{equation}
    
    We next identify the equation satisfied by $z$. By the local lower-bound argument used in the proof of Theorem \ref{result4}, the radial monotonicity of $u_n$, together with \eqref{unconvegeg}, implies that $u_n$ is uniformly bounded away from zero on every compact subset of $\mathbb R^N$. Hence,
    \begin{equation*}
        1+\log u_n\to 1+\log \mathfrak{g}\quad\text{in }L_{loc}^2(\mathbb{R}^N).
    \end{equation*}
    For every $\varphi\in C_c^\infty(\mathbb R^N)$, one also has
    \begin{equation*}
         P_{c_n}(D)\varphi\to-\frac{1}{2m}\Delta \varphi\quad\text{in }L^2(\mathbb{R}^N).
     \end{equation*}
     Testing \eqref{zn'sequation} against $\varphi$ and passing to the limit, we find
     \begin{equation*}
         -\frac{1}{2m}\int_{\mathbb{R}^N}z\Delta\varphi\,dx+\mu\int_{\mathbb{R}^N}z\varphi\,dx=\int_{\mathbb{R}^N}(1+\log \mathfrak{g})z\varphi\,dx.
     \end{equation*}
     Thus,
     \begin{equation*}
         \mathcal{L}_\infty z=0
     \end{equation*}
     in the distributional sense. The cutoff argument used in the proof of Theorem \ref{result4} also gives $z\in H^1(\mathbb{R}^N)$ and $|x|z\in L^2(\mathbb{R}^N)$.
     Therefore, $z\in\Sigma_\infty$.
     The same density argument as in the proof of Theorem \ref{result4} then shows that $z\in\text{Ker}(\mathcal{L}_\infty)$.
     On the other hand,
     by the nondegeneracy of the Gausson $\mathfrak{g}$,
     \begin{equation*}
    \text{Ker}(\mathcal{L}_\infty)=\text{span}\{\partial_{x_1}\mathfrak{g},\ldots,\partial_{x_N}\mathfrak{g}\}.
\end{equation*}
    Moreover, since $u_n\to\mathfrak{g}$ strongly in $H^1(\mathbb{R}^N)$,
    \begin{equation}\label{partialconvergence}
        \partial_{x_j}u_n\to\partial_{x_j}\mathfrak{g}\quad\text{in }L^2(\mathbb{R}^N),\quad j=1,\ldots,N.
    \end{equation}
    Combining \eqref{L2orth}, \eqref{partialconvergence} and the weak convergence $z_n\rightharpoonup z$ in $H^{1/2}(\mathbb{R}^N)$, we obtain
    \begin{equation*}
        \left\langle z,\partial_{x_j}\mathfrak{g}\right\rangle=\lim_{n\to\infty}  \left\langle z_n,\partial_{x_j}u_n\right\rangle=0,\quad j=1,\ldots, N.
    \end{equation*}
    As a result,
    \begin{equation*}
        z\perp\text{Ker}(\mathcal{L}_\infty).
    \end{equation*}
    It then follows that $z=0$, which contradicts \eqref{zneq0}. Hence the reverse inclusion in \eqref{subset} holds for every sufficiently large $c$. Therefore,
     \begin{equation*}
        \text{Ker}(\mathcal{L}_c)=\text{span}\{\partial_{x_1}u_c,\ldots,\partial_{x_N}u_c\}.
    \end{equation*}
    This completes the proof.
\end{proof}

\section{Energy ground states}
The results established in the preceding sections have been formulated in terms of action ground states at a fixed frequency. We now turn to the corresponding problem of minimizing the Hamiltonian energy under a prescribed-mass constraint. We first derive the corresponding scaling relations and then use them to transfer the preceding existence, qualitative, convergence, uniqueness, and nondegeneracy results to the prescribed-mass setting. 
 
 For an arbitrary frequency $\lambda\in\mathbb{R}$, define
\begin{equation*}
    \mathcal{S}_{c,\lambda}(u):=\mathcal{H}_c(u)+\frac{\lambda}{2}\mathcal{M}(u)
\end{equation*}
and the associated Nehari functional
\begin{equation*}
    \mathcal{K}_{c,\lambda}(u):=\int_{\mathbb{R}^N}\left(P_c(\xi)+\lambda\right)|\widehat{u}(\xi)|^2\,d\xi-\int_{\mathbb{R}^N}u^2\log|u|\,dx.
\end{equation*}
Let
\begin{equation*}
    \mathcal{N}_{c,\lambda}
:=
\left\{
u\in\mathcal{X}\setminus\{0\}:
\mathcal{K}_{c,\lambda}(u)=0
\right\}
\end{equation*}
and set
\begin{equation*}
    m_c(\lambda):=\inf_{u\in\mathcal{N}_{c,\lambda}}\mathcal{S}_{c,\lambda}(u).
\end{equation*}
To connect the frequency-dependent formulation with the action ground states studied above, we first identify its level at the reference frequency $\lambda=\mu$ with the previously established level $m_c$.
For every $u\in\mathcal{N}_{c,\mu}$, its extension $U$ belongs to $\mathcal{N}_c$ and satisfies $I_c(U)=\mathcal{S}_{c,\mu}(u)$. Hence $m_c\leq m_c(\mu)$. Conversely, let $U\in\mathcal{N}_c$ and write $u=U(\cdot,0)$. Lemma \ref{norms} yields
\begin{equation*}
    \mathcal{K}_{c,\mu}(u)\le J_c(U)=0.
\end{equation*}
Thus, with
\begin{equation*}
    \log t=\frac{\mathcal{K}_{c,\mu}(u)}{|u|_2^2}\leq0,
\end{equation*}
we have $tu\in\mathcal{N}_{c,\mu}$ and
\begin{equation*}
    m_c(\mu)
\leq\mathcal{S}_{c,\mu}(tu)
=\frac{t^2}{4}|u|_2^2
\leq\frac{1}{4}|u|_2^2
=I_c(U).
\end{equation*}
Taking the infimum over $U\in\mathcal{N}_c$ gives $m_c(\mu)=m_c$.

\begin{lemma}\label{groundstate-Neharilevel}
  For each $\lambda\in\mathbb{R}$,  every minimizer of $\mathcal{S}_{c,\lambda}$ on $\mathcal{N}_{c,\lambda}$ is a nontrivial weak solution of 
  \begin{equation*}
      (P_c(D)+\lambda)u=u\log|u|.
  \end{equation*}
\end{lemma}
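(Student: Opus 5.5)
The plan is to reduce the statement to the case $\lambda=\mu$ by the amplitude--frequency scaling, and then to use what the proof of Theorem \ref{result1} already gives at the reference frequency. For $\alpha>0$ one has $\log|\alpha u|=\log\alpha+\log|u|$. Hence, for every $u\in\mathcal{X}$,
\begin{equation*}
\mathcal{K}_{c,\lambda}(\alpha u)=\alpha^2\mathcal{K}_{c,\lambda-\log\alpha}(u),\qquad \mathcal{S}_{c,\lambda}(\alpha u)=\alpha^2\mathcal{S}_{c,\lambda-\log\alpha}(u).
\end{equation*}
Choosing $\alpha=e^{\lambda-\mu}$, the map $u\mapsto \alpha u$ is a bijection from $\mathcal{N}_{c,\mu}$ onto $\mathcal{N}_{c,\lambda}$, and it multiplies $\mathcal{S}$ by $\alpha^2$. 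Therefore $w$ minimizes $\mathcal{S}_{c,\lambda}$ on $\mathcal{N}_{c,\lambda}$ if and only if $u=w/\alpha$ minimizes $\mathcal{S}_{c,\mu}$ on $\mathcal{N}_{c,\mu}$. Moreover, $w$ solves $(P_c(D)+\lambda)w=w\log|w|$ if and only if $u$ solves \eqref{problem}. It thus suffices to treat $\lambda=\mu$.

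Now let $u$ minimize $\mathcal{S}_{c,\mu}$ on $\mathcal{N}_{c,\mu}$, and let $U$ be its harmonic extension. Equality holds in Lemma \ref{norms}, so $J_c(U)=\mathcal{K}_{c,\mu}(u)=0$, that is, $U\in\mathcal{N}_c$. We also get $I_c(U)=\mathcal{S}_{c,\mu}(u)=m_c(\mu)=m_c$, using the identity $m_c(\mu)=m_c$ established just before the lemma. So $U$ attains $\inf_{\mathcal{N}_c}I_c$. The proof of Theorem \ref{result1} ends with the conclusion that a function attaining the minimum of $I_c$ on $\mathcal{N}_c$ is a ground state, and this is the point that needs care. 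What we actually need is the direct implication: a Nehari minimizer is a weak solution. This is the main step, and I would prove it with a standard Lagrange-type variational argument that avoids differentiability issues of the logarithm.

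Fix $\varphi\in C_c^\infty(\mathbb{R}^N)$ and consider $u+s\varphi$ for small $|s|$. This function lies in $\mathcal{X}$, because $\varphi$ is bounded with compact support and $t^2|\log|t||$ is locally integrable in any bounded range. Next I would show that
\begin{equation*}
s\mapsto \int_{\mathbb{R}^N}(u+s\varphi)^2\log|u+s\varphi|\,dx
\end{equation*}
is differentiable at $s=0$ with derivative $\int_{\mathbb{R}^N}(2u\log|u|+u)\varphi\,dx$. The difference quotients are supported in $\operatorname{supp}\varphi$ and are dominated there by $C(1+|u|^{q-1})|\varphi|$ for some $q>2$, using $|\frac{d}{dt}(t^2\log|t|)|\le C(1+|t|^{q-1})$ on bounded sets together with the mean value theorem; dominated convergence then applies. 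Hence $\mathcal{S}_{c,\mu}$ and $\mathcal{K}_{c,\mu}$ are Gateaux differentiable along $C_c^\infty$ directions.

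Write $t(s)=\exp\bigl(\mathcal{K}_{c,\mu}(u+s\varphi)/|u+s\varphi|_2^2\bigr)$, so that $t(s)(u+s\varphi)\in\mathcal{N}_{c,\mu}$ by the scaling $\mathcal{K}_{c,\mu}(tv)=t^2(\mathcal{K}_{c,\mu}(v)-\log t\,|v|_2^2)$. Then $t(0)=1$, and $t$ is differentiable at $0$ since $|u|_2\neq0$. Consider
\begin{equation*}
g(s)=\mathcal{S}_{c,\mu}(t(s)(u+s\varphi))=\tfrac14 t(s)^2|u+s\varphi|_2^2.
\end{equation*}
It has a minimum at $s=0$, so $g'(0)=0$, and Lemma \ref{notempty} (i) gives $\mathcal{S}_{c,\mu}(t(s)(u+s\varphi))=\max_{t>0}\mathcal{S}_{c,\mu}(t(u+s\varphi))$. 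Along the fiber, $\partial_t\mathcal{S}_{c,\mu}(tv)|_{t=1}=\mathcal{K}_{c,\mu}(v)$, and this vanishes at $v=u$. The chain rule then reduces $g'(0)$ to $\frac{d}{ds}\mathcal{S}_{c,\mu}(u+s\varphi)|_{s=0}$, which equals
\begin{equation*}
\int_{\mathbb{R}^N}(P_c(\xi)+\mu)\widehat u\,\overline{\widehat\varphi}\,d\xi-\int_{\mathbb{R}^N}u\varphi\log|u|\,dx.
\end{equation*}
This is exactly the weak formulation of \eqref{problem}, so $u$ is a nontrivial weak solution. Scaling back by $\alpha$ gives the claim for every $\lambda$.
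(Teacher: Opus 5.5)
Your proposal is correct and is essentially the paper's argument: you perturb to $u+s\varphi$, project back onto the Nehari set via $\log t(s)=\mathcal{K}_{c,\lambda}(u+s\varphi)/|u+s\varphi|_2^2$, and differentiate $\frac14 t(s)^2|u+s\varphi|_2^2$ at its minimum $s=0$, using the growth bound on $\frac{d}{dt}(t^2\log|t|)$ (valid for all $t$, not only on bounded sets) to differentiate under the integral on $\text{supp}\,\varphi$. The preliminary reduction to $\lambda=\mu$ and the detour through the extension $U$ are unnecessary, because this computation works verbatim for every $\lambda$ (the paper applies it directly), and your fiber/chain-rule evaluation of $g'(0)$ is equivalent to the paper's direct formula $F'(0)=\frac12K'(0)+\frac14M'(0)$.
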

\begin{proof}
    Let $u\in\mathcal{N}_{c,\lambda}$ satisfy $\mathcal{S}_{c,\lambda}(u)=m_c(\lambda)$, and fix $\varphi\in C_c^\infty(\mathbb{R}^N)$. For $s$ sufficiently close to zero, set
    \begin{equation*}
        w_s:=u+s\varphi,\quad M(s):=\int_{\mathbb{R}^N}w_s^2\,dx,\quad K(s):=\mathcal{K}_{c,\lambda}(w_s).
    \end{equation*}
    Then $w_s\in\mathcal{X}\setminus\{0\}$. Moreover, the functions $M$ and $K$ are continuously differentiable near $s=0$. Indeed, the function
    \begin{equation*}
        h(t):=
      \begin{cases}
      t^2\log|t|,&t\neq0,\\
      0,&t=0
      \end{cases}
    \end{equation*}
    belongs to $C^1(\mathbb{R})$, with 
    \begin{equation*}
        h'(t)=2t\log |t|+t, t\neq0,\quad h'(0)=0.
    \end{equation*}
    For any sufficiently small $\epsilon>0$,
    \begin{equation*}
        |h'(t)|\leq c_\epsilon(1+|t|^{1+\epsilon}).
    \end{equation*}
    On the compact support of $\varphi$, this estimate and the Sobolev embedding justify differentiation under the integral sign. Consequently,
    \begin{equation*}
        M'(0)=2\int_{\mathbb{R}^N}u\varphi\,dx,\quad K'(0)=2\int_{\mathbb{R}^N}(P_c(\xi)+\lambda)\widehat{u}(\xi)\overline{\widehat{\varphi}(\xi)}\,d\xi-\int_{\mathbb{R}^N}(2u\log|u|+u)\varphi\,dx.
    \end{equation*}
    Define
    \begin{equation*}
        \log t(s):=\frac{K(s)}{M(s)}.
    \end{equation*}
    By the scaling identity
    \begin{equation*}
        \mathcal{K}_{c,\lambda}(tv)
      =t^2\left(
       \mathcal{K}_{c,\lambda}(v)-\log t\int_{\mathbb R^N}v^2\,dx\right),
    \end{equation*}
    we have $t(s)w_s\in\mathcal{N}_{c,\lambda}$. Moreover, $K(0)=0$, so $t(0)=1$. The minimality of $u$ therefore yields that the function
    \begin{equation*}
        F(s):=\mathcal{S}_{c,\lambda}(t(s)w_s)=\frac{1}{4}t(s)^2M(s)
    \end{equation*}
    has a local minimum at $s=0$. Therefore,
    \begin{equation*}
        0=F'(0)=\frac{1}{2}K'(0)+\frac{1}{4}M'(0)=\int_{\mathbb{R}^N}(P_c(\xi)+\lambda)\widehat{u}(\xi)\overline{\widehat{\varphi}(\xi)}\,d\xi-\int_{\mathbb{R}^N}u\varphi\log|u|\,dx.
    \end{equation*}
    Since $\varphi\in C_c^\infty(\mathbb{R}^N)$ was arbitrary, $u$ is a weak solution of the above equation. Its nontriviality follows from $u\in\mathcal{N}_{c,\lambda}$.
\end{proof}
Let
\begin{equation*}
    \mathcal{G}^a_{c,\lambda}
:=
\left\{
u\in\mathcal{N}_{c,\lambda}:
\mathcal{S}_{c,\lambda}(u)=m_c(\lambda)
\right\}.
\end{equation*}

The preceding lemma and the frequency-scaling argument below show that this is precisely the set of action ground states at frequency $\lambda$.

\begin{lemma}\label{frequencyscaling}
   For every $\lambda,\nu\in\mathbb R$,
    \begin{equation*}
        \mathcal{N}_{c,\lambda}
          =
    e^{\lambda-\nu}\mathcal{N}_{c,\nu}\quad\text{and}\quad    m_c(\lambda)
      =
      e^{2(\lambda-\nu)}m_c(\nu).
    \end{equation*}
    Moreover,
   \begin{equation*}
       \mathcal{G}^{a}_{c,\lambda}
       =
       e^{\lambda-\nu}
       \mathcal{G}^{a}_{c,\nu}.
   \end{equation*}
\end{lemma}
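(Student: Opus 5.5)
The plan is to prove everything from two elementary scaling identities for the dilation $u\mapsto tu$ with $t=e^{\lambda-\nu}$. Fix $v\in\mathcal{X}\setminus\{0\}$ and $t>0$. Clearly $tv\in\mathcal{X}\setminus\{0\}$, and the relation $\log|tv|=\log t+\log|v|$ gives
\begin{equation*}
    \mathcal{K}_{c,\lambda}(tv)=t^2\left(\int_{\mathbb{R}^N}(P_c(\xi)+\lambda-\log t)|\widehat{v}(\xi)|^2\,d\xi-\int_{\mathbb{R}^N}v^2\log|v|\,dx\right)=t^2\,\mathcal{K}_{c,\lambda-\log t}(v).
\end{equation*}
In the same way,
\begin{equation*}
    \mathcal{S}_{c,\lambda}(tv)=t^2\,\mathcal{S}_{c,\lambda-\log t}(v).
\end{equation*}
Both identities are direct computations using Plancherel for the quadratic terms.

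Choosing $\log t=\lambda-\nu$ in the first identity gives $\mathcal{K}_{c,\lambda}(tv)=t^2\mathcal{K}_{c,\nu}(v)$. Hence $v\in\mathcal{N}_{c,\nu}$ if and only if $tv\in\mathcal{N}_{c,\lambda}$, that is, $\mathcal{N}_{c,\lambda}=e^{\lambda-\nu}\mathcal{N}_{c,\nu}$. The map $v\mapsto e^{\lambda-\nu}v$ is therefore a bijection $\mathcal{N}_{c,\nu}\to\mathcal{N}_{c,\lambda}$.

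By the second identity this bijection multiplies the action by the constant $e^{2(\lambda-\nu)}$. Taking infima then yields $m_c(\lambda)=e^{2(\lambda-\nu)}m_c(\nu)$. This is meaningful because $m_c(\nu)$ is finite: on the Nehari set one has $\mathcal{S}_{c,\nu}=\frac14|\cdot|_2^2\ge 0$. As a cross-check, the alternative formula $\mathcal{S}_{c,\lambda}(u)=\frac14|u|_2^2$ on $\mathcal{N}_{c,\lambda}$ gives the same factor $t^2$.

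Since the bijection preserves minimizing the (rescaled) action, it maps the set of minimizers $\mathcal{G}^a_{c,\nu}$ onto $\mathcal{G}^a_{c,\lambda}$, which is the last claim. Because $m_c(\mu)=m_c$ is attained (Theorem~\ref{result1} and the identification preceding Lemma~\ref{groundstate-Neharilevel}), every $m_c(\lambda)$ is attained. Lemma~\ref{groundstate-Neharilevel} then shows these minimizers are genuine weak solutions.

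There is no real obstacle. The only point needing care is that the first identity is an exact identity of functionals on $\mathcal{X}$, including the term involving $\log t$. That term produces exactly $-\log t\,|v|_2^2$, which shifts the frequency rather than breaking the Nehari structure.
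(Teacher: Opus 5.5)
Your proposal is correct and follows essentially the paper's argument. The identity $\mathcal{K}_{c,\lambda}(e^{\lambda-\nu}v)=e^{2(\lambda-\nu)}\mathcal{K}_{c,\nu}(v)$ gives the bijection of Nehari sets, and the action on them is multiplied by $e^{2(\lambda-\nu)}$, so infima and minimizers correspond. The only cosmetic difference is that you derive the action scaling from the general identity $\mathcal{S}_{c,\lambda}(tv)=t^2\mathcal{S}_{c,\lambda-\log t}(v)$ on all of $\mathcal{X}$, whereas the paper uses the Nehari identity $\mathcal{S}=\frac14\mathcal{M}$ (which you also note as a cross-check).
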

\begin{proof}
    Set $a:=e^{\lambda-\nu}$.
    Since $\log|au|=\log a+\log|u|$,
    a direct calculation gives $\mathcal{K}_{c,\lambda}(au)=a^2  \mathcal{K}_{c,\nu}(u)$.
    Hence,
    \begin{equation*}
        u\in\mathcal{N}_{c,\nu}
     \iff
     au\in\mathcal{N}_{c,\lambda},
    \end{equation*}
    and therefore $\mathcal{N}_{c,\lambda}
          =
    a\mathcal{N}_{c,\nu}$.
    For every $u\in\mathcal{N}_{c,\nu}$, the Nehari identity gives
    \begin{equation*}
        \mathcal{S}_{c,\nu}(u)=\frac{1}{4}\mathcal{M}(u).
    \end{equation*}
    Likewise, $au\in\mathcal{N}_{c,\lambda}$, and therefore
    \begin{equation*}
        \mathcal{S}_{c,\lambda}(au)=\frac{1}{4}\mathcal{M}(au)=a^2 \mathcal{S}_{c,\nu}(u).
    \end{equation*}
    Taking infima yields
    \begin{equation*}
        m_c(\lambda)=a^2m_c(\nu)=e^{2(\lambda-\nu)}m_c(\nu).
    \end{equation*}
    Moreover, applying the same argument to minimizers proves $ \mathcal{G}^{a}_{c,\lambda}
       =
       e^{\lambda-\nu}
       \mathcal{G}^{a}_{c,\nu}$.
\end{proof}
The same scaling maps nontrivial weak solutions at frequency $\nu$ bijectively onto those at frequency $\lambda$, and multiplies their action levels by $e^{2(\lambda-\nu)}$. Together with the existence and ground-state characterization at frequency $\mu$, this proves that $\mathcal{G}^a_{c,\lambda}$ is nonempty and consists precisely of all action ground states at frequency $\lambda$.

\begin{proposition}\label{Exactaction-energy equivalence}
    Let $\lambda\in\mathbb{R}$ and set
   \begin{equation*}
       \rho_\lambda:=4m_c(\lambda).
   \end{equation*}
   Then
   \begin{equation*}
       \mathcal G^{e}_{c,\rho_\lambda}
       =
      \mathcal G^{a}_{c,\lambda}.
   \end{equation*}
\end{proposition}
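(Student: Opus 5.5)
The plan is to compare $\mathcal H_c$ on the sphere $\{\mathcal M=\rho_\lambda\}$ with $\mathcal S_{c,\lambda}$ on $\mathcal N_{c,\lambda}$. Two facts about amplitude scaling drive everything. For $u\in\mathcal X\setminus\{0\}$ and $t>0$, the logarithmic scaling gives
\begin{equation*}
\mathcal S_{c,\lambda}(tu)=t^2\Bigl(\mathcal S_{c,\lambda}(u)-\tfrac{\log t}{2}\mathcal M(u)\Bigr),\qquad
\mathcal K_{c,\lambda}(tu)=t^2\bigl(\mathcal K_{c,\lambda}(u)-\log t\,\mathcal M(u)\bigr).
\end{equation*}
The key identity, valid for every $u\in\mathcal X\setminus\{0\}$, is
\begin{equation*}
\mathcal S_{c,\lambda}(u)=\tfrac14\mathcal M(u)+\tfrac12\mathcal K_{c,\lambda}(u),
\end{equation*}
which is immediate from the definitions because the quadratic parts match and $\frac14|u|_2^2$ appears explicitly in $\mathcal H_c$.

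\textbf{Step 1: lower bound $e_c(\rho_\lambda)\ge \frac{\rho_\lambda}{4}-\frac{\lambda\rho_\lambda}{2}$.} Take $u$ with $\mathcal M(u)=\rho:=\rho_\lambda$, and let $t=t_u>0$ be defined by $\log t=\mathcal K_{c,\lambda}(u)/\rho$, so that $tu\in\mathcal N_{c,\lambda}$. Then
\begin{equation*}
m_c(\lambda)\le \mathcal S_{c,\lambda}(tu)=\tfrac{t^2}{4}\rho,
\end{equation*}
hence $t\ge 1$, i.e.\ $\mathcal K_{c,\lambda}(u)\ge 0$. The key identity now gives $\mathcal S_{c,\lambda}(u)\ge \frac\rho4=m_c(\lambda)$. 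Since $\mathcal H_c(u)=\mathcal S_{c,\lambda}(u)-\frac\lambda2\rho$, this yields
\begin{equation*}
e_c(\rho)\ge m_c(\lambda)-\tfrac{\lambda\rho}{2}>-\infty.
\end{equation*}
Equality forces $\mathcal K_{c,\lambda}(u)=0$, i.e.\ $u\in\mathcal N_{c,\lambda}$ and $\mathcal S_{c,\lambda}(u)=m_c(\lambda)$.

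\textbf{Step 2: attainment and the inclusion $\mathcal G^a_{c,\lambda}\subset\mathcal G^e_{c,\rho_\lambda}$.} If $v\in\mathcal G^a_{c,\lambda}$, then $\mathcal M(v)=4\mathcal S_{c,\lambda}(v)=4m_c(\lambda)=\rho$, so $v$ lies on the sphere and $\mathcal H_c(v)=m_c(\lambda)-\frac{\lambda\rho}2$. This meets the lower bound, so $v\in\mathcal G^e_{c,\rho}$. Since $\mathcal G^a_{c,\lambda}\neq\emptyset$ (by Lemma \ref{frequencyscaling} together with Theorem \ref{result1}), $e_c(\rho)$ is attained and equals $m_c(\lambda)-\frac{\lambda\rho}{2}$.

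\textbf{Step 3: the reverse inclusion.} Let $u\in\mathcal G^e_{c,\rho}$. Then $\mathcal H_c(u)=e_c(\rho)$ is exactly the equality case of Step 1, so $u\in\mathcal N_{c,\lambda}$ with $\mathcal S_{c,\lambda}(u)=m_c(\lambda)$. Hence $u\in\mathcal G^a_{c,\lambda}$, and by Lemma \ref{groundstate-Neharilevel} it is moreover a solution.

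\textbf{Main obstacle.} The delicate point is justifying $m_c(\lambda)>0$ and the well-definedness of the Nehari projection for every $u\in\mathcal X\setminus\{0\}$. The projection is explicit and only needs $\mathcal K_{c,\lambda}(u)$ to be finite, which holds because $u\in\mathcal X$ guarantees $u^2\log|u|\in L^1$. Positivity follows from Lemma \ref{frequencyscaling} together with $m_c=m_c(\mu)>0$, which is guaranteed by Theorem \ref{result1}: that theorem shows $m_c$ is attained by a nontrivial $U_0$, and $m_c=\frac14|u_0|_2^2>0$.
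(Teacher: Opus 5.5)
Your proposal is correct and follows the paper's proof. Both arguments rest on the identity $\mathcal S_{c,\lambda}=\frac12\mathcal K_{c,\lambda}+\frac14\mathcal M$ and on the explicit Nehari projection $\log t=\mathcal K_{c,\lambda}(v)/\mathcal M(v)$; the only difference is that you phrase the first inclusion as a direct lower bound with an equality case, while the paper argues by contradiction. Your explicit check that $m_c(\lambda)>0$, which is needed to pass from $m_c(\lambda)\le \frac{t^2}{4}\rho_\lambda$ to $t\ge 1$, is a point the paper uses only tacitly.
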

\begin{proof}
    Let $u\in \mathcal G^{a}_{c,\lambda}$. The Nehari identity gives 
    \begin{equation*}
        \mathcal{M}(u)=4\mathcal{S}_{c,\lambda}(u)=4m_c(\lambda)=\rho_\lambda.
    \end{equation*}
    If some $v\in\mathcal{X}$ with $\mathcal{M}(v)=\rho_\lambda$ satisfied $\mathcal{H}_c(v)<\mathcal{H}_c(u)$, then
    \begin{equation*}
        \mathcal{S}_{c,\lambda}(v)< \mathcal{S}_{c,\lambda}(u)=\frac{1}{4}\rho_\lambda.
    \end{equation*}
    Since
    \begin{equation*}
         \mathcal{S}_{c,\lambda}(w)=\frac{1}{2}\mathcal{K}_{c,\lambda}(w)+\frac{1}{4}\mathcal{M}(w)
    \end{equation*}
    for every $w\in\mathcal{X}$, this implies $ \mathcal{K}_{c,\lambda}(v)<0$.
    The unique $t>0$ satisfying $tv\in\mathcal{N}_{c,\lambda}$ is determined by
    \begin{equation*}
        \log t=\frac{\mathcal{K}_{c,\lambda}(v)}{\mathcal{M}(v)}.
    \end{equation*}
    Hence $t<1$. Moreover,
    \begin{equation*}
        m_c(\lambda)\leq \mathcal{S}_{c,\lambda}(tv)=\frac{t^2}{4}\rho_\lambda<\frac{1}{4}\rho_\lambda=m_c(\lambda),
    \end{equation*}
    a contradiction. Thus $u$ is an energy ground state of mass $\rho_\lambda$.

    Conversely, let $v\in\mathcal{G}^e_{c,\rho_\lambda}$ and choose $u\in\mathcal{G}^a_{c,\lambda}$. The first part shows that $u$ is an energy minimizer on the same mass sphere, so $\mathcal{H}_c(v)=\mathcal{H}_c(u)$. 
    Therefore,
    \begin{equation*}
        \mathcal{S}_{c,\lambda}(v)
         =
        \mathcal{S}_{c,\lambda}(u)
         =
        m_c(\lambda)=\frac{1}{4}\mathcal{M}(v),
    \end{equation*}
    which implies $ \mathcal{K}_{c,\lambda}(v)=0$.
    Hence $v\in\mathcal{N}_{c,\lambda}$ and attains $m_c(\lambda)$, proving $v\in\mathcal{G}_{c,\lambda}^a$.
\end{proof}

We are now in a position to prove Theorems \ref{result6}--\ref{result8}.
\begin{proof}
    Fix $\rho>0$, and define
    \begin{equation*}
        \alpha_{c,\rho}:=\left(\frac{\rho}{4m_c}\right)^\frac{1}{2},\quad \lambda_{c,\rho}:=\mu+\log \alpha_{c,\rho}.
    \end{equation*}
    By Lemma \ref{frequencyscaling},
    \begin{equation*}
        m_c(\lambda_{c,\rho})=\alpha_{c,\rho}^2m_c=\frac{\rho}{4},
    \end{equation*}
    and hence $\rho=4m_c(\lambda_{c,\rho})$.
    Proposition \ref{Exactaction-energy equivalence}, followed once more by Lemma \ref{frequencyscaling}, therefore gives
    \begin{equation*}
       \mathcal G^{e}_{c,\rho}
       =
      \mathcal G^{a}_{c,\lambda_{c,\rho}}=\alpha_{c,\rho} \mathcal G^{a}_{c,\mu}.
   \end{equation*}
   This proves the existence of energy ground states and immediately transfers the sign, symmetry, radial monotonicity, regularity and decay properties from Theorems \ref{result1} and \ref{result3}.

   We next establish the nonrelativistic limit. Let $u_{c,\rho}\in\mathcal{G}^e_{c,\rho}$ be any positive energy ground state. By the preceding correspondence,
   \begin{equation*}
       u_c:=\alpha_{c,\rho}^{-1}u_{c,\rho}\in\mathcal{G}_{c,\mu}^a.
   \end{equation*}
   After suitable translations, we may assume that $u_c$ is centered at the origin.
   By Theorem \ref{result2},
   \begin{equation*}
       u_c\to\mathfrak{g}_\mu\quad\text{in }H^1(\mathbb{R}^N),\quad \mathfrak{g}_\mu(x):=e^{\mu+\frac{N}{2}}e^{-\frac{m}{2}|x|^2}.
   \end{equation*}
   Since every action ground state satisfies
   $4m_c=\mathcal{M}(u_c)$,
   we have
   \begin{equation*}
       4m_c\to \mathcal{M}(\mathfrak{g}_\mu)=e^{2\mu+N}\left(\frac{\pi}{m}\right)^\frac{N}{2}.
   \end{equation*}
   Consequently,
   \begin{equation*}
       \alpha_{c,\rho}\to\frac{\rho^\frac{1}{2}}{|\mathfrak{g}_\mu|_2}.
   \end{equation*}
   It then follows that
   \begin{equation*}
       \alpha_{c,\rho}u_c\to\frac{\rho^\frac{1}{2}}{|\mathfrak{g}_\mu|_2}\mathfrak{g}_\mu=:\mathfrak{g}_\rho\quad\text{in }H^1(\mathbb{R}^N).
   \end{equation*}
   A direct computation gives
   \begin{equation*}
       \mathfrak{g}_\rho(x)=\rho^\frac{1}{2}\left(\frac{m}{\pi}\right)^\frac{N}{4}e^{-\frac{m}{2}|x|^2},\quad  |\mathfrak g_\rho|_2^2=\rho,
   \end{equation*}
   so $\mathfrak g_\rho$ is precisely the Gausson with the prescribed mass $\rho$.
   Moreover,
   \begin{equation*}
       \lambda_{c,\rho}=\mu+\frac{1}{2}\log\left(\frac{\rho}{4m_c}\right)\to\lambda_{\infty,\rho}:=\frac{1}{2}\log\rho+\frac{N}{4}\log\left(\frac{m}{\pi}\right)-\frac{N}{2}.
   \end{equation*}
   Since $u_{c,\rho}\in\mathcal N_{c,\lambda_{c,\rho}}$, the Nehari identity gives
    \begin{equation*}
        \mathcal{S}_{c,\lambda_{c,\rho}}(u_{c,\rho})
        =
        \frac{1}{4}\mathcal{M}(u_{c,\rho})
        =
        \frac{\rho}{4}.
    \end{equation*}
    On the other hand,
    \begin{equation*}
        \mathcal{S}_{c,\lambda_{c,\rho}}(u_{c,\rho})
        =
       \mathcal{H}_c(u_{c,\rho})
        +
        \frac{\lambda_{c,\rho}}{2}\rho.
    \end{equation*}
    Therefore, the energy ground-state level satisfies
    \begin{equation*}
        e_c(\rho)
        =
       \frac{\rho}{4}
        -\frac{\rho}{2}\lambda_{c,\rho}=
       \frac{\rho}{4}
       -\frac{\mu\rho}{2}
       -\frac{\rho}{4}
        \log\left(\frac{\rho}{4m_c}\right).
    \end{equation*}
    Furthermore,
    \begin{equation*}
        e_c(\rho)
      \to
      e_\infty(\rho)= \frac{\rho}{4}
      -
       \frac{\rho}{2}
      \lambda_{\infty,\rho}= \frac{\rho}{4}
      \left(N+1-\log\rho-\frac{N}{2}\log\left(\frac{m}{\pi}\right)\right).
    \end{equation*}
   For sufficiently large $c$, Theorem \ref{result4} gives uniqueness of $\mathcal{G}^a_{c,\mu}$ up to translations and a change of sign. The correspondence therefore gives the same uniqueness result for $\mathcal{G}^e_{c,\rho}$.

   It remains to prove the nondegeneracy. Set
   $u_{c,\rho}:=\alpha_{c,\rho}u_c$.
   Since
   \begin{equation*}
       \lambda_{c,\rho}
       =
      \mu+\log\alpha_{c,\rho},\quad  \log u_{c,\rho}
     =
     \log\alpha_{c,\rho}+\log u_c,
   \end{equation*}
   we find that $\Sigma_{c,\rho}=\Sigma_c.$
   Indeed, since $\alpha_{c,\rho}>0$, for every $\varphi\in H^{1/2}(\mathbb{R}^N)$,
   \begin{equation*}
       |\log u_{c,\rho}|\varphi^2\in L^1(\mathbb{R}^N)\iff |\log u_c|\varphi^2\in L^1(\mathbb{R}^N).
   \end{equation*}
   Moreover, for every $\varphi,\psi\in \Sigma_{c,\rho}$,
  \begin{equation}\label{Lcrho=Lc}
      \begin{aligned}
           \ell_{c,\rho}(\varphi,\psi)&=\int_{\mathbb{R}^N}P_c(\xi)\widehat{\varphi}(\xi)\overline{\widehat{\psi}(\xi)}\,d\xi+\int_{\mathbb{R}^N}(\lambda_{c,\rho}-1-\log u_{c,\rho})\varphi\psi\,dx\\
           &=\int_{\mathbb{R}^N}P_c(\xi)\widehat{\varphi}(\xi)\overline{\widehat{\psi}(\xi)}\,d\xi+\int_{\mathbb{R}^N}(\mu-1-\log u_c)\varphi\psi\,dx\\
           &=\ell_c(\varphi,\psi).
      \end{aligned}
  \end{equation}
   We first prove
   \begin{equation*}
       \text{Ker}(\mathcal{L}_{c,\rho})\subset\text{span}\left\{\partial_{x_1}u_{c,\rho},\ldots,\partial_{x_N}u_{c,\rho}\right\}.
   \end{equation*}
   Let $\varphi\in\text{Ker}(\mathcal{L}_{c,\rho})$. Then $\varphi\in  T_{u_{c,\rho}}S_\rho$ and $\ell_{c,\rho}(\varphi,\psi)=0$ for every $\psi\in T_{u_{c,\rho}}S_\rho$.
   Since $u_{c,\rho}$ satisfies
   \begin{equation*}
       (P_c(D)+\lambda_{c,\rho})u_{c,\rho}=u_{c,\rho}\log u_{c,\rho},
   \end{equation*}
   we have, for every $\varphi\in\Sigma_{c,\rho}$,
   \begin{equation*}
   \begin{aligned}
       \ell_{c,\rho}(\varphi,u_{c,\rho})&=\int_{\mathbb{R}^N}P_c(\xi)\widehat{\varphi}(\xi)\overline{\widehat{u_{c,\rho}}(\xi)}\,d\xi+\int_{\mathbb{R}^N}(\lambda_{c,\rho}-1-\log u_{c,\rho})\varphi u_{c,\rho}\,dx\\
       &=-\int_{\mathbb{R}^N}u_{c,\rho}\varphi\,dx.
   \end{aligned}
   \end{equation*}
   Since $\varphi\in T_{u_{c,\rho}}S_\rho$, it follows that $\ell_{c,\rho}(\varphi,u_{c,\rho})=0$.
   Now let $\psi\in\Sigma_{c,\rho}$ be arbitrary. Set
   \begin{equation*}
       \psi_0:=\psi-\frac{\langle\psi,u_{c,\rho}\rangle}{\rho}u_{c,\rho}.
   \end{equation*}
   Since $|u_{c,\rho}|_2^2=\rho$, we have $ \langle\psi_0, u_{c,\rho}\rangle=0$,
   and hence $\psi_0\in T_{u_{c,\rho}}S_\rho$. Moreover,
   \begin{equation*}
       \ell_c(\varphi,\psi)=\ell_{c,\rho}(\varphi,\psi)=\ell_{c,\rho}(\varphi,\psi_0)+\frac{\langle\psi,u_{c,\rho}\rangle}{\rho}\ell_{c,\rho}(\varphi,u_{c,\rho})=0.
   \end{equation*}
   Since $\psi\in\Sigma_c$ was arbitrary, we conclude that $\varphi\in\text{Ker}(\mathcal{L}_c)$. Theorem \ref{result5} then yields
   \begin{equation*}
       \varphi\in\text{span}\left\{\partial_{x_1}u_c,\ldots,\partial_{x_N}u_c\right\}.
   \end{equation*}
   Because $u_{c,\rho}=\alpha_{c,\rho}u_c$, we have $\partial_{x_j}u_{c,\rho}=\alpha _{c,\rho}\partial_{x_j}u_c, j=1,\ldots, N$. Thus,
   \begin{equation*}
       \varphi\in\text{span}\left\{\partial_{x_1}u_{c,\rho},\ldots,\partial_{x_N}u_{c,\rho}\right\}.
   \end{equation*}
   We now prove the reverse inclusion. Theorem \ref{result5} gives $\partial_{x_j}u_c\in\text{Ker}(\mathcal{L}_c), j=1,\ldots, N$. Hence,
   \begin{equation*}
       \ell_c(\partial_{x_j}u_c,\psi)=0\quad\text{for every }\psi\in\Sigma_c.
   \end{equation*}
   Using \eqref{Lcrho=Lc} and $\partial_{x_j}u_{c,\rho}=\alpha_{c,\rho}\partial_{x_j}u_c$, we obtain
   \begin{equation}\label{partialucrho}
       \ell_{c,\rho}(\partial_{x_j}u_{c,\rho},\psi)=0\quad\text{for every }\psi\in\Sigma_{c,\rho}.
   \end{equation}
   In particular, \eqref{partialucrho} holds for every $\psi\in T_{u_{c,\rho}}S_\rho$.
   Furthermore, using the decay of $u_{c,\rho}$, we have
   \begin{equation*}
       \int_{\mathbb{R}^N}u_{c,\rho}\partial_{x_j}u_{c,\rho}\,dx=\frac{1}{2}\int_{\mathbb{R}^N}\partial_{x_j}(u_{c,\rho}^2)\,dx=0,
   \end{equation*}
   which implies that $\partial_{x_j}u_{c,\rho}\in T_{u_{c,\rho}}S_\rho$, and consequently,
   \begin{equation*}
       \partial_{x_j}u_{c,\rho}\in\text{Ker}(\mathcal{L}_{c,\rho}),\quad j=1,\ldots, N.
   \end{equation*}
   As a result,
   \begin{equation*}
       \text{Ker}(\mathcal{L}_{c,\rho})=\text{span}\left\{\partial_{x_1}u_{c,\rho},\ldots,\partial_{x_N}u_{c,\rho}\right\}.
   \end{equation*}
This proves the nondegeneracy assertion and completes the proof.
\end{proof}

\noindent{\bf Data availability.}	The manuscript has no associated data.

\noindent{\bf Conflict of interest.}	The authors declare that they have no conflict of interest.

	\bibliographystyle{abbrv}
	\bibliography{reference}
\end{document}